\newtheorem{thm}{Theorem}[section]
\newtheorem{cor}[thm]{Corollary}
\newtheorem{prop}[thm]{Proposition}
\newtheorem{lem}[thm]{Lemma}
\theoremstyle{definition}
\newtheorem{rem}[thm]{Remark}
\numberwithin{equation}{section}
\newcommand{\pt}{{x}}
\newcommand{\ptx}{{x}}
\newcommand{\ulb}{{\textup{(}}}
\newcommand{\urb}{{\textup{)}}}
\newcommand{\T}{{\mathbb{T}}}
\newcommand{\C}{{\mathbb{C}}}
\newcommand{\Z}{{\mathbb{Z}}}
\newcommand{\Ccross}{\C^\times}
\newcommand{\Eone}{{E_1}}
\newcommand{\Estar}{{E_*}}
\newcommand{\Eprimestar}{{E^\prime_*}}
\newcommand{\Fprimestar}{{F^\prime_*}}
\newcommand{\Eprimeone}{{E^\prime_1}}
\newcommand{\Fprimeone}{{F^\prime_1}}
\newcommand{\interior}{{\textup{int}}}
\newcommand{\intsymbol}{{\circ}}
\newcommand{\cesaro}{\sigma^C}
\newcommand{\homeo}{{\sigma}}
\newcommand{\ch}{{\omega}}
\newcommand{\st}{{\phi}}
\newcommand{\surjmap}{{\psi}}
\newcommand{\rest}{{\restriction}}
\newcommand{\topspace}{{X}}
\newcommand{\dynsys}{(\topspace, \homeo)}
\newcommand{\dynsysshort}{{\Sigma}}
\newcommand{\lone}{{\ell^1(\dynsysshort)}}
\newcommand{\cstar}{{C^*(\dynsysshort)}}
\newcommand{\fs}{{c_{00}(\dynsysshort)}}
\newcommand{\coeffalg}{{C(\topspace)}}
\newcommand{\loneposcone}{{\lone_+}}
\newcommand{\cstarcomm}{{\coeffalg^\prime_\ast}}
\newcommand{\lonecomm}{{\coeffalg_1^\prime}}
\newcommand{\fscomm}{{\coeffalg_{00}^\prime}}
\newcommand{\maxidealspacelonecomm}{{\Delta(\lonecomm)}}
\newcommand{\maxidealspacecstarcomm}{{\Delta(\cstarcomm)}}
\newcommand{\per}{{\textup{Per}}}
\newcommand{\fix}{{\textup{Fix}}}
\newcommand{\fixk}{{{\fix}_k(\homeo)}}
\newcommand{\fixl}{{{\fix}_l(\homeo)}}
\newcommand{\fixm}{{{\fix}_m(\homeo)}}
\newcommand{\fixn}{{{\fix}_n(\homeo)}}
\newcommand{\fixq}{{{\fix}_q(\homeo)}}
\newcommand{\fixs}{{{\fix}_s(\homeo)}}
\newcommand{\perk}{{{\per}_k(\homeo)}}
\newcommand{\perq}{{{\per}_q(\homeo)}}
\newcommand{\perpnew}{{{\per}_p(\homeo)}}
\newcommand{\fixkint}{{{\fix}_k^\intsymbol (\homeo)}}
\newcommand{\fixlint}{{{\fix}_l^\intsymbol (\homeo)}}
\newcommand{\fixnint}{{{\fix}_n^\intsymbol (\homeo)}}
\newcommand{\fixqint}{{{\fix}_q^\intsymbol (\homeo)}}
\newcommand{\fixsint}{{{\fix}_s^\intsymbol (\homeo)}}
\newcommand{\pernint}{{{\per}_n^\intsymbol (\homeo)}}
\newcommand{\perqint}{{{\per}_q^\intsymbol (\homeo)}}
\newcommand{\perpnewint}{{{\per}_p^\intsymbol (\homeo)}}
\newcommand{\aperpoints}{{\textup{Aper}(\homeo)}}
\newcommand{\perpoints}{{\per(\homeo)}}
\newcommand{\intfixedpoints}{{\bigcup_{q\geq 1}\fixqint}}
\newcommand{\intperpoints}{{\bigcup_{q\geq 1}\perqint}}
\newcommand{\supp}{{\textup{supp\,}}}
\newcommand{\ev}{\textup{ev}}
\begin{document}

\title[Maximal abelian subalgebras and projections]{Maximal abelian subalgebras and projections in two Banach algebras associated with a topological dynamical system}

\author[M.\ de Jeu]{Marcel de Jeu}
\address{Marcel de Jeu, Mathematical Institute, Leiden University, P.O.\ Box 9512, 2300 RA Leiden, The Netherlands}
\email{mdejeu@math.leidenuniv.nl}

\author[J.\ Tomiyama]{Jun Tomiyama}
\address{Jun Tomiyama, Department of Mathematics, Tokyo Metropolitan University, Minami-Osawa, Hachioji City, Japan}
\email{juntomi@med.email.ne.jp}

\date{}

\begin{abstract}
If $\dynsysshort=\dynsys$ is a topological dynamical system, where $\topspace$ is a compact Hausdorff space and $\homeo$ is a homeomorphism of $\topspace$, then a crossed product Banach $\sp{\ast}$-algebra $\lone$ is naturally associated with these data. If $X$ consists of one point, then $\ell^1(\dynsysshort)$ is the group algebra of the integers. The commutant $\lonecomm$ of $\coeffalg$ in $\lone$ is known to be a maximal abelian subalgebra which has non-zero intersection with each non-zero closed ideal, and the same holds for the commutant $\cstarcomm$ of $\coeffalg$ in $\cstar$, the enveloping $C^*$-algebra of $\lone$. This intersection property has proven to be a valuable tool in investigating these algebras. Motivated by this pivotal role, we study $\lonecomm$ and $\cstarcomm$ in detail in the present paper. The maximal ideal space of $\lonecomm$ is described explicitly, and is seen to coincide with its pure state space and to be a topological quotient of $\topspace\times\T$. We show that $\lonecomm$  is hermitian and semisimple, and that its enveloping $C^*$-algebra is $\cstarcomm$. Furthermore, we establish necessary and sufficient conditions for projections onto $\lonecomm$ and $\cstarcomm$ to exist, and give explicit formulas for such projections, which we show to be unique. In the appendix, topological results for the periodic points of a homeomorphism of a locally compact Hausdorff space are given.
\end{abstract}

\subjclass[2010]{Primary 46K05; Secondary 47L65, 46L55}

\keywords{Involutive Banach algebra, crossed product, maximal abelian subalgebra, pure state space, norm one projection, positive projection, topological dynamical system}

\maketitle

\section{Introduction}\label{s:introduction}

Let $\dynsysshort=(\topspace,\homeo)$ be a topological dynamical system, where $\topspace$ is a compact Hausdorff space, and $\homeo$ is a homeomorphism of $\topspace$. Via $\homeo$, the integers $\Z$ act on $\coeffalg$, and this action induces a twisted convolution on $\ell^1(\Z,\coeffalg)$, thus obtaining a unital involutive Banach algebra of crossed product type, denoted by $\lone$, and with enveloping $C^*$-algebra $\cstar$. The relation between the structure of $\cstar$ and the dynamics of $\dynsys$ is rather well studied (see, e.g., \cite{tomiyama_book, tomiyama_notes_one, tomiyama_notes_two} for an introduction), but more recently the algebra $\lone$ itself, which is the involutive Banach algebra most naturally associated with $\dynsysshort$, has also been investigated in \cite{de_jeu_svensson_tomiyama}. As it turns out, several results which are known for $\cstar$ have an analogue for $\lone$. For example, both $\lone$ and $\cstar$ have only trivial closed ideals, or only trivial self-adjoint closed ideals, precisely when $\topspace$ has an infinite number of points and $\dynsysshort$ is a minimal dynamical system. As a further example, both $\lone$ and $\cstar$ are prime, precisely when $\topspace$ has an infinite number of points and $\dynsysshort$ is topologically transitive. In spite of the formal similarity of such results, the basic underlying proofs for $\lone$ in \cite{de_jeu_svensson_tomiyama} are quite different from the known proofs for $\cstar$. There are, in fact, also differences at a structural level. For example, all closed ideals of $\cstar$ are naturally self-adjoint, but for $\lone$ this need not be so: all closed ideals of $\lone$ are self-adjoint precisely when $\dynsysshort$ is free. Such differences are only to be expected: if, for example, $\topspace$ consists of one point, then $\lone=\ell^1(\Z)$, and $\cstar=C(\T)$, and certainly $\ell^1(\Z)$ has a rather different, more complicated, structure than $C(\T)$.

The algebra $\coeffalg$ is embedded in both $\lone$ and $\cstar$. It was observed in \cite{svensson_tomiyama} that its commutant $\cstarcomm$ in $\cstar$ is commutative again, hence a maximal abelian subalgebra. Moreover, $\cstarcomm$ always has non-zero intersection with every non-zero closed ideal\footnote{In fact: with every non-zero ideal, not necessarily closed or self-adjoint.} in $\cstar$, regardless of the dynamics. This rather non-obvious fact provides a new angle on $\cstar$, and in \cite{svensson_tomiyama} this property was put to good use in the study of this $C^*$-algebra. Likewise, the commutant $\lonecomm$ of $\coeffalg$ in $\lone$ is also a maximal abelian subalgebra, and in \cite{de_jeu_svensson_tomiyama} it was established with some effort that, analogously, $\lonecomm$ has non-zero intersection with every non-zero closed ideal of $\lone$. As for $\cstar$, this proved to be a powerful tool to obtain further structural results about $\lone$. In fact, once this basic property for each of $\cstarcomm$ and $\lonecomm$ has been settled, proofs of further results for $\cstar$ and $\lone$ can be given that are rather similar. The proofs of this property for $\cstarcomm$ and $\lonecomm$, however, are not similar.

In view of the pivotal role of $\lonecomm$ and $\cstarcomm$, it is desirable to have more information on these algebras available, and hence they are the objects of study in the present paper.  We explicitly determine the maximal ideal space of $\lonecomm$, show that is a topological quotient of $\topspace\times\T$, and establish that it coincides with the pure state space of $\lonecomm$. As a consequence, $\lonecomm$ is a hermitian Banach algebra, and it is also shown to be semisimple. Its enveloping $C^*$-algebra turns out to be $\cstarcomm$. With that material in place, it is then possible to investigate projections from $\cstar$ onto $\cstarcomm$ and from $\lone$ onto $\lonecomm$, and we obtain a simple topological dynamical condition which is equivalent to the existence of such projections. We show, for both algebras, that positive projections are unique, if existing, and give an explicit formula, cf.\ Theorems~\ref{t:cstar_projection_unique} and~\ref{t:lone_projection_unique}. For $\cstar$, norm one projections are necessarily positive, so that norm one projections are then also unique. It is conceivable that the ``actual" condition for uniqueness is positivity, and that the norm one case can be covered for $\cstar$ only coincidentally, because the projection theorem for $C^*$-algebras happens to show that norm one implies positivity. At the moment this is not clear, and the uniqueness issue for norm one projections from $\lone$ onto $\lonecomm$ needs further study.

In spite of the progress made in \cite{de_jeu_svensson_tomiyama} and the present paper, there are still several intriguing natural questions open about the involutive algebra $\lone$ itself, and about the relation with its enveloping $C^*$-algebra.  For example, is $\lone$ always hermitian? If $\topspace$ consists of finitely many points, then this is the case (cf.\ \cite{de_jeu_svensson_tomiyama}). Furthermore, if $\lone$ is hermitian, then by \cite[Theorem~9.8.2]{palmer_two} every maximal abelian $\sp{*}$-subalgebra is also hermitian. Hence the fact that, from the present paper, the maximal abelian $\sp*$-subalgebra $\lonecomm$ is known to be hermitian for general $X$ can be regarded as additional support for a possible hypothesis that $\lone$ is always hermitian, but presently the general answer is still unknown. Furthermore, if $I$ is a proper closed ideal of $\lone$, is the closure of $I$ in $\cstar$ then proper again? If so, this would enable us to translate results between $\lone$ and $\cstar$, but at this moment the answer is known (and affirmative) only when $\topspace$ consists of one point. We hope that the results in this paper are a further step towards answering such questions about these involutive Banach algebras, of which $\ell^1$ is the easiest example, and which are the involutive Banach algebras of crossed product type most naturally associated with a single homeomorphism of a compact Hausdorff space.

\medskip
This paper is organised as follows.

Section~\ref{s:preliminaries} contains the basic definitions, notations, and preliminary results.

Section~\ref{s:maxidealspace_lone} is concerned with $\lonecomm$, the commutant of $\coeffalg$ in $\lone$. We start by studying its maximal ideal space $\maxidealspacelonecomm$. All characters can be described explicitly, and it follows by inspection that they are all hermitian, so that $\lonecomm$ is a hermitian algebra. Given the description of $\maxidealspacelonecomm$ it is then straightforward to show that $\lonecomm$ is semisimple, and that $\maxidealspacelonecomm$ is a topological quotient of $\topspace\times\T$. The latter fact can be used to generate dense subsets of $\maxidealspacelonecomm$, which will then separate the points of $\lonecomm$ as a consequence of the semisimplicity.

In Section~\ref{s:extending_and_restricting}, $\cstarcomm$ is brought into play, and we study how pure states extend and restrict in the chain $\coeffalg\hookrightarrow\lonecomm\hookrightarrow\cstarcomm\hookrightarrow\cstar$ of unital involutive Banach algebras. The maximal ideal space $\maxidealspacecstarcomm$ of $\cstarcomm$ can then be shown to be homeomorphic with $\maxidealspacelonecomm$.

Section~\ref{s:enveloping_algebra_and_projections} contains the applications of the results in Sections~\ref{s:maxidealspace_lone} and~\ref{s:extending_and_restricting}. It is shown that $\cstarcomm$ is the enveloping $C^*$-algebra of $\lonecomm$, which, in retrospect, explains the homeomorphism between $\maxidealspacelonecomm$ and $\maxidealspacecstarcomm$. The rest of the section consists of a detailed study of projections from $\cstar$ onto $\cstarcomm$, and from $\lone$ onto $\lonecomm$.

In the Appendix we have included, all in one place, topological results on the periodic points that have proved to be useful in various papers. Our results, valid for a homeomorphism of a locally compact Hausdorff space, are actually somewhat stronger than in the existing literature and have a common basis in a new general result on equal closures.

\section{Preliminaries}\label{s:preliminaries}

Throughout this paper, with the exception of the Appendix, $\topspace$ is a non-empty compact Hausdorff space, and $\homeo:\topspace\to\topspace$ is a homeomorphism. We let $\aperpoints$ and $\perpoints$ denote the aperiodic and the periodic points of $\homeo$, respectively. For $n\in\Z$, let $\fixn$ be the set of points fixed by $\homeo^n$, and, for $p\geq 1$, let $\perpnew$ be the set of points of period $p$. If $S\subset\topspace$, then we will denote its interior and closure by $S^\intsymbol$ and $\overline{S}$, respectively. For typographical reasons, we will write $\fixnint$ and $\pernint$, rather than $\fix_n(\homeo)^\intsymbol$ and $\per_n(\homeo)^\intsymbol$.

We let $\coeffalg$ denote the algebra of continuous complex-valued functions on $\topspace$, and write $\alpha$ for be the automorphism of $\coeffalg$ induced by $\homeo$ via $\alpha(f) := f \circ \homeo^{-1}$, for $f \in\coeffalg$. Via $n \mapsto \alpha^n$, the integers act on $\coeffalg$. With $\|\cdot\|_{\infty}$ denoting the supremum norm on $\coeffalg$, we let
\[
\lone = \{\ell: \mathbb{Z} \to\coeffalg : \Vert \ell\Vert := \sum_{k} \|\ell(k)\|_{\infty} < \infty\}.
\]
We supply $\lone$ with the usual multiplication and involution:
\[
(\ell\ell^\prime) (n) := \sum_{k \in \mathbb{Z}} \ell(k) \cdot \alpha^k (\ell^\prime(n-k))\quad(\ell, \ell^\prime \in \lone),
\]
and
\[
\ell^* (n) = \overline{\alpha^n (\ell(-n))} (\ell\in\lone),
\]
so that it becomes a unital Banach $\sp\ast$-algebra with isometric involution.

A convenient way to work with $\lone$ is provided by the following. For $n,m \in \mathbb{Z}$, let
\begin{equation*}
  \chi_{\{n\}} (m) =
  \begin{cases}
    1 &\text{if }m =n;\\
    0 &\text{if }m \neq n,
  \end{cases}
\end{equation*}
where the constants denote the corresponding constant functions in $\coeffalg$. Then $\chi_{\{0\}}$ is the identity element of $\lone$. Let $\delta = \chi_{\{1\}}$; then $\chi_{\{-1\}}=\delta^{-1}=\delta^*$. If we put $\delta^0=\chi_{\{0\}}$, then $\delta^n = \chi_{\{n\}}$, for all $n \in \mathbb{Z}$. We may view $\coeffalg$ as a closed abelian $^*$-subalgebra of $\lone$, namely as $\{f_0 \delta^0 \, : \, f_0 \in \coeffalg\}$. If $\ell \in \lone$, and if we write $\ell(k)=f_k$ for short, then $\ell= \sum_{k} f_k\delta^k$, and $\Vert \ell\Vert=\sum_k \Vert f_k\Vert_\infty<\infty$.
In the rest of this paper we will constantly use this series representations $\ell= \sum_{k}f_k\delta^k$ of an arbitrary element $\ell\in\lone$, for uniquely determined $f_k\in\coeffalg$. Thus $\lone$ is generated as a unital Banach algebra by an isometrically isomorphic copy of $\coeffalg$ and the elements $\delta$ and $\delta^{-1}$, subject to the relation $\delta f \delta^{-1}=f\circ\homeo^{-1}$, for $f \in\coeffalg$. The isometric involution is determined by $f^*=\overline f$, for $f\in\coeffalg$, and $\delta^*=\delta^{-1}$.

The crossed product $C^*$-algebra $\cstar$ is the enveloping $C^*$-enveloping algebra of $\lone$; we refer to \cite{williams} for the general theory of such algebras. According to \cite[2.7.1]{dixmier}, the enveloping $C^*$-seminorm of an element $\ell\in\lone$ can be calculated as
\[
\Vert\ell\Vert^\prime=\sup_{\st\in B} \st(\ell^*\ell)^{1/2},
\]
where $B$ is the set of continuous positive forms on $\lone$ of norm at most 1.\footnote{Since $\lone$ is unital, a positive form $\st$ on $\lone$ is automatically continuous, and $\Vert\st\Vert=\st(1)$, cf.\ \cite[2.1.4]{dixmier}.} For $\ptx\in\topspace$, $\st_{\ptx}(a):=f_0(x)$, for $\ell=\sum_k f_k\delta^k\in\lone$, defines a continuous positive form of norm 1, and since $\st_x(\ell^*\ell)=\sum_k{\vert f_k(\homeo^kx)\vert^2}$, one sees that $\Vert\,.\,\Vert^\prime$ is actually a norm on $\lone$. Thus we can view $\lone$ as a dense subalgebra of $\cstar$.

It can be shown \cite{tomiyama_book, tomiyama_notes_one} that the canonical norm one projection $\Eone:\lone\to\coeffalg$, defined by $\Eone(\ell)=f_0$, if $\ell=\sum_k f_k\delta^k\in\lone$, extends to a faithful norm one projection $\Estar:\cstar\to\coeffalg$, (automatically) such that $\Estar(fcg)=f\Estar(c)g$, for $c\in\cstar$, and $f,g\in\coeffalg$.\footnote{In a purely $C^*$-algebra context it is customary to write $E$ for this projection in $\cstar$. With $\lone$ also under consideration, $\Estar$, together with $\Eone$, is now a more appropriate notation.} If $c\in\cstar$, and $k\in\Z$, then we define its generalised Fourier coefficient $c(k)\in\coeffalg$ as $c(k):=\Estar(c\cdot\delta^{-k})$. Then
\begin{equation}\label{e:projection_properties}
(f\cdot c)(k)=f\cdot c(k),\quad (c\cdot f)(k)=(f\circ\homeo^{-k})\cdot c(k),
\end{equation}
for $f\in\coeffalg$, $c\in\cstar$, and $n\in\Z$. Furthermore, if $c\in\cstar$, then its sequence of Ces\`aro means $(\cesaro_N(c))_N$, defined by
\[
\cesaro_N(c):=\sum_{j=-N}^N \left(1-\frac{|j|}{N+1}\right)c(j)\delta^j,
\]
converges to $c$ in the norm of $\cstar$ as $N\to\infty$, cf.\ \cite[Theorem~1]{tomiyama_structure}.

We let $\lonecomm$ denote the commutant of $\coeffalg$ in $\lone$, and $\cstarcomm$ denote the commutant of $\coeffalg$ in $\cstar$. It is not too difficult to see, cf.\ \cite[Propositions~3.1 and 3.2]{de_jeu_svensson_tomiyama}, that
\begin{equation}\label{e:lonecommutant_description}
\lonecomm=\left\{\sum_k f_k\delta^k\in\lone : \supp(f_k)\subset\fixk\textup{ for all }k\in\Z\right\},
\end{equation}
and that $\lonecomm$ is commutative again, hence an involutive maximal abelian subalgebra of $\lone$. Likewise, cf.\ \cite[Proposition~3.2]{svensson_tomiyama},
\begin{equation}\label{e:cstarcommutant_description}
\cstarcomm=\left\{c\in\cstar : \supp(c(k))\subset\fixk\textup{ for all }k\in\Z\right\},
\end{equation}
and $\cstarcomm$ is again commutative, hence an involutive maximal abelian subalgebra of $\cstar$.

We let $\fs$ denote the elements of $\lone$ of the form $\ell=\sum_{k} f_k \delta^k$, where the summation is finite. It is a unital involutive subalgebra of $\lone$, which is dense in $\lone$ in the norm of $\lone$, and hence dense in $\cstar$. Let $\fscomm$ denote the commutant of $\coeffalg$ in $\fs$. Clearly $\fscomm=\lonecomm\cap\fs=\cstarcomm\cap\fs$. It is obvious from \eqref{e:lonecommutant_description} that $\fscomm$ is dense in $\lonecomm$ in the norm of $\lone$. It is also true that $\fscomm$ is dense in $\cstarcomm$. Indeed, if $c\in\cstarcomm$, then it is clear from \eqref{e:cstarcommutant_description} that each of the Ces\`aro means $\cesaro_N(c)$ is an element of $\fscomm$, and we know these means to converge to $c$. The density of $\fscomm$ in $\cstarcomm$ implies that $\lonecomm$ is dense in $\cstarcomm$. To summarise, we have the following inclusions:

\begin{equation*}
\xymatrix
{
&&&\lone\ar @{^{(}->}[dd]^{\text{dense}}\\
\coeffalg\ar @{^{(}->}[r]&\fscomm\ar @{^{(}->}[r]&\fs\quad\ar @{^{(}->}[ru]^{\text{dense}}\ar @{^{(}->}[rd]^{\text{dense}}&\\
&&&\cstar\\
&&\lonecomm\ar @{^{(}->}[r]\ar @{^{(}->}[dd]^{\text{dense}}&\lone\ar @{^{(}->}[dd]^{\text{dense}}\\
\coeffalg\ar @{^{(}->}[r]&\fscomm\quad\ar @{^{(}->}[ur]^{\text{dense}}\ar @{^{(}->}[dr]^{\text{dense}}&&\\
&&\cstarcomm\ar @{^{(}->}[r]&\cstar
}
\end{equation*}

Furthermore, let us recall that the GNS-theory is available for unital Banach algebras with an isometric involution, cf.\ \cite{dixmier}, so that the pure states of $\lonecomm$ are in bijection with its topologically irreducible representations. Since $\lonecomm$ is commutative, Schur's lemma then implies that the pure states of $\lonecomm$ are precisely the hermitian (i.e., self-adjoint) characters of $\lonecomm$. We will use these terms interchangeably.

\medskip

For convenience, we list explicitly the following elementary facts, to be used without further mention in the sequel:
\begin{enumerate}[\upshape (i)]
\item $\homeo^m\left(\fixn\right)=\fixn$, for all $m,n\in\Z$.
\item If $f\in\coeffalg$, and $\supp (f)\subset\fixn$, for some $n\in\Z$, then $\supp(f\circ\homeo^m)\subset\fixn$, for all $m\in\Z$.
\item If $f\in\coeffalg$, and $\supp (f)\subset\fixn$, for some $n\in\Z$, then $f\circ\homeo^{mn}=f$, for all $m\in\mathbb Z$.
\item $\fixm\cap\fixn=\fix_{\gcd(m,n)}(\homeo)$, for all $m,n\geq 0$.
\end{enumerate}

We conclude our preliminaries with the following result, which is also elementary, but nevertheless central to several arguments.

\begin{lem}\label{l:one}
 Suppose $\pt\in\fixnint$, for some $n\geq 1$, and that $n$ is minimal with these properties. If $f\in\coeffalg$, $m\in\Z$, $\supp (f)\subset\fixm$, and $n\nmid m$, then $f(\pt)=0$.
\end{lem}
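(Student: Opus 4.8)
The plan is to argue by contradiction, so that the statement reduces to the elementary divisibility fact~(iv) above together with the minimality of $n$.

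First I would normalise the index $m$. Since $\fixm=\fix_{|m|}(\homeo)$ and $n\mid m$ if and only if $n\mid |m|$, I may assume $m\geq 0$; and $m=0$ is impossible under the hypotheses, since $\fix_0(\homeo)=\topspace$ would give $n\mid m$. So assume $m\geq 1$, and suppose, towards a contradiction, that $f(\pt)\neq 0$.

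Consider the open set $U:=\{\,y\in\topspace : f(y)\neq 0\,\}$. It contains $\pt$, and $U\subset\supp(f)\subset\fixm$ by hypothesis. Since $\pt$ also lies in the open set $\fixnint$, the set
\[
V:=U\cap\fixnint
\]
is open and contains $\pt$; in particular it is non-empty, and $V\subset\fixm\cap\fixn=\fix_{d}(\homeo)$, where $d:=\gcd(m,n)$, by fact~(iv). As $V$ is open, this forces $V\subset\fix_{d}^{\intsymbol}(\homeo)$, hence $\pt\in\fix_{d}^{\intsymbol}(\homeo)$. Now $d$ divides $n$, and $d\neq n$ precisely because $n\nmid m$; thus $1\leq d<n$ while $\pt\in\fix_{d}^{\intsymbol}(\homeo)$, contradicting the minimality of $n$. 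This contradiction shows $f(\pt)=0$.

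I do not expect a genuine obstacle here: the argument is a short chain of inclusions of open sets. The only points requiring attention are the normalisation $m\geq 1$, which guarantees $d=\gcd(m,n)\geq 1$ so that fact~(iv) applies, and the inequality $d<n$, which is exactly where the hypothesis $n\nmid m$ enters; the minimality of $n$ is then used once, at the very end.
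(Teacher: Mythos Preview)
Your argument is correct and follows essentially the same route as the paper's proof: reduce to $m\geq 1$, assume $f(\pt)\neq 0$ to obtain open neighbourhoods of $\pt$ inside both $\fixm$ and $\fixn$, intersect, apply fact~(iv) to land in $\fix_{\gcd(m,n)}^{\intsymbol}(\homeo)$, and contradict the minimality of $n$. The only cosmetic difference is that you name the open sets explicitly (the non-vanishing locus of $f$ and $\fixnint$) whereas the paper simply invokes unnamed open neighbourhoods.
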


\begin{proof}
We may assume that $m\geq 1$. There exists an open neighbourhood $U$ of $\pt$, such that $U\subset\fixn$. If $f(\pt)\neq 0$, then there exists an open neighbourhood $V$ of $\pt$, such that $V\subset\fixm$. Then $\pt\in U\cap V\subset \fixn\cap\fixm=\fix_{\gcd (n,m)}(\homeo)$, so that $\pt\in\fix_{\gcd (n,m)}^\intsymbol(\homeo)$. Now $1\leq\gcd(n,m)$, and $\gcd(n,m)<n$, since $n\nmid m$. Hence the minimality of $n$ is contradicted.
\end{proof}

\section{The commutant $\lonecomm$ of $\coeffalg$ in $\lone$}\label{s:maxidealspace_lone}

In this section we study the $\lonecomm$, the commutant of $\coeffalg$ in $\lone$. We describe its maximal ideal space $\maxidealspacelonecomm$ as a set in Proposition~\ref{p:nine}, and as a topological quotient of $\topspace\times\T$ in Theorem~\ref{t:topological_quotient}. It turns out, cf.\ Proposition~\ref{p:nine}, that all characters of $\lonecomm$ are hermitian, or, equivalently, that $\maxidealspacelonecomm$ coincides with the pure state space of $\lonecomm$. As a consequence, $\lonecomm$ is a hermitian involutive Banach algebra and, moreover, it can be shown to be semisimple, cf.\ Theorem~\ref{t:hermitian_and_semisimple}. It is then easy to generate dense subsets of $\maxidealspacelonecomm$, which will separate the points of $\lonecomm$, as in Corollary~\ref{c:separating_lonecomm}.

The first step is to determine all characters of the algebra $\fscomm$, i.e., all non-zero complex homomorphisms, including those which are not continuous in the norm induced from $\lone$. Continuity can then easily be taken into account later on. The next lemma describes the basic structure of possible characters of $\fscomm$, showing that they are closely related to point evaluations $\ev_{\pt}$ on $\coeffalg$.

\begin{lem}\label{l:two}
Let $\ch$ be a character of $\fscomm$, and let $\pt\in \topspace$ be the unique point such that $\ch\rest_\coeffalg=\ev_{\pt}$.
Then there exists a sequence $\left( c_k\right )_{k\in\Z}$, with $c_0=1$, such that $\ch(f\delta^k)=c_k f(\pt)$, for all $k\in\Z$ and all $f\in\coeffalg$ with $\supp(f)\subset\fixk$.
\end{lem}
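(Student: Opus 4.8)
The plan is to use only that $\ch$ is multiplicative, together with the few products available inside $\fscomm$, to force $\ch(f\delta^k)$ to depend on $f$ solely through $f(\ptx)$. Since $\ch$ is a non-zero homomorphism of the \emph{unital} algebra $\fscomm$, $\ch(1)=1$; applied to $f\delta^0=f$ with $f\in\coeffalg$ this is already the case $k=0$, with $c_0=1$. Fix $k\in\Z$ and write $D_k:=\{f\in\coeffalg:\supp(f)\subset\fixk\}$, a linear subspace of $\coeffalg$; for $f\in D_k$ we have $f\delta^k\in\fscomm$, so $L_k\colon f\mapsto\ch(f\delta^k)$ is a well-defined linear functional on $D_k$, and the assertion for this $k$ is exactly that $L_k$ is a scalar multiple of $f\mapsto f(\ptx)$.

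The heart of the proof is the claim that $L_k(f)=0$ whenever $f\in D_k$ and $f(\ptx)=0$. To prove this I would first reduce to the case $f\geq 0$: writing $f$ as the usual linear combination of $(\operatorname{Re}f)^{\pm}$ and $(\operatorname{Im}f)^{\pm}$, each of these four functions is non-negative, vanishes at $\ptx$, and has support contained in $\supp(f)\subset\fixk$, hence lies in $D_k$. If now $g\in D_k$, $g\geq 0$, and $g(\ptx)=0$, then $\sqrt g\in\coeffalg$ with $\supp(\sqrt g)=\supp(g)\subset\fixk$, so $\sqrt g\,\delta^k\in\fscomm$ and, with $\sqrt g$ also regarded as the element $\sqrt g\,\delta^0$ of $\coeffalg\subset\fscomm$, one has the factorisation $g\,\delta^k=\sqrt g\cdot\bigl(\sqrt g\,\delta^k\bigr)$ inside $\fscomm$. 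Applying $\ch$ and using $\ch\rest_{\coeffalg}=\ev_{\ptx}$ gives $\ch(g\delta^k)=\ch(\sqrt g)\,\ch(\sqrt g\,\delta^k)=\sqrt{g(\ptx)}\,\ch(\sqrt g\,\delta^k)=0$, and summing the four contributions proves the claim for general $f$.

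Granting the claim, the rest is elementary. If every $f\in D_k$ vanishes at $\ptx$ (equivalently, $\ptx\notin\fixkint$), then $L_k\equiv 0$ and we set $c_k:=0$. Otherwise pick $g_0\in D_k$ with $g_0(\ptx)=1$ — rescale any $g_0\in D_k$ with $g_0(\ptx)\neq 0$ — and set $c_k:=\ch(g_0\delta^k)$. For arbitrary $f\in D_k$ the function $f-f(\ptx)g_0$ lies in $D_k$ and vanishes at $\ptx$, so the claim gives $\ch\bigl((f-f(\ptx)g_0)\delta^k\bigr)=0$, i.e.\ $\ch(f\delta^k)=c_k f(\ptx)$. (Then $c_k=\ch(f\delta^k)/f(\ptx)$ for every $f\in D_k$ with $f(\ptx)\neq 0$, so $c_k$ does not depend on the choice of $g_0$.) This yields the sequence $(c_k)_{k\in\Z}$ with the stated properties.

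The one genuinely non-formal step is the vanishing claim. Because $\ch$ is \emph{not} assumed continuous for the $\lone$-norm, nor to respect the involution, neither a positivity argument nor a Ces\`aro-type limiting argument is available, and the factorisation $g\delta^k=\sqrt g\cdot(\sqrt g\,\delta^k)$ — expressing $g\delta^k$ as a product inside $\fscomm$ of the function $\sqrt g\in\coeffalg$ (which evaluates to $0$ at $\ptx$) and the element $\sqrt g\,\delta^k$ — is precisely the device that converts ``$f(\ptx)=0$'' into ``$\ch(f\delta^k)=0$''; note it is essential that $\sqrt g$, unlike a function equal to $1$ on $\supp(g)$, is automatically supported in $\fixk$. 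The remaining points — the reduction to non-negative $f$, the continuity of $\sqrt g$, and keeping all auxiliary functions supported in $\fixk$ — are routine.
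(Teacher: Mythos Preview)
Your proof is correct, and the central mechanism differs from the paper's. Both arguments establish the same key claim --- that $\ch(f\delta^k)=0$ whenever $f\in D_k$ and $f(\ptx)=0$ --- and then conclude by the same elementary linear-algebra step (kernel containment gives a scalar multiple of $\ev_{\ptx}$). The difference lies in how the claim is proved. The paper squares $f\delta^k$ inside $\fscomm$ and, using $f\circ\homeo^{-k}=f$ (since $\supp(f)\subset\fixk$), obtains
\[
\ch(f\delta^k)^2=\ch\bigl((f\delta^k)^2\bigr)=\ch(f^2\delta^{2k})=\ch(f)\,\ch(f\delta^{2k})=f(\ptx)\,\ch(f\delta^{2k}),
\]
from which $f(\ptx)=0$ forces $\ch(f\delta^k)=0$ directly for arbitrary $f$. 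Your route instead reduces to $g\geq 0$ via the real/imaginary positive/negative decomposition and then factors $g\delta^k=\sqrt g\cdot(\sqrt g\,\delta^k)$ with $\sqrt g\in\coeffalg$. The paper's computation is a single line, avoids the four-term decomposition and the square root, and handles all $f$ at once; on the other hand it silently uses the inclusion $\fixk\subset\fix_{2k}(\homeo)$ to ensure $f\delta^{2k}\in\fscomm$, while your factorisation never leaves degree~$k$. Either device cleanly converts ``$f(\ptx)=0$'' into ``$\ch(f\delta^k)=0$'' using only multiplicativity, which is exactly what is needed given that no continuity or $*$-compatibility of $\ch$ is assumed.
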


\begin{proof}
For $f\in\coeffalg$ and $k\in\Z$ we compute, using that $\supp(f)\subset\fixk$ in the third step:
\begin{align*}
\ch(f\delta^k)^2&=\ch(f\delta^k\cdot f\delta^k)\\
&=\ch (f\cdot[f\circ \homeo^{-k}]\delta^{2k})\\
&=\ch (f^2\delta^{2k})\\
&=\ch(f)\,\ch(f\delta^{2k})\\
&=f(\pt)\ch(f\delta^{2k}).
\end{align*}
Hence, if we let $V_k:=\{f\in\coeffalg : \supp(f)\subset\fixk\}$, then the linear map $\ch_k:V\to\C$, defined by $f\mapsto\ch(f\delta^k)$, for $f\in V_k$, has a kernel containing the kernel of $\ev_{\pt}\rest_{V_k}$. Therefore, there exists $c_k\in\C$, such that $\ch_k=c_k\ev_{\pt}\rest_{V_k}$, as claimed. Clearly $c_0=1$.
\end{proof}

 We will now investigate this further, depending on the properties of $\pt$. The relevant criterion will turn out to be whether $\pt$ is an element of $\intfixedpoints$ or not.

\begin{lem}\label{l:three}
Let $\ch$ be a character of $\fscomm$, and let $\pt\in \topspace$ be the unique point such that $\ch\rest_\coeffalg=\ev_{\pt}$. Suppose that $\pt\notin\intfixedpoints$. Then $\ch\left(\sum_k f_k\delta^k\right)=f_0(\pt)$, for all $\sum_k f_k\delta^k\in\fscomm$.
\end{lem}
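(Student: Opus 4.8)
The plan is to combine Lemma~\ref{l:two} with the fact that $\fscomm$ consists of \emph{finite} sums, so that no convergence or continuity considerations enter. First I would record that every element of $\fscomm$ has the form $\sum_{k\in F}f_k\delta^k$ for some finite $F\subset\Z$ and $f_k\in\coeffalg$ with $\supp(f_k)\subset\fixk$ for each $k\in F$; this is the finitely-supported version of \eqref{e:lonecommutant_description}. Since $\ch$ is linear and Lemma~\ref{l:two} provides a sequence $(c_k)_{k\in\Z}$ with $c_0=1$ and $\ch(f\delta^k)=c_kf(\pt)$ for all $f$ with $\supp(f)\subset\fixk$, we obtain
\[
\ch\left(\sum_{k\in F}f_k\delta^k\right)=\sum_{k\in F}c_kf_k(\pt).
\]
It therefore suffices to show that $c_kf_k(\pt)=0$ for every $k\in F$ with $k\neq 0$; then the sum collapses to $c_0f_0(\pt)=f_0(\pt)$, as desired.

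The heart of the matter is a short topological observation using the hypothesis $\pt\notin\intfixedpoints$. Fix $k\neq 0$. If $f_k(\pt)\neq 0$, then the set $\{y\in\topspace : f_k(y)\neq 0\}$ is an open neighbourhood of $\pt$ contained in $\supp(f_k)\subset\fixk$, so $\pt\in\fixkint$. Because a point fixed by $\homeo^k$ is also fixed by $\homeo^{-k}$, we have $\fix_k(\homeo)=\fix_{|k|}(\homeo)$, hence $\pt\in\fix_{|k|}^{\intsymbol}(\homeo)$ with $|k|\geq 1$, which gives $\pt\in\intfixedpoints$, contradicting the hypothesis. Thus $f_k(\pt)=0$ for all $k\neq 0$, and the proof is complete.

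I do not expect a genuine obstacle here: once Lemma~\ref{l:two} is in hand, the argument is essentially the remark that on the finitely-supported algebra $\fscomm$ a character is determined termwise, together with the elementary fact that a function supported in $\fixk$ which is non-zero at $\pt$ forces $\pt$ into the interior of $\fix_{|k|}(\homeo)$. The only minor care needed is the identification $\fix_k=\fix_{|k|}$ to land inside $\intfixedpoints$; the case $\pt\in\intfixedpoints$, where the $c_k$ need not vanish, is left to the subsequent analysis.
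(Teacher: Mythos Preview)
Your proof is correct and follows essentially the same route as the paper's own argument: invoke Lemma~\ref{l:two} to write $\ch(\sum_k f_k\delta^k)=\sum_k c_k f_k(\pt)$, and then observe that $f_k(\pt)\neq 0$ for $k\neq 0$ would force $\pt\in\fix_{|k|}^\intsymbol(\homeo)\subset\intfixedpoints$, a contradiction. The paper states this in one line, while you have helpfully spelled out the open set $\{y:f_k(y)\neq 0\}$ and the identification $\fixk=\fix_{|k|}(\homeo)$, but the idea is the same.
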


\begin{proof}
From Lemma~\ref{l:two} we have $\ch\left(\sum_k f_k\delta^k\right)=\sum_k c_k f_k(\pt)$, for some sequence $(c_k)_{k\in\Z}$ with $c_0=1$. However, for $k\neq 0$ we must have $f_k(\pt)=0$, since otherwise $\pt\in\fix_{|k|}^\intsymbol(\homeo)$, which would contradict the assumption on $\pt$.
\end{proof}

Lemma~\ref{l:three} shows that, for $\pt\notin\intfixedpoints$, the character $\ev_{\pt}$ of $\coeffalg$ has at most one extension to a character of $\fscomm$. This candidate is actually a character, as is asserted by the following lemma.

\begin{lem}\label{l:four}
If  $\pt\notin\intfixedpoints$, then defining $\ch_{\pt}\left(\sum_k f_k\delta^k\right):=f_0(\pt)$, for $\sum_k f_k\delta^k\in\fscomm$, yields a character $\ch_{\pt}$ of $\fscomm$ extending $\ev_{\pt}$.
\end{lem}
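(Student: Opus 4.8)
The plan is to check directly that the prescribed formula $\ch_{\pt}\left(\sum_k f_k\delta^k\right)=f_0(\pt)$ defines a character of $\fscomm$, i.e.\ a nonzero complex homomorphism; hermiticity is not claimed at this point and will be established later, so nothing beyond linearity, nontriviality, and multiplicativity needs to be verified. Linearity on $\fscomm$ is immediate from the formula, $\ch_{\pt}$ is nonzero since $\ch_{\pt}(1)=\ch_{\pt}(1\cdot\delta^0)=1$, and it extends $\ev_{\pt}$ because $\ch_{\pt}(f_0\delta^0)=f_0(\pt)=\ev_{\pt}(f_0)$ under the standard embedding of $\coeffalg$ into $\fscomm$.

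For multiplicativity I would take $\ell=\sum_k f_k\delta^k$ and $\ell^\prime=\sum_m g_m\delta^m$ in $\fscomm$ — both finite sums, so no convergence issue arises — and read off the zeroth coefficient of the product: $(\ell\ell^\prime)(0)=\sum_k f_k\cdot\alpha^k(g_{-k})=\sum_k f_k\cdot(g_{-k}\circ\homeo^{-k})$. Hence $\ch_{\pt}(\ell\ell^\prime)=\sum_k f_k(\pt)\,g_{-k}(\homeo^{-k}\pt)$, and the whole point is to show that every summand with $k\neq 0$ vanishes, which collapses this expression to $f_0(\pt)g_0(\pt)=\ch_{\pt}(\ell)\,\ch_{\pt}(\ell^\prime)$.

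The key observation is precisely this vanishing. If some term with $k\neq 0$ were nonzero, then in particular $f_k(\pt)\neq 0$, so $\pt$ lies in the open set $\{f_k\neq 0\}$, which is contained in $\supp(f_k)\subset\fixk=\fix_{|k|}(\homeo)$; thus $\pt\in\fix_{|k|}^\intsymbol(\homeo)$ with $|k|\geq 1$, contradicting the hypothesis $\pt\notin\intfixedpoints$. This is the only genuine step, and it is short; the remainder is bookkeeping. I do not anticipate any real obstacle — the one thing to be careful about is getting the index conventions for the twisted convolution right, so that the zeroth coefficient of $\ell\ell^\prime$ is indeed $\sum_k f_k\cdot\alpha^k(g_{-k})$.
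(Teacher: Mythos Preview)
Your proof is correct and uses essentially the same idea as the paper: both arguments hinge on the observation that, for $k\neq 0$, the support condition $\supp(f_k)\subset\fixk$ forces $f_k(\pt)=0$ whenever $\pt\notin\intfixedpoints$. The only difference is organisational---the paper checks multiplicativity on spanning monomials $f_n\delta^n$, $f_m\delta^m$ with a small case split on whether $n+m=0$, whereas you compute the full zeroth convolution coefficient $(\ell\ell^\prime)(0)=\sum_k f_k\cdot(g_{-k}\circ\homeo^{-k})$ and kill all $k\neq 0$ terms at once; your packaging is arguably a touch cleaner, but the substance is identical.
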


\begin{proof}
We check multiplicativity on a spanning set. Let $n,m\in\Z$, and suppose $f_n,f_m\in\coeffalg$ with $\supp (f_n)\subset\fixn$, and $\supp (f_m)\subset\fixm$. Then we must check that
\begin{equation}\label{e:multiplicativity}
\ch_{\pt}\left(f_n\delta^n\cdot f_m\delta^m\right)=\ch_{\pt}\left(f_n\delta^n\right)\ch_{\pt}\left(f_m\delta^m\right).
\end{equation}
We distinguish two cases. In the first case, if $n=m=0$, then the left hand side and the right hand side in \eqref{e:multiplicativity} are both equal to $f_n(\pt)f_m(\pt)$, and we are done. In the second case, if at least one of $n$ and $m$ is non-zero, then the right hand side in \eqref{e:multiplicativity} is zero. The left hand side is equal to $\ch_{\pt}\left(f_n\cdot[f_m\circ\homeo^{-n}]\delta^{n+m}\right)$. If, as a subcase, $n+m\neq 0$, then this left hand side is equal to zero, and we are done. If, as the remaining subcase, $n+m=0$, then in particular $n=-m\neq 0$, and the left hand side equals $f_n(\pt)f_m(\homeo^{-n}\pt)$. Since $\pt\notin\intfixedpoints$, it follows that $f_n(\pt)=0$, as otherwise $\pt\in\fix_{|n|}^\intsymbol(\homeo)$, contradicting the assumption on $\pt$. Hence we are done with the subcase $n+m=0$ as well.
\end{proof}

Having taken care of the characters of $\fscomm$ extending $\ev_{\pt}$, for $\pt\notin\intfixedpoints$, we turn to the case where $\pt\in\intfixedpoints$. As we will see, the extensions of $\ev_{\pt}$ are then parametrised by $\Ccross:=\C\setminus\{0\}$. As above, we start by first describing potential extensions.

\begin{lem}\label{l:five}
Let $\ch$ be a character of $\fscomm$, and let $\pt\in \topspace$ be the unique point such that $\ch\rest_\coeffalg=\ev_{\pt}$.
 Suppose that $\pt\in\intfixedpoints$. Let $n$ be the minimal $n\geq 1$ such that $\pt\in\fixnint$. Then there exists a unique $c\in\Ccross$, such that $\ch\left(\sum_k f_k\delta^k\right)=\sum_j f_{jn}(\pt)c^j$, for all $\sum_k f_k\delta^k\in\fscomm$.
\end{lem}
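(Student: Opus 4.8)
The plan is to combine Lemma~\ref{l:two} with the new ingredient that $\ptx$ lies in $\fixnint$ with $n$ minimal, using Lemma~\ref{l:one} to kill off the "wrong" Fourier modes. By Lemma~\ref{l:two} there is a sequence $(c_k)_{k\in\Z}$ with $c_0=1$ and $\ch(f\delta^k)=c_k f(\ptx)$ whenever $\supp(f)\subset\fixk$. First I would observe that the terms $f_k\delta^k$ with $n\nmid k$ contribute nothing: if $\sum_k f_k\delta^k\in\fscomm$ then $\supp(f_k)\subset\fixk$ for each $k$, and Lemma~\ref{l:one} (applied with $m=k$) gives $f_k(\ptx)=0$ as soon as $n\nmid k$. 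Hence $\ch\bigl(\sum_k f_k\delta^k\bigr)=\sum_j c_{jn} f_{jn}(\ptx)$, and it only remains to pin down the coefficients $c_{jn}$.

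The second step is to show $c_{jn}=c_n^{\,j}$ for all $j\in\Z$, where I set $c:=c_n$; equivalently, to produce a single number $c$ generating all of them multiplicatively. To do this I would build, near $\ptx$, a function that witnesses $\delta^n$ being invertible in a suitable corner of $\fscomm$. Concretely, choose an open neighbourhood $U$ of $\ptx$ with $U\subset\fixn$, and pick $g\in\coeffalg$ with $0\le g\le 1$, $g\equiv 1$ near $\ptx$, and $\supp(g)\subset U\subset\fixn$. Then $\supp(g)\subset\fixn$ forces $g\circ\homeo^{-n}=g$ (fact (iii) in the preliminaries), so $g\delta^n$ and $g\delta^{-n}$ multiply nicely: $(g\delta^n)(g\delta^{-n}) = g\cdot(g\circ\homeo^{n})\delta^0 = g^2\delta^0$, and more generally $(g\delta^n)^j = g^j\delta^{jn}$ and $(g\delta^n)(g\delta^{-n})^{\,} $ behaves like a unit up to the idempotent-like function $g^2$. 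Applying $\ch$ and using $\ch(g)=g(\ptx)=1$ together with $\ch(g^m\delta^{jn}) = c_{jn}g^m(\ptx)=c_{jn}$, multiplicativity of $\ch$ gives $c_{jn}=c_n^{\,j}$ for $j\ge 1$ from $(g\delta^n)^j=g^j\delta^{jn}$, and $c_n c_{-n}=\ch(g\delta^n)\ch(g\delta^{-n})=\ch(g^2\delta^0)=1$ shows $c_n\in\Ccross$ with $c_{-n}=c_n^{-1}$; iterating (or applying the same computation to $\delta^{-n}$) yields $c_{jn}=c_n^{\,j}$ for all $j\in\Z$. Writing $c=c_n$ then gives $\ch\bigl(\sum_k f_k\delta^k\bigr)=\sum_j f_{jn}(\ptx)c^j$.

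Finally, uniqueness of $c$ is immediate: if two values $c,c'$ both work, then testing on $g\delta^n$ (with $g$ as above, so that $g\delta^n\in\fscomm$ and $g(\ptx)=1$) gives $c=\ch(g\delta^n)=c'$. I would also remark in passing that the sum $\sum_j f_{jn}(\ptx)c^j$ is finite because $\sum_k f_k\delta^k\in\fscomm$ has only finitely many nonzero terms, so there is no convergence issue at this stage. The main obstacle is the middle step: one has to manufacture the cutoff function $g$ and verify that the support condition makes $g\delta^{\pm n}$ land in $\fscomm$ and multiply as if $\delta^n$ were a genuine invertible element, so that the single scalar $c_n$ propagates to all indices $jn$. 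Everything else is bookkeeping built on Lemmas~\ref{l:one} and~\ref{l:two}.
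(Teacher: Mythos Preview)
Your argument is correct and follows essentially the same route as the paper: invoke Lemma~\ref{l:two} for the coefficients $c_k$, use Lemma~\ref{l:one} to reduce to the indices $jn$, then choose a bump function supported in $\fixn$ with value $1$ at $\pt$ and exploit multiplicativity on the elements $g\delta^{\pm n}$ and their powers to obtain $c_{jn}=c_n^{\,j}$ and $c_n\in\Ccross$; the paper does exactly this with a function it calls $f_0$. One cosmetic slip: the commutation rule gives $(g\delta^n)(g\delta^{-n})=g\cdot(g\circ\homeo^{-n})$ rather than $g\cdot(g\circ\homeo^{n})$, but since $\supp(g)\subset\fixn$ both equal $g^2$ and your conclusion is unaffected.
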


\begin{proof}
Lemma~\ref{l:two} provides a sequence $(c_k)_{k\in\Z}$, such that $\ch\left(\sum_k f_k\delta^k\right)=\sum_k c_k f_k (\pt)$. By Lemma~\ref{l:one}, this simplifies to $\ch(\sum_k f_k \delta^k)=\sum_j c_{jn} f_{jn}(\pt)$, so it remains to show that there exists $c\in\Ccross$, evidently unique, such that $c_{jn}=c^j$, for all $j\in\mathbb Z$. Since $\pt\in\fixnint$, there exists $f_0\in\coeffalg$ with $\supp (f_0)\subset\fixnint$ and $f_0(\pt)=1$. Then
\begin{equation}\label{e:lemmafive}
\ch\left(f_0 \delta^n\cdot f_0\delta^{-n}\right)=\ch\left(f_0\delta^n\right)\ch\left(f_0\delta^{-n}\right).
\end{equation}
The right hand side in \eqref{e:lemmafive} equals $c_n f_0(\pt) c_{-n} f_0(\pt)=c_n c_{-n}$.
The left hand side equals $\ch(f_0\cdot[f_0\circ\homeo^{-n}])=\ch(f_0^2)=f_0^2(\pt)=1$. Write $c=c_n$; then $c\neq 0$.
\\For $j\geq 1$ we have $(f_0\delta^n)^j=f_0^j\delta^{jn}$, hence $\ch((f_0\delta^n)^j)=\ch(f_0^j \delta^{jn})$, so $c^j=c_{jn}f_0^j(\pt)=c_{jn}$. Also, for $j\geq 1$, $(f_0\delta^{-n})^j=f_0^j\delta^{-jn}$, hence $\ch((f_0\delta^{-n})^j)=\ch(f_0^j\delta^{-jn})$, so $(c_{-n})^j=c_{-jn} f_0^j(\pt)=c_{-jn}$. Since $c_{-n}=\frac{1}{c_n}=\frac{1}{c}$, we find that $c_{-jn}=c^{-j}$. Hence $c_{jn}=c^j$, for all $j\in\Z$.
\end{proof}

Just as before, the candidate extensions are, in fact, characters of $\fscomm$, as is asserted by the next result.

\begin{lem}\label{l:six}
Let $\pt\in\intfixedpoints$, and suppose $n$ is the minimal $n\geq 1$ such that $\pt\in\fixnint$. Let $c\in\Ccross$. Then $\ch_{\pt,c}(\sum_k f_k\delta^k):=\sum_j f_{jn}(\pt)c^j$, for $\sum_k f_k\delta^k\in\fscomm$, defines a character $\ch_{\pt,c}$ of $\fscomm$ extending $\ev_{\pt}$.
\end{lem}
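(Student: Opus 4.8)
The plan is to check, in turn, that $\ch_{\pt,c}$ is well-defined, linear, non-zero, multiplicative, and restricts to $\ev_{\pt}$ on $\coeffalg$; of these only multiplicativity requires an argument. Well-definedness and linearity are immediate, since each element of $\fscomm$ is a \emph{finite} sum $\sum_k f_k\delta^k$ with the $f_k\in\coeffalg$ uniquely determined. As $1\cdot\delta^0$ (the identity of $\fscomm$) has $\ch_{\pt,c}(1\cdot\delta^0)=1$, the functional is non-zero; and as $\ch_{\pt,c}(f\delta^0)=f(\pt)$ for every $f\in\coeffalg$, it restricts to $\ev_{\pt}$ on $\coeffalg$.

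For multiplicativity, note first that, since $\ch_{\pt,c}$ is linear and $\fscomm$ is linearly spanned by the elements $f\delta^k$ with $k\in\Z$ and $\supp(f)\subset\fix_k(\homeo)$, it is enough to verify
\[
\ch_{\pt,c}\bigl(f_a\delta^a\cdot f_b\delta^b\bigr)=\ch_{\pt,c}\bigl(f_a\delta^a\bigr)\,\ch_{\pt,c}\bigl(f_b\delta^b\bigr)
\]
for all $a,b\in\Z$ and all $f_a,f_b\in\coeffalg$ with $\supp(f_a)\subset\fix_a(\homeo)$ and $\supp(f_b)\subset\fix_b(\homeo)$. I would then record three facts driving the case analysis: (i)~directly from the definition, $\ch_{\pt,c}(g\delta^k)=g(\pt)c^{k/n}$ if $n\mid k$, while $\ch_{\pt,c}(g\delta^k)=0$ if $n\nmid k$; (ii)~by Lemma~\ref{l:one}, which applies because $n$ is minimal with $\pt\in\fixnint$, if $\supp(g)\subset\fix_k(\homeo)$ and $n\nmid k$ then $g(\pt)=0$; (iii)~$\pt\in\fixnint\subset\fixn$, hence $\homeo^{-a}\pt=\pt$ whenever $n\mid a$.

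Then one computes $f_a\delta^a\cdot f_b\delta^b=g\,\delta^{a+b}$ with $g:=f_a\cdot\bigl(f_b\circ\homeo^{-a}\bigr)$ and $\supp(g)\subset\supp(f_a)\subset\fix_a(\homeo)$, so the left-hand side above is $\ch_{\pt,c}(g\delta^{a+b})$. By (i), the right-hand side vanishes unless $n\mid a$ and $n\mid b$. For the left-hand side, $g(\pt)=f_a(\pt)\,f_b(\homeo^{-a}\pt)$; this is $0$ by (ii) if $n\nmid a$, while if $n\mid a$ then (iii) gives $g(\pt)=f_a(\pt)f_b(\pt)$, which is $0$ by (ii) if $n\nmid b$. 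So both sides are $0$ unless $n\mid a$ and $n\mid b$, and in that last case, writing $a=jn$, $b=j'n$, fact (iii) yields $g(\pt)=f_a(\pt)f_b(\pt)$ together with $n\mid(a+b)$, whence the left-hand side equals $f_a(\pt)f_b(\pt)c^{j+j'}=\bigl(f_a(\pt)c^j\bigr)\bigl(f_b(\pt)c^{j'}\bigr)$, the right-hand side. This establishes multiplicativity, and hence that $\ch_{\pt,c}$ is a character of $\fscomm$ extending $\ev_{\pt}$.

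There is no real obstacle in this argument; the only step demanding care is the mixed case $n\mid a$, $n\nmid b$ (and its mirror image $n\nmid a$, $n\mid b$), where one must first invoke fact (iii) to replace $\homeo^{-a}\pt$ by $\pt$ before Lemma~\ref{l:one} can be used to conclude that $g(\pt)$, and hence the left-hand side, vanishes.
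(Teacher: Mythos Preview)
Your proof is correct and follows essentially the same approach as the paper: both verify multiplicativity on the spanning set $\{f\delta^k:\supp(f)\subset\fixk\}$, reducing via Lemma~\ref{l:one} and the relation $\homeo^{-a}\pt=\pt$ for $n\mid a$. The only cosmetic difference is that the paper dispatches the case $n\nmid b$ by invoking the commutativity of $\fscomm$ to reduce to the case $n\nmid a$, whereas you handle it directly via fact~(iii); your closing remark about the ``mirror image'' $n\nmid a$, $n\mid b$ is slightly misleading, though, since in that case you correctly used $f_a(\pt)=0$ directly without needing (iii).
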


\begin{proof}
We check multiplicativity on a spanning set. Let $k,l\in\Z$, $f_k,f_l\in\coeffalg$, and suppose $\supp (f_k)\subset\fixk$, and $\supp (f_l)\subset \fixl$. We must check that
\begin{equation}\label{e:lemmasix}
\ch_{\pt,c}\left(f_k\delta^k\cdot f_l\delta^l\right)=\ch_{\pt,c}\left(f_k\delta^k\right)\ch_{\pt,c}\left(f_l\delta^l\right).
\end{equation}
There are three cases to consider.
\\If $n\mid k$ and $n\mid l$, then the right hand side \eqref{e:lemmasix} is equal to $f_k(\pt)c^{k/n}\cdot f_l(\pt)c^{l/n}$. The left hand side in \eqref{e:lemmasix}is equal to $f_k(\pt)f_l(\homeo^{-k}\pt)c^{(k+l)/n}$. Since $\pt\in\fixn$, and $n\mid k$, this equals the right hand side.
\\If $n\nmid k$, then the right hand side in \eqref{e:lemmasix} is zero, since the first factor is. If, as a subcase, $n\nmid(k+l)$, then the left hand side in \eqref{e:lemmasix} is zero, and we are done. If, in the remaining subcase, $n\mid(k+l)$, then the left hand side in \eqref{e:lemmasix} is equal to $f_k(\pt)f_l(\homeo^{-k}\pt)c^{(k+l)/n}$. Since $n\nmid k$, we have $f_k(\pt)=0$ by Lemma~\ref{l:one}, and so the left hand side in \eqref{e:lemmasix} is also seen to be zero.
\\In $n\nmid l$, then \eqref{e:lemmasix} holds as a consequence of the second case and the commutativity of $\lonecomm$.
\end{proof}

To summarise:

\begin{lem}\label{l:seven}
The characters of $\fscomm$ are, without multiple occurrences, the following, where $\sum_k f_k\delta^k\in\fscomm$:
\begin{enumerate}[\upshape (i)]
\item for $\pt\notin \intfixedpoints$:
\[
\ch_{\pt}\left(\sum_k f_k\delta^k\right)=f_0(\pt).
\]
\item for $\pt\in\intfixedpoints$, and $c\in\Ccross$:
\[
\ch_{\pt,c}\left(\sum_k f_k\delta^k\right)=\sum_j f_{jn}(\pt) c^j,
\]
where $n$ is the minimal $n\geq 1$ such that $\pt\in\fixnint$.
\end{enumerate}
\end{lem}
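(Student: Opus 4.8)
The plan is to assemble Lemmas~\ref{l:two}--\ref{l:six} into a single enumeration and then rule out repetitions. First I would invoke Lemma~\ref{l:two}: any character $\ch$ of $\fscomm$ restricts on $\coeffalg$ to $\ev_{\pt}$ for a unique $\pt\in\topspace$, and this point either lies in $\intfixedpoints$ or it does not. So the two cases (i) and (ii) are exhaustive, and they are mutually exclusive as far as the underlying point $\pt$ is concerned.

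In the case $\pt\notin\intfixedpoints$, Lemma~\ref{l:three} shows that $\ch$ must be given by the formula in (i), and Lemma~\ref{l:four} shows that this formula does define a character; hence the characters lying over such a $\pt$ are exactly the $\ch_{\pt}$. In the case $\pt\in\intfixedpoints$, there is a well-defined minimal $n\geq 1$ with $\pt\in\fixnint$; Lemma~\ref{l:five} shows that $\ch$ must equal $\ch_{\pt,c}$ for a unique $c\in\Ccross$, and Lemma~\ref{l:six} shows that each such $\ch_{\pt,c}$ is indeed a character. Consequently the list in (i) and (ii) exhausts all characters of $\fscomm$.

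It remains to check the absence of multiple occurrences. Two characters with different restrictions to $\coeffalg$ are distinct, so characters attached to different points $\pt$ are distinct; in particular no character of type (i) can coincide with one of type (ii), since their underlying points lie respectively outside and inside $\intfixedpoints$. For a fixed $\pt\notin\intfixedpoints$ the list contains only the single character $\ch_{\pt}$. For a fixed $\pt\in\intfixedpoints$ with associated minimal $n$, the assignment $c\mapsto\ch_{\pt,c}$ is injective: choosing $f_0\in\coeffalg$ with $\supp(f_0)\subset\fixnint$ and $f_0(\pt)=1$, one has $\ch_{\pt,c}(f_0\delta^n)=c$, so $c$ is recovered from the character itself (this is also the uniqueness already recorded in Lemma~\ref{l:five}). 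Hence the two displayed families enumerate the characters of $\fscomm$, each exactly once.

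There is essentially no obstacle here: the statement is a bookkeeping consequence of the five preceding lemmas, and the only point not already contained verbatim in them, namely injectivity of the parametrisation in case (ii), is immediate from evaluating a character on a single well-chosen element.
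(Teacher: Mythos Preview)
Your proposal is correct and matches the paper's approach exactly: the paper introduces Lemma~\ref{l:seven} with the phrase ``To summarise:'' and gives no separate proof, since the statement is precisely the bookkeeping synthesis of Lemmas~\ref{l:two}--\ref{l:six} that you have spelled out. Your explicit verification of the absence of repetitions (via restriction to $\coeffalg$ and evaluation on $f_0\delta^n$) simply makes explicit what the paper leaves to the reader.
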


It is easy to see which characters of $\fscomm$ are hermitian, and which are continuous in the norm from $\lone$.

\begin{lem}\label{l:eight}
\quad
\begin{enumerate}[\upshape (i)]
\item For $\pt\notin\intfixedpoints$, $\ch_{\pt}$ is a hermitian character of $\fscomm$, which is continuous in the topology induced by $\lone$.
\item Let $\pt\in\intfixedpoints$, and $c\in\Ccross$. Then the following are equivalent:
\begin{enumerate}[\upshape (i)]
\item $\ch_{\pt,c}$ is continuous on $\fscomm$ in the topology induced by $\lone$.
\item $\ch_{\pt,c}$ is a hermitian character of $\fscomm$.
\item $c\in\T$.
\end{enumerate}
\end{enumerate}
\end{lem}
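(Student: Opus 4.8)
The plan is to treat the two statements of the lemma separately, and for the second one to establish the implications $\text{(iii)}\Rightarrow\text{(i)}$, $\text{(iii)}\Rightarrow\text{(ii)}$, $\text{(i)}\Rightarrow\text{(iii)}$ and $\text{(ii)}\Rightarrow\text{(iii)}$, which together yield the stated equivalence; this ``spoke'' pattern is convenient because it is not clear how to pass directly from continuity to hermitianity without going through the condition $c\in\T$. Throughout I will use the explicit descriptions of the characters from Lemma~\ref{l:seven}, the convolution and involution formulas from Section~\ref{s:preliminaries}, and the elementary facts (i)--(iv) recorded there, especially fact~(iii).

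The first statement is immediate. Continuity of $\ch_\pt$ follows from $\bigl|\ch_\pt\bigl(\sum_k f_k\delta^k\bigr)\bigr|=|f_0(\pt)|\le\|f_0\|_\infty\le\sum_k\|f_k\|_\infty$, and hermitianity from the observation that the $0$-th coefficient of $\bigl(\sum_k f_k\delta^k\bigr)^*$ is $\overline{\alpha^0(f_0)}=\overline{f_0}$, so that $\ch_\pt$ evaluated on the adjoint equals $\overline{f_0(\pt)}=\overline{\ch_\pt\bigl(\sum_k f_k\delta^k\bigr)}$.

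For the second statement, fix $\pt\in\intfixedpoints$, let $n\geq 1$ be minimal with $\pt\in\fixnint$, and fix $c\in\Ccross$; recall from Lemma~\ref{l:six} that $\ch_{\pt,c}$ is already known to be a character, so in condition~(ii) only hermitianity is at issue. The implication $\text{(iii)}\Rightarrow\text{(i)}$ is again the triangle inequality: for $|c|=1$, $\bigl|\ch_{\pt,c}\bigl(\sum_k f_k\delta^k\bigr)\bigr|=\bigl|\sum_j f_{jn}(\pt)c^j\bigr|\le\sum_j\|f_{jn}\|_\infty$. For $\text{(iii)}\Rightarrow\text{(ii)}$ I would expand $\bigl(\sum_k f_k\delta^k\bigr)^*=\sum_m\overline{\alpha^m(f_{-m})}\delta^m$, use that $\pt$, being fixed by $\homeo^n$, is fixed by every $\homeo^{jn}$ to see that the coefficient of $\delta^{jn}$ evaluated at $\pt$ equals $\overline{f_{-jn}(\pt)}$, and then reindex $j\mapsto-j$ and invoke $c^{-1}=\overline c$ to identify $\ch_{\pt,c}$ of the adjoint with $\overline{\ch_{\pt,c}\bigl(\sum_k f_k\delta^k\bigr)}$; all sums here are finite, so there is no convergence to worry about. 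The remaining two implications require test elements: pick $f_0\in\coeffalg$ with $0\le f_0\le 1$, $\supp(f_0)\subset\fixnint$ and $f_0(\pt)=1$, which is possible since $\fixnint$ is an open neighbourhood of $\pt$; then $\|f_0\|_\infty=1$, and since $\supp(f_0)\subset\fixn$, fact~(iii) gives $\alpha^{\pm n}(f_0)=f_0$, so that $(f_0\delta^{\pm n})^j=f_0^j\delta^{\pm jn}$ for $j\geq 1$, with $\bigl\|f_0^j\delta^{\pm jn}\bigr\|=1$ and $\ch_{\pt,c}\bigl(f_0^j\delta^{\pm jn}\bigr)=c^{\pm j}$. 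If $\ch_{\pt,c}$ is continuous with bound $M$ on $\fscomm$, this gives $|c|^{\pm j}\le M$ for all $j\geq 1$, forcing $|c|=1$, which proves $\text{(i)}\Rightarrow\text{(iii)}$. If $\ch_{\pt,c}$ is hermitian, then, using $(f_0\delta^n)^*=\overline{f_0}\delta^{-n}$, one gets $\ch_{\pt,c}\bigl((f_0\delta^n)^*\bigr)=c^{-1}$, which must equal $\overline{\ch_{\pt,c}(f_0\delta^n)}=\overline c$, so again $|c|=1$; this proves $\text{(ii)}\Rightarrow\text{(iii)}$.

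I expect no real obstacle here; the points needing care are purely the bookkeeping in the convolution and involution computations --- in particular the repeated use of fact~(iii) to collapse $\alpha^{\pm n}(f_0)$ to $f_0$ and of $\pt\in\fixn$ to collapse $\homeo^{jn}\pt$ to $\pt$ --- and, in the direction $\text{(i)}\Rightarrow\text{(iii)}$, spotting that the norm-one elements $f_0^j\delta^{\pm jn}$ realise the values $c^{\pm j}$.
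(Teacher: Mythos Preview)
Your proof is correct and follows essentially the same approach as the paper: the triangle inequality for continuity when $|c|=1$, test elements $f_0\delta^{jn}$ of norm one to force $|c|=1$ from continuity, and an explicit involution computation for hermitianity. The only organizational difference is that the paper carries out the involution computation once for arbitrary $c$, obtaining $\ch_{\pt,c}^*=\ch_{\pt,1/\overline c}$, and then reads off both directions of (ii)$\Leftrightarrow$(iii) from the uniqueness of the parametrisation, whereas you split this into a direct verification of hermitianity when $|c|=1$ and a separate test-element argument for the converse; both routes are equally short.
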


\begin{proof}
The first part is obvious. As to the second, it is clear that (c) implies (a). To see that (a) implies (c), let $n$ be as in Lemma~\ref{l:seven}. Pick $f_0\in\coeffalg$, such that $\supp (f_0)\subset\fixn$, and $\Vert f_0\Vert_\infty=f_0(\pt)=1$. Then, for all $j\in\Z$, $\Vert f_0\delta^{jn}\Vert=1$, and $f_0\delta^{jn}\in\fscomm$. The assumed continuity implies that $|\ch_{\pt,c}(f_0\delta^{jn})|=|c^j|$ remains bounded, for $j\in\Z$. Hence $c\in\T$. In order to show the equivalence between (b) and (c) we compute, for $\pt\in\intfixedpoints$, $c\in\Ccross$, and $\sum_k f_k\delta^k\in\fscomm$, with $n$ as in Lemma~\ref{l:seven} again:
\begin{align*}
(\ch_{\pt,c})^* \left(\sum_k f_k \delta^k\right)&=\overline{\ch_{\pt,c}\left(\sum_k \delta^{-k}\overline{f}_k \right)}\\
&=\overline{\ch_{\pt,c}\left(\sum_k (\overline{f}_k \circ\homeo^k)\delta^{-k}\right)}\\
&=\overline{\ch_{\pt,c}\left(\sum_k (\overline{f}_{-k}\circ\homeo^{-k})\delta^k \right)}\\
&=\overline{\sum_j(\overline{f}_{-jn} \circ\homeo^{-jn})(\pt)c^j}\\
&=\overline{\sum_j(\overline{f}_{-jn})(\pt)c^j}\\
&=\sum_j f_{jn}(\pt)(1/\overline{c})^j \\
&=\ch_{\pt,1/\overline{c}}\left(\sum_k f_k \delta^k\right),
\end{align*}
where it was used in the fifth step that $\pt\in\fixn$. Hence $\ch_{\pt,c}^*=\ch_{\pt,1/\overline{c}}$. From the uniqueness of the parametrisation we see that $\ch_{\pt,c}^*=\ch_{\pt,c}$ if and only if $1/\overline{c}=c$, i.e., if and only if $c\in\T$.
\end{proof}

The characters of $\fscomm$ which extend to characters of $\lonecomm$ are the continuous ones, and from Lemma~\ref{l:seven} and~\ref{l:eight} we know what these are. For such characters, we will employ the same notation when viewing them as a character of $\lonecomm$. Hence we have the following description of the maximal ideal space $\maxidealspacelonecomm$ of $\lonecomm$ as a set. In Theorem~\ref{t:topological_quotient} we will also describe its topology.

\begin{prop}\label{p:nine}
The characters of $\lonecomm$ are, without multiple occurrences, the following, where $\sum_k f_k\delta^k\in\lonecomm$:
\begin{enumerate}[\upshape (i)]
\item for $\pt\notin \intfixedpoints$:
\[
\ch_{\pt}\left(\sum_k f_k\delta^k\right)=f_0(\pt).
\]
\item for $\pt\in\intfixedpoints$, and $c\in\T$:
\[
\ch_{\pt,c}\left(\sum_k f_k\delta^k\right)=\sum_j f_{jn}(\pt)c^j,
\]
where $n$ is the minimal $n\geq 1$ such that $\pt\in\fixnint$.
\end{enumerate}
All characters of $\lonecomm$ are hermitian, and the map from $\maxidealspacelonecomm$ into $\Delta(\coeffalg)$ given by restriction of characters is a continuous surjection.
\end{prop}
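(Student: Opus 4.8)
The plan is to leverage the density of $\fscomm$ in $\lonecomm$ together with the complete description of the characters of $\fscomm$ obtained in Lemmas~\ref{l:seven} and~\ref{l:eight}. A character of the unital Banach algebra $\lonecomm$ is automatically continuous of norm one, so its restriction to the dense subalgebra $\fscomm$ is a non-zero multiplicative linear functional --- i.e.\ a character of $\fscomm$ --- which is moreover continuous for the norm induced from $\lone$. Conversely, a character of $\fscomm$ that is continuous for this norm extends uniquely, by continuity, to a character of $\lonecomm$ (the Hahn--Banach-free extension-by-continuity of a bounded functional, with multiplicativity and unitality passing to the closure since multiplication is continuous). These two operations are mutually inverse by density, so $\maxidealspacelonecomm$ is in natural bijection with the set of norm-continuous characters of $\fscomm$. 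By Lemma~\ref{l:eight}, applied together with the list in Lemma~\ref{l:seven}, that set consists precisely of the functionals $\ch_{\pt}$ with $\pt\notin\intfixedpoints$ and the functionals $\ch_{\pt,c}$ with $\pt\in\intfixedpoints$ and $c\in\T$; and since Lemma~\ref{l:seven} lists the characters of $\fscomm$ without repetition, the same holds for this sublist, hence for the characters of $\lonecomm$ written in the stated form. This yields the first two parts.

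For hermiticity, recall from Lemma~\ref{l:eight} that each continuous character of $\fscomm$ in the list is hermitian. Fix such a character $\ch$ and let $\widetilde\ch$ be its extension to $\lonecomm$. Since $\fscomm$ is a $\sp*$-subalgebra of $\lonecomm$ and the involution on $\lonecomm$ is isometric, both $\widetilde\ch$ and $(\widetilde\ch)^*$ are continuous, and for $a\in\fscomm$ one has $(\widetilde\ch)^*(a)=\overline{\widetilde\ch(a^*)}=\overline{\ch(a^*)}=\ch^*(a)=\ch(a)=\widetilde\ch(a)$. Thus $(\widetilde\ch)^*$ and $\widetilde\ch$ agree on the dense subalgebra $\fscomm$, hence on all of $\lonecomm$, so $\widetilde\ch$ is hermitian. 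Since GNS-theory applies and $\lonecomm$ is commutative, this is the same as saying that $\maxidealspacelonecomm$ coincides with the pure state space of $\lonecomm$.

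Finally, consider the restriction map $\maxidealspacelonecomm\to\maxidealspacecoeffalg$. For any character $\ch$ of $\lonecomm$ one has $\ch(1)=1$ with $1\in\coeffalg$, so $\ch\rest_{\coeffalg}$ is non-zero, hence a character of $\coeffalg$, and the map is well-defined; under the usual identification $\maxidealspacecoeffalg\cong\topspace$ via point evaluations it sends $\ch_{\pt}$ and $\ch_{\pt,c}$ to $\ev_{\pt}$. Surjectivity is then immediate: for $\pt\notin\intfixedpoints$ the character $\ch_{\pt}$ maps to $\ev_{\pt}$, and for $\pt\in\intfixedpoints$ the character $\ch_{\pt,1}$ (taking $c=1\in\T$) maps to $\ev_{\pt}$. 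Continuity is the general fact that restriction of characters is continuous for the Gelfand topologies: for each fixed $f\in\coeffalg\subset\lonecomm$, the function $\ch\mapsto(\ch\rest_{\coeffalg})(f)=\ch(f)$ is continuous on $\maxidealspacelonecomm$ by definition of the Gelfand topology there, and the functions $\widehat f$, $f\in\coeffalg$, generate the Gelfand topology on $\maxidealspacecoeffalg$.

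I do not expect a genuine obstacle here, since the substantive content is already contained in Lemmas~\ref{l:one}--\ref{l:eight}; the only points needing a little care are verifying that the extension/restriction correspondence between characters of $\fscomm$ and of $\lonecomm$ is a genuine bijection --- so that no new characters appear and the ``without multiple occurrences'' clause carries over --- and transferring hermiticity across the density, both handled above.
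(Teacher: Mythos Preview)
Your proof is correct and takes essentially the same approach as the paper: both obtain the characters of $\lonecomm$ as the continuous characters of the dense subalgebra $\fscomm$, via Lemmas~\ref{l:seven} and~\ref{l:eight}, and then read off hermiticity and the properties of the restriction map. You are simply more explicit about the bijection and the transfer of hermiticity across density, points the paper treats as routine.
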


As a consequence of Proposition~\ref{p:nine} we have the following.

\begin{thm}\label{t:hermitian_and_semisimple}
The Banach algebra $\lonecomm$ is hermitian and semisimple.
\end{thm}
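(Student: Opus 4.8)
The plan is to deduce both assertions directly from Proposition~\ref{p:nine}. For hermiticity, recall that a commutative unital involutive Banach algebra is hermitian precisely when every character is hermitian (equivalently, when the Gelfand transform of every self-adjoint element is real-valued). Proposition~\ref{p:nine} asserts explicitly that \emph{all} characters of $\lonecomm$ are hermitian, so hermiticity is immediate; I would simply cite this and the standard characterization of hermitian commutative Banach algebras (e.g.\ via the spectral radius / Gelfand theory, cf.\ \cite{palmer_two}).

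For semisimplicity, the task is to show that the Jacobson radical of $\lonecomm$ is zero, i.e.\ that the characters of $\lonecomm$ separate its points — equivalently, that if $a=\sum_k f_k\delta^k\in\lonecomm$ satisfies $\ch(a)=0$ for every character $\ch$, then $a=0$. Using the description in Proposition~\ref{p:nine}, I would argue as follows. Fix $k_0\in\Z$; I want $f_{k_0}=0$, i.e.\ $f_{k_0}(\pt)=0$ for all $\pt\in\topspace$. Fix $\pt\in\topspace$. If $\supp(f_{k_0})\subset\fix_{k_0}(\homeo)$ forces $f_{k_0}(\pt)=0$ unless $\pt\in\fix_{k_0}(\homeo)$, so assume $\pt\in\fix_{k_0}(\homeo)$. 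If $\pt\notin\intfixedpoints$, then for $k\neq 0$ one has $f_k(\pt)=0$ (as in the proof of Lemma~\ref{l:three}), and $\ch_\pt(a)=f_0(\pt)=0$; if moreover $k_0\neq 0$ we already have $f_{k_0}(\pt)=0$, and if $k_0=0$ this is exactly $f_0(\pt)=0$. Now suppose $\pt\in\intfixedpoints$, and let $n\geq 1$ be minimal with $\pt\in\fixnint$. By Lemma~\ref{l:one}, $f_k(\pt)=0$ whenever $n\nmid k$, so $f_{k_0}(\pt)$ can only be nonzero when $n\mid k_0$, say $k_0=j_0 n$. For each $c\in\T$ we have $0=\ch_{\pt,c}(a)=\sum_j f_{jn}(\pt)c^j$. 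Since $a\in\lone$, the sequence $(f_{jn}(\pt))_j$ is absolutely summable, so this is an absolutely convergent Fourier series on $\T$ that vanishes identically; by uniqueness of Fourier coefficients, $f_{jn}(\pt)=0$ for all $j$, in particular $f_{k_0}(\pt)=f_{j_0 n}(\pt)=0$. Thus $f_{k_0}(\pt)=0$ in every case, so $f_{k_0}=0$; as $k_0$ was arbitrary, $a=0$.

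I expect no serious obstacle here: the heavy lifting — the complete classification of characters — has already been done in Proposition~\ref{p:nine}, and semisimplicity is then a short pointwise argument combined with the uniqueness theorem for (absolutely convergent) Fourier series on $\T$. The one point that needs a little care is keeping the case analysis clean and making sure that the aperiodic/periodic-interior dichotomy is applied at the point $\pt$ with the correct minimal period $n$, and that absolute summability of $(f_{jn}(\pt))_j$ is invoked to justify applying the Fourier uniqueness theorem. An alternative, slightly slicker route to semisimplicity would be to invoke Theorem~\ref{t:topological_quotient} (that $\maxidealspacelonecomm$ is a topological quotient of $\topspace\times\T$) to produce a dense — hence separating, via Corollary~\ref{c:separating_lonecomm} — family of characters, but since Corollary~\ref{c:separating_lonecomm} is stated as a \emph{consequence} of this theorem, I prefer the self-contained pointwise computation above, which only uses Proposition~\ref{p:nine}.
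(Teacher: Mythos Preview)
Your proposal is correct and follows essentially the same route as the paper: hermiticity is read off directly from Proposition~\ref{p:nine}, and semisimplicity is proved by the same pointwise case analysis on whether $\pt\in\intfixedpoints$, using Lemma~\ref{l:one} for the non-divisible indices and the injectivity of the Fourier transform on $\ell^1(\Z)$ for the remaining ones. The only cosmetic difference is that you fix $k_0$ first and then vary $\pt$, whereas the paper fixes $\pt$ and shows all $f_k(\pt)$ vanish; the content is identical.
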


\begin{proof}
It is clear from Proposition~\ref{p:nine} that the spectrum of a self-adjoint element of $\lonecomm$ is real, i.e., that $\lonecomm$ is hermitian.  To show that it is semisimple, suppose $\sum_k f_k\delta^k\in\lonecomm$ and that $\ch(\sum_{k} f_k\delta^k)=0$, for all $\ch\in\maxidealspacelonecomm$. In that case, if $\pt\notin \intfixedpoints$, then $f_0(\pt)=\ch_{\pt}(\sum_k f_k\delta^k)=0$. If $n\neq 0$, then also $f_n(\pt)=0$, because otherwise $\pt\in\fix_{|n|}^\intsymbol(\homeo)$. Hence, for all $k\in\Z$, $f_k$ vanishes at the complement of $\intfixedpoints$. In order so show that all $f_k$ also vanish at an arbitrary $\pt\in\intfixedpoints$, let $n$ be the minimal $n\geq 1$ such that $\pt\in\fixnint$. Then, by Lemma~\ref{l:one}, we know that $f_m(\pt)=0$ if $n\nmid m$. To show that this is likewise true if $m=jn$, for arbitrary $j\in\Z$, we use that, for all $c\in\mathbb T$, $0=\ch_{\pt,c}(\sum_k f_k\delta^k)=\sum_j f_{jn}(\pt) c^j$. That is: the Fourier transform of the element $(\ldots,f_{-2n}(\pt),f_{-n}(\pt),f_0(\pt),f_n(\pt), f_{2n}(\pt),\ldots)$ of $\ell^1(\Z)$ is equal to zero. We conclude that $f_{jn}(\pt)=0$, for all $j\in\Z$, as was to be shown.
\
\end{proof}

We will now proceed to show that $\maxidealspacelonecomm$ is a topological quotient of $\topspace\times\T$. If $\pt\in\topspace$, and $z\in\T$, define $\surjmap_{\pt,z}:\lone\to\C$ by $\surjmap_{\pt,z}(\sum_k f_k \delta^k):=\sum_k f_k(\pt)z^k$, for $\sum_k f_k\delta^k\in\lonecomm$. Since $z\in\T$, this is well-defined. We claim that $\surjmap_{\pt,z}$ is, in fact, a character of $\lonecomm$. There are two cases two consider:
\begin{enumerate}[\upshape (i)]
\item If $\pt\notin\intfixedpoints$, then $f_k(\pt)=0$, for all $k\neq 0$. Hence, regardless of $z$, $\surjmap_{\pt,z}=\ch_{\pt}$ in this case, which we know to be a character of $\lonecomm$.
\item If $\pt\in\intfixedpoints$, we let $n$ be the minimal $n\geq 1$ such that $\pt\in\fixnint$. Then, by Lemma~\ref{l:one}, $f_m(\pt)=0$ if $n\nmid m$, so $\surjmap_{\pt,z}(\sum_k f_k\delta^k)=\sum_j f_{jn}(\pt)z^{jn}=\ch_{\pt,z^n}(\sum_k f_k \delta^k)$. We conclude that $\surjmap_{\pt,z}=\ch_{\pt,z^n}$ which, again, we already know to be a character of $\lonecomm$.
\end{enumerate}
We conclude that $\surjmap$, sending $(\pt,z)$ to $\surjmap_{\pt,z}$, does not only map $\topspace\times\T$ into $\maxidealspacelonecomm$, but also that it is surjective. In fact, we have the following.

\begin{thm}\label{t:topological_quotient}
The maximal ideal space $\maxidealspacelonecomm$ is the topological quotient of $\topspace\times\T$ via the surjective continuous map $\surjmap:\topspace\times\T\to\maxidealspacelonecomm$. The fibers are:
\begin{enumerate}[\upshape (i)]
\item for $\ch_{\pt}$, with $\pt\notin \intfixedpoints$: $\surjmap^{-1}(\{\ch_{\pt}\})=\{\pt\}\times\T$.
\item for $\ch_{\pt,c}$, with $\pt\in\intfixedpoints$, and $c\in\T$: $\surjmap^{-1}(\{\ch_{\pt,c}\})=\{(\pt,z) : z^n=c\}$, where $n$ is the minimal $n\geq 1$ such that $\pt\in\fixnint$.
\end{enumerate}
\end{thm}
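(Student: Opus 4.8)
The plan is to verify three things in turn: that $\surjmap$ is continuous for the Gelfand topology on $\maxidealspacelonecomm$, that its fibers are as claimed, and that it is consequently a topological quotient map. Surjectivity of $\surjmap$, together with the identities $\surjmap_{\pt,z}=\ch_{\pt}$ when $\pt\notin\intfixedpoints$ and $\surjmap_{\pt,z}=\ch_{\pt,z^n}$ when $\pt\in\fixnint$ with $n\geq 1$ minimal, has already been established in the discussion preceding the statement; these are the two facts I will lean on throughout.

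For continuity, recall that the Gelfand topology on $\maxidealspacelonecomm$ is the restriction of the weak-$*$ topology of $(\lonecomm)^*$, so it suffices to show that for each fixed $\ell=\sum_k f_k\delta^k\in\lonecomm$ the scalar function $(\pt,z)\mapsto\surjmap_{\pt,z}(\ell)=\sum_k f_k(\pt)z^k$ is continuous on $\topspace\times\T$. Each partial sum is plainly continuous, and since $|f_k(\pt)z^k|\leq\|f_k\|_\infty$ with $\sum_k\|f_k\|_\infty<\infty$, the series converges uniformly on $\topspace\times\T$ by the Weierstrass $M$-test; hence its sum is continuous. This gives continuity of $\surjmap$.

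Next I would compute the fibers by combining the two identities above with the fact, from Proposition~\ref{p:nine}, that the listed characters occur without repetition and that a character's restriction to $\coeffalg$ is $\ev_{\pt}$ for the relevant base point $\pt$. If $\pt\notin\intfixedpoints$, then $\surjmap_{\pt,z}=\ch_{\pt}$ for every $z\in\T$; and if $\surjmap_{\pt',z'}=\ch_{\pt}$, then restricting to $\coeffalg$ forces $\pt'=\pt$ (in particular $\pt'\notin\intfixedpoints$, so the alternative $\surjmap_{\pt',z'}=\ch_{\pt',c'}$ cannot arise). Hence $\surjmap^{-1}(\{\ch_{\pt}\})=\{\pt\}\times\T$. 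If $\pt\in\intfixedpoints$ with $n$ minimal such that $\pt\in\fixnint$, then $\surjmap_{\pt,z}=\ch_{\pt,z^n}$, and restricting any preimage to $\coeffalg$ again forces the base point to be $\pt$, hence forces the same $n$; uniqueness of the parametrisation in Proposition~\ref{p:nine} then shows that $\ch_{\pt,(z')^n}=\ch_{\pt,c}$ precisely when $(z')^n=c$, giving $\surjmap^{-1}(\{\ch_{\pt,c}\})=\{(\pt,z):z^n=c\}$.

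Finally, to conclude that $\maxidealspacelonecomm$ carries the quotient topology, I would use a soft topological argument rather than a direct comparison of topologies: $\topspace\times\T$ is compact, $\maxidealspacelonecomm$ is compact Hausdorff (being the maximal ideal space of the unital commutative Banach algebra $\lonecomm$), and $\surjmap$ is a continuous surjection between them, hence a closed map and therefore a quotient map. I do not expect a genuine obstacle here; the only points needing a little care are the uniform-convergence step underlying continuity and the bookkeeping with the non-repetition statement of Proposition~\ref{p:nine} in the fiber computation, while the compact-to-Hausdorff principle disposes of the quotient-topology claim with no further work.
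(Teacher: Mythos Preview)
Your proposal is correct and follows essentially the same approach as the paper: the fibers are read off from the preceding discussion and Proposition~\ref{p:nine}, continuity is obtained via uniform convergence of the defining series, and the quotient property follows from the compact-to-Hausdorff principle. The only cosmetic difference is that the paper writes out an explicit $\epsilon/2$ truncation estimate for continuity where you invoke the Weierstrass $M$-test, but these are the same argument.
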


\begin{proof}
All is clear from the above discussion and Proposition~\ref{p:nine}, except for the statement on the topological quotient. Since any quotient of $\topspace\times\T$ is compact, and $\maxidealspacelonecomm$ is Hausdorff, it is sufficient to show that $\surjmap$ is continuous. To this end, fix $(\pt,z_0)\in\topspace\times\T$ and $\sum_{k\in\Z} f_k\delta^k\in\lonecomm$, and let $\epsilon>0$ be given. There exists $N\geq 0$, such that $\Vert \sum_{k\in\Z} f_k\delta^k-\sum_{k=-N}^{N} f_k\delta^k)\Vert<\epsilon/4$.
Then, if $(\ptx,z)\in\topspace\times\T$, using the contractivity of characters of a Banach algebra, we find that
\begin{align*}
|\surjmap_{\pt,z_0}(\sum_{k\in\Z} f_k \delta^k)-\surjmap_{x,z}(\sum_{k\in\Z} f_k\delta^k)|&<\frac{\epsilon}{2} + |\surjmap_{\pt,z_0}(\sum_{k=-N}^{N} f_k \delta^k)-\surjmap_{x,z}(\sum_{k=-N}^{N} f_k\delta^k)|\\
&\leq \frac{\epsilon}{2}+ \sum_{k=-N}^{N}| f_k(\pt)z_0^k- f_k(\ptx)z^k|.
\end{align*}
It is clear that there exist open neighbourhoods $U_{\pt}$ of $\pt$, and $V_{z_0}$ of $z_0$, such that the finite summation in the above equation is less than $\epsilon/2$, for all $(\ptx,z)\in U_{\pt}\times V_{\pt}$.
\end{proof}

The continuity and surjectivity of $\surjmap$ in Theorem~\ref{t:topological_quotient} provide a convenient tool to generate dense subsets of $\maxidealspacelonecomm$. Indeed, $\psi(\tilde \topspace\times \tilde\T)$ will be such a set, whenever $\tilde \topspace$ is dense in $\topspace$, and $\tilde\T$ is dense  in $\T$. Moreover, since $\lonecomm$ is semisimple, $\psi(\tilde \topspace\times \tilde T)$ will then also separate the points of $\lonecomm$. Applying this with $\tilde\topspace$ equal to the set in the third part of Corollary~\ref{c:density_in_topspace}, and with $\tilde T=\T$, we obtain the following result. It will be instrumental in the proof of the uniqueness statement for projections in $\lone$ in Theorem~\ref{t:lone_projection_unique} and, when combined with the homeomorphism of maximal ideal spaces in Proposition~\ref{p:homeomorphism_of_max_ideal_spaces}, also for the uniqueness statement for projections in $\cstar$ in Theorem~\ref{t:cstar_projection_unique}.

\begin{cor}\label{c:separating_lonecomm}
The set, consisting of all $\ch_{\pt}$, for $\pt\in\aperpoints$, together with all $\ch_{\pt,c}$, for $\pt\in\intperpoints$ and $c\in\T$, is a dense subset of $\maxidealspacelonecomm$, which separates the points of $\lonecomm$.
\end{cor}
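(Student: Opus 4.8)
The plan is to read the statement off from the already-observed fact that $\surjmap\colon\topspace\times\T\to\maxidealspacelonecomm$ is a continuous surjection (Theorem~\ref{t:topological_quotient}), combined with the semisimplicity of $\lonecomm$ (Theorem~\ref{t:hermitian_and_semisimple}). First I would record the general principle that if $\tilde\topspace\subseteq\topspace$ is dense, then $\tilde\topspace\times\T$ is dense in $\topspace\times\T$, whence, $\surjmap$ being continuous and onto, $\maxidealspacelonecomm=\surjmap(\topspace\times\T)=\surjmap(\overline{\tilde\topspace\times\T})\subseteq\overline{\surjmap(\tilde\topspace\times\T)}$, so that $\surjmap(\tilde\topspace\times\T)$ is dense in $\maxidealspacelonecomm$. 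Next I would invoke semisimplicity: the Gelfand transform of $\lonecomm$ is injective, so for $a\neq b$ in $\lonecomm$ the continuous functions $\widehat a,\widehat b$ on the compact Hausdorff space $\maxidealspacelonecomm$ are distinct, hence already differ at some point of the dense set $\surjmap(\tilde\topspace\times\T)$; thus that dense set separates the points of $\lonecomm$.

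It then remains to make the choice $\tilde\topspace:=\aperpoints\cup\intperpoints$, which is dense in $\topspace$ by the third part of Corollary~\ref{c:density_in_topspace}, and to identify the image. Using the two cases from the discussion before the corollary: for $\pt\in\aperpoints$ one has $\pt\notin\intfixedpoints$, so $\surjmap_{\pt,z}=\ch_{\pt}$ for every $z\in\T$, contributing exactly the $\ch_{\pt}$ with $\pt\in\aperpoints$; for $\pt\in\intperpoints\subseteq\intfixedpoints$, with $n\geq 1$ minimal such that $\pt\in\fixnint$, one has $\surjmap_{\pt,z}=\ch_{\pt,z^n}$, and since $z\mapsto z^n$ maps $\T$ onto $\T$, letting $z$ run over $\T$ yields exactly the $\ch_{\pt,c}$ with $c\in\T$. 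Hence $\surjmap(\tilde\topspace\times\T)$ is precisely the set in the statement, and the two previous observations then finish the argument.

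I do not anticipate a serious obstacle, as the corollary is largely a repackaging of Theorems~\ref{t:topological_quotient} and~\ref{t:hermitian_and_semisimple} and the appendix density result. The only points needing care are: first, the bookkeeping that matches $\surjmap(\tilde\topspace\times\T)$ with the listed characters, for which Proposition~\ref{p:nine} is used to guarantee that this list is exhaustive and without repetitions; and second, the elementary but essential remark that $z\mapsto z^n$ is surjective on $\T$, which is why one should take the full circle $\T$ in the second coordinate, rather than merely a dense subset, in order to recover every $c\in\T$.
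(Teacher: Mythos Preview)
Your proposal is correct and follows essentially the same route as the paper: the paragraph preceding the corollary in the paper already spells out that $\surjmap(\tilde\topspace\times\tilde\T)$ is dense in $\maxidealspacelonecomm$ whenever $\tilde\topspace$ and $\tilde\T$ are dense, that semisimplicity then yields separation, and that one should take $\tilde\topspace$ to be the set in part~(3) of Corollary~\ref{c:density_in_topspace} and $\tilde\T=\T$. Your write-up is in fact slightly more detailed than the paper's, in that you explicitly verify the image $\surjmap(\tilde\topspace\times\T)$ coincides with the listed characters (using the surjectivity of $z\mapsto z^n$ on $\T$), a step the paper leaves implicit.
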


\section{Extending and restricting pure states}\label{s:extending_and_restricting}

In this section, we study the behaviour of pure states under restriction in the following chain of involutive algebras:
\begin{equation}\label{e:inclusions}
\xymatrix@1
{
\coeffalg\,\,\ar @{^{(}->}[r]&\,\,\lonecomm\,\,\ar @{^{(}->}[r]^{\text{dense}}&\,\,\cstarcomm\,\,\ar  @{^{(}->}[r]&\,\,\cstar.
}
\end{equation}
The results are summarised in Theorem~\ref{t:summarising_theorem} and will be applied in Section~\ref{s:enveloping_algebra_and_projections}.

As a preliminary remark, we note that $\lonecomm$ need not be a $C^*$-algebra, hence the standard extension theorem for (pure) states does not apply if this algebra is involved. This complicates the arguments, but in the end the the picture for the first two inclusions in \eqref{e:inclusions} turns out to be quite simple.

Beginning with the inclusion $\coeffalg\hookrightarrow\lonecomm$, we recall from Proposition~\ref{p:nine} that all characters of $\lonecomm$ are hermitian, i.e., that the maximal ideal space $\maxidealspacelonecomm$ of $\lonecomm$ coincides with its pure state space. Furthermore, Proposition~\ref{p:nine} also shows that restriction not only maps the pure state space $\maxidealspacelonecomm$ into the pure state space $\Delta(\coeffalg)$, which is obvious since a hermitian character restricts to a hermitian character, but actually onto $\Delta(\coeffalg)$. Moreover, it describes explicitly the fibers of this continuous surjection. Hence the situation for the inclusion $\coeffalg\hookrightarrow\lonecomm$ is already clear.

For the inclusion $\lonecomm\hookrightarrow\cstarcomm$ it is likewise obvious that restriction yields an injective continuous map from the pure state space $\maxidealspacecstarcomm$ into the pure state space $\maxidealspacelonecomm$, but it is not obvious that this map should be surjective. Nevertheless, restriction even yields a homeomorphism between these two pure state spaces (cf.\ Proposition~\ref{p:homeomorphism_of_max_ideal_spaces}), and this will later (cf.\ Theorem~\ref{t:enveloping_algebra}) be improved by showing that $\cstarcomm$ is actually the enveloping $C^*$-algebra of $\lonecomm$.

The key to understanding the restriction map for the inclusion $\lonecomm\hookrightarrow\cstarcomm$ lies in understanding part of the restriction map for the inclusion $\cstarcomm\hookrightarrow\cstar$. To be precise: for $\pt\in\topspace$, we will be concerned with the pure state extensions of the pure state $\ev_{\pt}$ on $\coeffalg$ to $\cstar$, and determine how these extensions restrict to $\cstarcomm$. Although it is not a priori evident, they restrict to pure states. On the other hand, it is a priori evident that all pure states of $\cstarcomm$ can be obtained in this way. Indeed, a pure state $\ch$ of $\cstarcomm$ is a character, so that its restriction to $\coeffalg$ is of the form $\ev_{\pt}$, for some $\pt$ in $\topspace$. Then each pure state extension of $\ch$ to $\cstar$ is also a pure state extension of $\ev_{\pt}$, showing that $\ch$ can be obtained as described.

\medskip
We will now supply the detailed arguments needed to understand the behaviour of pure states for the inclusion $\lonecomm\hookrightarrow\cstarcomm$.  They are based on an explicit description, for all $\pt\in \topspace$, of the restriction of each pure state extension of $\ev_{\pt}$ to $\cstar$ to the dense subalgebra $\fscomm$ of $\cstarcomm$. We start with a description of these pure state extensions to $\cstar$ and the corresponding GNS-representations, referring to \cite[\S 4]{tomiyama_notes_one} for further details and proofs.

First of all, if $\pt\in\aperpoints$, then there is a unique pure state extension of $\ev_{\pt}$ to $\cstar$, which we denote by $\st_{\pt}$. The Hilbert space for the GNS-representation $\pi_{\pt}$ corresponding to $\st_{\pt}$ has an orthonormal basis $(e_n)_{n\in\Z}$, and the representation itself is determined by $\pi_{\pt}(\delta)e_n=e_{n+1}$, for $n\in\Z$, and $\pi_{\pt}(f)e_n=f(\homeo^n \pt)e_n$, for $n\in\Z$. The vector $e_0$ reproduces the state $\st_{\pt}$ of $\cstar$.

If $\pt\in\perpoints$, say $\pt\in\perpnew$ $(p\geq 1)$, then the pure state extensions of $\ev_{\pt}$ to $\cstar$ are in bijection with the points in $\T$, and we denote these pure states of $\cstar$ by $\st_{\pt,\lambda}$, for $\lambda\in\T$. The Hilbert space for the GNS-representation $\pi_{\pt,\lambda}$ corresponding to $\st_{\pt,\lambda}$ has an orthonormal basis $\{e_0,\ldots,e_{p-1}\}$, $\pi_{\pt,\lambda}(\delta)$ is represented with respect to this basis by the matrix
\[
\left(
\begin{array}{ccccc}
0 & 0 & \ldots & 0 & \lambda \\
1 & 0 & \ldots & 0 & 0 \\
0 & 1 & \ldots & 0&0\\
\vdots & \vdots & \ddots &\vdots &\vdots \\
0 & 0 & \ldots & 1&0
\end{array}\right),
\]
and, for $f\in\coeffalg$, $\pi_{\pt,\lambda}(f)$ is represented with respect to this basis by the matrix
\[
\left(
\begin{array}{cccc}
f(\pt) & 0 & \ldots & 0 \\
0 & f (\homeo \pt) & \ldots & 0 \\
\vdots & \vdots & \ddots & \vdots \\
0 & 0 & \ldots & f(\homeo^{p-1} \pt)
\end{array} \right).
\]
The vector $e_0$ reproduces the state $\st_{\pt,\lambda}$ of $\cstar$.

Next, we describe the restriction of these pure states $\st_{\pt}$, for $\pt\in\aperpoints$, and of $\st_{\pt,\lambda}$, for $\pt\in\perpoints$ and $\lambda\in\T$, to $\fscomm$. By density, this will then enable us to obtain information about the restriction to $\cstarcomm$. There are three cases to consider.

\medskip
\noindent\emph{Case 1:} $\pt\in\aperpoints$.
\\ It is immediate from the GNS-model that $\st_{\pt}(f_k\delta^k)=0$, for all $f_k\in\coeffalg$, and all $k\neq 0$. Since $\st_{\pt}(f)=f(\pt)$, for $f\in\coeffalg$, we see that $\st_{\pt}(\sum_k f_k\delta^k)=f_0(\pt)=\ch_{\pt}(\sum_k f_k\delta^k)$, for all $\sum_k f_k\delta^k\in\fscomm=\fscomm$, where $\ch_{\pt}$ is as in the first part of Lemma~\ref{l:seven}. In particular, $\st_{\pt}$ is multiplicative on $\fscomm$. By continuity we conclude that the restriction of $\st_{\pt}$ to $\cstarcomm$ is likewise a character, hence a pure state of $\cstarcomm$.

\medskip
\noindent\emph{Case 2:} $\pt\in\perpoints$, with $\pt\in\intfixedpoints$; $\lambda\in\T$ arbitrary.
\\Let $n$ be the minimal $n\geq 1$ such that $\pt\in\fixnint$, and let $p$ be the period of $\pt$, so that $p\mid n$. As a preparation, we observe that the GNS-model implies that $\st_{\pt,\lambda}(f_k\delta^k)=0$, for all $f_k\in\coeffalg$, and all $k\in\Z$ with $p\nmid k$. Furthermore, if $p\mid k$, then $\st_{\pt,\lambda}(f_k\delta^k)=f(\pt)\lambda^{\frac{k}{p}}$. Turning to $\cstarcomm$, suppose $k\in\Z$, $f_k\in\coeffalg$, and $f_k\delta^k\in\fscomm$, so that $\supp(f_k)\subset\fixk$. According to our preparations, $\st_{\pt,\lambda}(f_k\delta^k)=0$ if $p\nmid k$, and $\st_{\pt,\lambda}(f_k\delta^k)=f_k(\pt)\lambda^{\frac{k}{p}}$ if $p\mid k$. But, since $\supp(f_k)\subset\fixk$, we know in addition from Lemma~\ref{l:one} that $f_k(\pt)=0$ if $n\nmid k$. Hence
\[
\st_{\pt,\lambda}(f_k\delta^k)=
\begin{cases}
f_k(\pt)\lambda^{\frac{k}{p}}&\textup{if }n\mid k,\\
0&\textup{otherwise}.
\end{cases}
\]
Therefore, if $\sum_k f_k\delta^k\in\fscomm$, then
\begin{align*}
\st_{\pt,\lambda}\left(\sum_k f_k \delta^k\right)&=\st_{\pt,\lambda}\left(\sum_j f_{jn}\delta^{jn}\right)\\
&=\sum_j f_{jn}(\pt)\lambda^{\frac{jn}{p}}\\
&=\ch_{\pt,\lambda^{\frac{n}{p}}}\left(\sum_k f_k\delta^k\right),
\end{align*}
where, for $c\in\Ccross$, $\ch_{\pt,c}$ is as in the second part of Lemma~\ref{l:seven}.
As in the first case, since we know $\ch_{\pt,\lambda^{\frac{n}{p}}}$ to be multiplicative on $\fscomm$, we conclude that the restriction of $\st_{\pt,\lambda}$ to $\cstarcomm$ is, in fact, a pure state of $\cstarcomm$.

\medskip
\noindent\emph{Case 3:} $\pt\in\perpoints$, with $\pt\notin\intfixedpoints$; $\lambda\in\T$ arbitrary.
\\ If $p\geq 1$ is the period of $\pt$, then, as in the second case, the GNS-model shows that $\st_{\pt,\lambda}(f_k\delta^k)=0$, for all $f_k\in\coeffalg$, and all $k\in\Z$ such that $p\nmid k$, and also that $\st_{\pt,\lambda}(f_k\delta^k)=f_k(\pt)\lambda^{\frac{k}{p}}$ if $p\mid k$. However, if $k\in\Z$, $f_k\in\coeffalg$, and $f_k\delta^k\in\fscomm$, so that $\supp(f_k)\subset\fixk$, then we must have $f_k(\pt)=0$ if $k\neq 0$, since $\pt\notin\intfixedpoints$. All in all, we conclude that, for $\sum_k f_k\delta^k\in\fscomm$, $\st_{\pt,\lambda}(\sum_k f_k\delta^k)=f_0(\pt)=\ch_{\pt}(\sum_k f_k\delta^k)$, where $\ch_{\pt}$ is as in the first part of Lemma~\ref{l:seven}. As in the previous two cases, we conclude that $\st_{\pt,\lambda_0}$ is multiplicative on $\cstarcomm$, hence that the restriction of $\st_{\pt,\lambda_0}$ to $\cstarcomm$ is, in fact, a pure state of $\cstarcomm$.

\medskip
As a consequence of the above three cases, we see that the pure states $\st_{\pt}$, for $\pt\in\aperpoints$, and $\st_{\pt,\lambda}$, for $\pt\in\perpoints$ and $\lambda\in\T$, restrict to pure states of $\cstarcomm$, as announced earlier. As a further consequence, we have the following result, which will be improved in Theorem~\ref{t:enveloping_algebra}.

\begin{prop}\label{p:homeomorphism_of_max_ideal_spaces}
The map, given by restricting a character of $\cstarcomm$ to $\lonecomm$, yields a homeomorphism between the maximal ideal space \ulb i.e., the pure state space\urb\ of the commutative Banach algebra $\cstarcomm$ and the maximal ideal space \ulb which coincides with the pure state space\urb\ of $\lonecomm$.
\end{prop}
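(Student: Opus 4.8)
The plan is to exhibit the restriction map $r\colon\maxidealspacecstarcomm\to\maxidealspacelonecomm$ as a continuous bijection between compact Hausdorff spaces; the homeomorphism assertion then follows automatically. The only point requiring genuine work is surjectivity of $r$, and that is already contained in the three-case analysis carried out above.

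First, the routine properties. Since $\lonecomm$ is a \emph{unital} subalgebra of $\cstarcomm$ and characters are unital, the restriction to $\lonecomm$ of a character of $\cstarcomm$ is again a non-zero multiplicative functional, so $r$ is well-defined into $\maxidealspacelonecomm$. It is injective: if two characters of $\cstarcomm$ agree on $\lonecomm$, then, being continuous and agreeing on the dense subalgebra $\lonecomm$ of $\cstarcomm$, they agree on all of $\cstarcomm$. It is continuous for the Gelfand topologies: for each fixed $\ell\in\lonecomm\subset\cstarcomm$ the map $\ch\mapsto\ch(\ell)$ is continuous on $\maxidealspacecstarcomm$, and these maps generate the topology of $\maxidealspacelonecomm$.

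For surjectivity, let $\ch\in\maxidealspacelonecomm$. By Proposition~\ref{p:nine}, $\ch\rest_\coeffalg=\ev_{\pt}$ for a unique $\pt\in\topspace$, and $\ch=\ch_{\pt}$ if $\pt\notin\intfixedpoints$, while $\ch=\ch_{\pt,c}$ for some $c\in\T$ if $\pt\in\intfixedpoints$. If $\pt\in\aperpoints$, Case~1 shows that the restriction of the pure state $\st_{\pt}$ of $\cstar$ to $\cstarcomm$ is a character which agrees with $\ch_{\pt}=\ch$ on the dense subalgebra $\fscomm$, hence on $\lonecomm$. If $\pt\in\perpoints$ with $\pt\notin\intfixedpoints$, Case~3 gives the same conclusion with $\st_{\pt,\lambda}$ (for any $\lambda\in\T$) in place of $\st_{\pt}$. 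Finally, if $\pt\in\intfixedpoints$, let $p$ be the period of $\pt$ and $n$ the minimal integer $\geq 1$ with $\pt\in\fixnint$, so $p\mid n$; choosing $\lambda\in\T$ with $\lambda^{n/p}=c$ (possible since $z\mapsto z^{n/p}$ is onto $\T$), Case~2 shows that the restriction of $\st_{\pt,\lambda}$ to $\cstarcomm$ is a character agreeing with $\ch_{\pt,\lambda^{n/p}}=\ch_{\pt,c}=\ch$ on $\fscomm$, hence on $\lonecomm$. In every case $\ch$ is the restriction of a character of $\cstarcomm$, so $r$ is onto.

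It remains to invoke compactness: $\maxidealspacecstarcomm$ is compact and $\maxidealspacelonecomm$ is Hausdorff, both being maximal ideal spaces of commutative unital Banach algebras, so the continuous bijection $r$ is a homeomorphism. The main obstacle is thus surjectivity, i.e.\ the fact that every character of $\lonecomm$ extends to a character of $\cstarcomm$; but since the restriction behaviour of the pure states $\st_{\pt}$ and $\st_{\pt,\lambda}$ of $\cstar$ has already been computed on $\fscomm$, this amounts to assembling those computations together with the density of $\fscomm$, and no further difficulty arises.
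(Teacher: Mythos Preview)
Your proof is correct and follows essentially the same route as the paper: continuity and injectivity of the restriction map are dispatched by density of $\lonecomm$ in $\cstarcomm$, surjectivity is obtained by matching the three cases for the restrictions of $\st_{\pt}$ and $\st_{\pt,\lambda}$ against the description of $\maxidealspacelonecomm$ in Proposition~\ref{p:nine}, and the homeomorphism conclusion comes from the compact--Hausdorff argument. The only difference is that you spell out the compactness step explicitly, whereas the paper leaves it implicit in the phrase ``it is sufficient to show surjectivity.''
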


\begin{proof}
The restriction map is obviously continuous, and it is also injective, since $\fscomm=\fscomm$ is dense in $\cstarcomm$. Hence it is sufficient to show surjectivity, and for this we combine the description of $\maxidealspacelonecomm$ in Proposition~\ref{p:nine} with the results in the three cases preceding the present proposition.

In the first case, if $\pt\in\aperpoints$, then the density of $\fscomm$ implies that, when restricting the restriction of $\st_{\pt}$ to $\cstarcomm$ further to $\lonecomm$, one obtains $\ch_{\pt}$ as in the first part of Proposition~\ref{p:nine}.

In the second case, if $\lambda\in\T$, and $\pt\in\perpoints$, with $\pt\in\intfixedpoints$ of period $p$, then, when restricting the restriction of $\st_{\pt,\lambda}$ to $\cstarcomm$ further to $\lonecomm$, one obtains $\ch_{\pt,\lambda^{\frac{n}{p}}}$, where, for $c\in\T$, $\ch_{\pt,c}$ is as in the second part of Proposition~\ref{p:nine}.

In the third case, if $\lambda$ in $\T$, and $\pt\in\perpoints$, with $\pt\notin\intfixedpoints$, then, when restricting the restriction of $\st_{\pt,\lambda}$ to $\cstarcomm$ further to $\lonecomm$, one obtains $\ch_{\pt}$ as in the first part of Proposition~\ref{p:nine}.

All in all, we see that the characters in the first part of Proposition~\ref{p:nine} are all obtained from the combined first and third cases, and that the characters in the second part of Proposition~\ref{p:nine} are all obtained from the second case.
\end{proof}

In view of Proposition~\ref{p:homeomorphism_of_max_ideal_spaces}, we will identify the pure states (characters) of $\cstarcomm$ and $\lonecomm$, writing $\ch_{\pt}$ (for $\pt\notin\intperpoints$) and $\ch_{\pt,c}$ (for $\pt\in\intperpoints$ and $c\in\T$) for both.

We summarise most of our findings in the following theorem, which also describes the fibers under restriction maps as we have, in fact, determined them during the discussion of the Cases~1 through~3 and the proof of Proposition~\ref{p:homeomorphism_of_max_ideal_spaces}.

\begin{thm}\label{t:summarising_theorem}\quad
\begin{enumerate}[\upshape (i)]
\item The pure state space of $\lonecomm$ and its maximal ideal space $\maxidealspacelonecomm$ coincide. Restricting pure states of $\lonecomm$ to $\coeffalg$ yields a continuous map from the pure state space  $\maxidealspacelonecomm$ onto the pure state space $\Delta(\coeffalg)$, and the fibers of this surjection are given in Proposition~\ref{p:nine}.

\item Restricting pure states of $\cstarcomm$ to $\lonecomm$ yields a homeomorphism between the pure state spaces $\maxidealspacecstarcomm$ and $\maxidealspacelonecomm$.

\item If $\pt\in \topspace$, then the restriction of each pure state extensions of $\ev_{\pt}$ to $\cstar$ to $\cstarcomm$ is a pure state of $\cstarcomm$.

\item The following diagrams describe, depending on the dynamical properties of $\pt\in\topspace$, how the pure state extensions of $\ev_{\pt}$ to $\cstar$ restrict down the chain in \eqref{e:inclusions}.
    \\ In these diagrams, the pure state at the top of the right hand side is the general pure state extension of $\ev_{\pt}$ to $\cstar$, and where the cardinality of the fibers has also been indicated.

\medskip
\noindent\emph{Case 1:} $\pt\in\aperpoints$.
\begin{equation*}
\xymatrix
{
\cstar&\quad&\st_{\pt}\ar[d]^{1-1}\\
\cstarcomm\ar @{^{(}->}[u]&\quad&\ch_{\pt}\ar[d]^{1-1}\\
\lonecomm\ar @{^{(}->}[u]^{\text{\textup{dense}}}&\quad&\ch_{\pt}\ar[d]^{1-1}\\
\coeffalg\ar @{^{(}->}[u]&\quad&\ev_{\pt}
}
\end{equation*}

\medskip
\noindent\emph{Case 2:} $\pt\in\perpoints$, \emph{with }$\pt\in\intfixedpoints$; $\lambda\in\T$\textup{ arbitrary}.
\\Let $n$ be the minimal $n\geq 1$ such that $\pt\in\fixnint$, and let $p\geq 1$ be the period of $\pt$.
\begin{equation*}
\xymatrix
{
\cstar&\quad&\st_{\pt,\lambda}\ar[d]^{\frac{n}{p}\textup{ to }1}\\
\cstarcomm\ar @{^{(}->}[u]&\quad&\ch_{\pt,\lambda^{\frac{n}{p}}}\ar[d]^{1-1}\\
\lonecomm\ar @{^{(}->}[u]^{\text{\textup{dense}}}&\quad&\ch_{\pt,\lambda^{\frac{n}{p}}}\ar[d]^{\T\textup{ to }1}\\
\coeffalg\ar @{^{(}->}[u]&\quad&\ev_{\pt}
}
\end{equation*}

\medskip
\noindent\emph{Case 3:} $\pt\in\perpoints$, \emph{with }$\pt\notin\intfixedpoints$; $\lambda\in\T$\textup{ arbitrary}.
\begin{equation*}
\xymatrix
{
\cstar&\quad&\st_{\pt,\lambda}\ar[d]^{\T\textup{ to }1}\\
\cstarcomm\ar @{^{(}->}[u]&\quad&\ch_{\pt}\ar[d]^{1-1}\\
\lonecomm\ar @{^{(}->}[u]^{\text{\textup{dense}}}&\quad&\ch_{\pt}\ar[d]^{1-1}\\
\coeffalg\ar @{^{(}->}[u]&\quad&\ev_{\pt}
}
\end{equation*}

\end{enumerate}
\end{thm}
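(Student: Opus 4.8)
The plan is to observe that essentially all the work has already been carried out in this section, so the proof consists of assembling the pieces. Part~(i)---the coincidence of the pure state space with the maximal ideal space $\maxidealspacelonecomm$, the surjectivity onto $\Delta(\coeffalg)$ of the restriction map, and the description of its fibers---is immediate from Proposition~\ref{p:nine}, which states precisely this (continuity of the restriction map being a general fact for Gelfand spectra, and also readable off from Theorem~\ref{t:topological_quotient}). Part~(ii) is exactly the content of Proposition~\ref{p:homeomorphism_of_max_ideal_spaces}. Part~(iii) is what was established during the discussion of Cases~1, 2 and~3 preceding that proposition: in each case the restriction of a pure state extension of $\ev_{\pt}$ to $\cstar$ was shown, via the GNS-model, to be multiplicative on the dense subalgebra $\fscomm$ of $\cstarcomm$, hence a character of $\cstarcomm$ by continuity, i.e., a pure state.

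For part~(iv) I would, for each of the three dynamical possibilities for $\pt$, trace a general pure state extension of $\ev_{\pt}$ to $\cstar$ down the chain~\eqref{e:inclusions}, justifying each of the three stacked arrows and counting the fiber at each stage. The arrow $\cstarcomm\to\lonecomm$ is a homeomorphism by part~(ii), hence always $1$-$1$. The arrow $\lonecomm\to\coeffalg$ and its fibers are given by Proposition~\ref{p:nine}: over $\ev_{\pt}$ with $\pt\notin\intfixedpoints$ there is the single character $\ch_{\pt}$, so that arrow is $1$-$1$, while over $\ev_{\pt}$ with $\pt\in\intfixedpoints$ the fiber is $\{\ch_{\pt,c}:c\in\T\}$, so that arrow is $\T$-to-$1$. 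For the arrow $\cstar\to\cstarcomm$ I would invoke the classification of pure state extensions of $\ev_{\pt}$ to $\cstar$ recalled at the start of the section, together with the explicit restriction formulas obtained in Cases~1--3: when $\pt\in\aperpoints$ there is a unique extension $\st_{\pt}$, giving a $1$-$1$ arrow; when $\pt\in\perpoints$ the extensions are $\{\st_{\pt,\lambda}:\lambda\in\T\}$, and if $\pt\notin\intfixedpoints$ they all restrict to $\ch_{\pt}$, giving a $\T$-to-$1$ arrow, whereas if $\pt\in\intfixedpoints$ then $\st_{\pt,\lambda}$ restricts to $\ch_{\pt,\lambda^{n/p}}$, with $n,p$ as in Case~2.

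The one point needing a small extra remark is the fiber cardinality $n/p$ of the arrow $\cstar\to\cstarcomm$ in Case~2. Here I would note that $p\mid n$, since a point of period $p$ lying in $\fixnint$ in particular lies in $\fixn$, so that $n/p$ is a positive integer, and that $z\mapsto z^{n/p}$ is a $(n/p)$-to-$1$ map of $\T$ onto itself; combined with the formula $\st_{\pt,\lambda}\rest_{\cstarcomm}=\ch_{\pt,\lambda^{n/p}}$ from Case~2 and the fact that the $\ch_{\pt,c}$ are pairwise distinct for distinct $c\in\T$ (part of Proposition~\ref{p:nine}), this shows that the fiber of $\cstar\to\cstarcomm$ over $\ch_{\pt,c}$ is $\{\st_{\pt,\lambda}:\lambda^{n/p}=c\}$, of cardinality $n/p$. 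With the three arrows in each diagram thus justified, the remaining composite counts (for instance the overall $\T$-to-$1$ from $\cstar$ down to $\coeffalg$ that appears in all three cases) follow at once.

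I do not expect a genuine obstacle here: the statement is a bookkeeping summary of results already proved, and the only mild subtlety is keeping the three stacked restriction maps and their fibers correctly aligned in each of the three dynamical cases, together with the elementary $z\mapsto z^{n/p}$ count in Case~2.
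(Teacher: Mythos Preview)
Your proposal is correct and matches the paper's approach exactly: the paper presents this theorem explicitly as a summary (``We summarise most of our findings in the following theorem\ldots''), giving no separate proof, and you correctly identify Proposition~\ref{p:nine}, Proposition~\ref{p:homeomorphism_of_max_ideal_spaces}, and the discussion of Cases~1--3 as the sources for parts~(i)--(iv) respectively. Your extra remark on the $n/p$ fiber count in Case~2 is a welcome clarification that the paper leaves implicit.
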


The following result will be needed in the proof of the uniqueness statement for projections in Theorem~\ref{t:cstar_projection_unique}.

\begin{cor}\label{c:unique_extension}\quad
\begin{enumerate}[\upshape (i)]
\item For $\pt\in\aperpoints$, the pure state $\ch_{\pt}$ of $\cstarcomm$ has only $\st_{\pt}$ as state extension to $\cstar$.
\item For $\pt\in\intperpoints$, and $c\in\T$, the pure state $\ch_{\pt,c}$ of $\cstarcomm$ has only $\st_{\pt,c}$ as state extension to $\cstar$.
\end{enumerate}
\end{cor}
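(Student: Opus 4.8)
The plan is to reduce the uniqueness of the state extension to the uniqueness of the \emph{pure} state extension, which has already been recorded in the discussion preceding the corollary. First I would observe that a state extension of a pure state of a commutative Banach $\sp\ast$-subalgebra need not a priori be pure, so the claim is genuinely about arbitrary states, not just pure ones. The key structural input is Theorem~\ref{t:summarising_theorem}(iii): every pure state of $\cstarcomm$ arises as the restriction of a pure state extension of some $\ev_{\pt}$ on $\coeffalg$, and conversely, by the GNS-description recalled before Case~1, the pure state extensions of $\ev_{\pt}$ to $\cstar$ are exactly $\st_{\pt}$ (if $\pt\in\aperpoints$) or the family $\st_{\pt,\lambda}$, $\lambda\in\T$ (if $\pt\in\perpoints$ of period $p$). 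So the strategy is: take an arbitrary state $\st$ of $\cstar$ extending $\ch_{\pt}$ (resp.\ $\ch_{\pt,c}$), restrict it further to $\coeffalg$, identify which $\ev_{\pt}$ it extends, and then use that $\st\rest_\coeffalg=\ev_\pt$ is pure together with a purity-transfer argument to conclude $\st$ itself is pure, after which the known classification pins it down.

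The main obstacle is establishing that $\st$ must be pure. For $C^*$-algebras this is standard: if $A\subset B$ are $C^*$-algebras, $\st$ is a state of $B$, and $\st\rest_A$ is pure, then $\st$ is pure provided $A$ contains an approximate identity for $B$ — but more relevantly, in our situation $\cstar$ is a $C^*$-algebra and $\coeffalg$ is a $C^*$-subalgebra, so the correct tool is: the state $\st$ of $\cstar$ restricts to the pure state $\ev_\pt$ of $\coeffalg$, and one argues via the GNS-representation $(\pi_\st, H_\st, \xi_\st)$ that $\pi_\st\rest_{\coeffalg}$ already acts irreducibly-enough on the cyclic vector. Concretely, $\ev_{\pt}$ being a character of $\coeffalg$ means $\pi_\st(f)\xi_\st = f(\pt)\xi_\st$ for $f\in\coeffalg$, i.e.\ $\xi_\st$ is a joint eigenvector for $\pi_\st(\coeffalg)$; since $\coeffalg$ is maximal abelian in $\cstar$ is \emph{not} what we have, but we do know $\cstarcomm$ is the commutant of $\coeffalg$, and $\st\rest_{\cstarcomm}=\ch_{\pt}$ (resp.\ $\ch_{\pt,c}$) is a character of $\cstarcomm$, so $\xi_\st$ is also a joint eigenvector for $\pi_\st(\cstarcomm)$. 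The plan is then to show that the pair $(\coeffalg,\cstarcomm)$ generates enough of $\cstar$ — more precisely, that the cyclic representation $\pi_\st$ generated by such a $\xi_\st$ must coincide with one of the irreducible GNS-models $\pi_{\pt}$ or $\pi_{\pt,\lambda}$ — by directly computing, using the Ces\`aro-convergence of $c = \sum_k c(k)\delta^k$ for $c\in\cstar$, that the numbers $\st(f\delta^k)=\langle \pi_\st(f\delta^k)\xi_\st,\xi_\st\rangle$ are forced. Indeed $\st(f\delta^k) = \st(f\cdot\delta^k)$, and by Cauchy--Schwarz $|\st(f\delta^k)|^2 \leq \st(f\delta^k(f\delta^k)^*)\,\st(1)=\st(f\overline{f})=|f(\pt)|^2$; combined with the relation $\delta^k\coeffalg\delta^{-k}\subset\coeffalg$ one obtains, exactly as in the analysis of the three cases, that $\st(f\delta^k)=0$ unless $\supp(f)$ meets the appropriate fixed-point set, and that on the relevant indices $\st$ is determined by a single parameter in $\T$.

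With that in hand I would finish as follows. In case (i), $\pt\in\aperpoints$: the computation forces $\st(f\delta^k)=0$ for all $k\neq 0$ (there is no room for a parameter since $\pt$ lies in no $\fixnint$), hence $\st$ agrees with $\st_{\pt}$ on $\fs$, which is dense in $\cstar$, so $\st=\st_{\pt}$ by continuity. In case (ii), $\pt\in\intperpoints$ with $\ch_{\pt,c}=\st\rest_{\cstarcomm}$: let $n\geq 1$ be minimal with $\pt\in\fixnint$ and $p$ the period; the computation forces $\st(f\delta^k)=0$ unless $p\mid k$, and $\st(f_k\delta^k)=f_k(\pt)\mu^{k/p}$ for a single $\mu\in\T$ with $\mu^{n/p}=c$ (the constraint coming from evaluating $\st$ on $f_0\delta^n\in\fscomm$ where $\supp f_0\subset\fixnint$, $f_0(\pt)=1$, which gives $\mu^{n/p}=\ch_{\pt,c}(f_0\delta^n)=c$); but then $\st$ agrees with $\st_{\pt,\mu}$ on $\fs$, hence $\st=\st_{\pt,\mu}$, and since $\st_{\pt,\mu}\rest_{\cstarcomm}=\ch_{\pt,\mu^{n/p}}=\ch_{\pt,c}$ by Case~2 of the preceding discussion and $c\mapsto\st_{\pt,c}$ is (as stated) a bijection onto the pure state extensions, we must have $\mu=c$, giving $\st=\st_{\pt,c}$. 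The one delicate point to get right is the claim ``$\st$ is determined by a single parameter'': it requires noting that once $\st(f_1\delta^{p})$ is known for one suitable $f_1$, multiplicativity of $\st\rest_{\cstarcomm}$ and the relation $(f\delta^{p})^j = (\text{something})\delta^{jp}$ propagate it to all powers, exactly mirroring the argument in Lemma~\ref{l:five}.
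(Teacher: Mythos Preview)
Your approach is quite different from the paper's, and considerably more elaborate. The paper's proof is essentially two lines: since the set of state extensions of a pure state of a $C^*$-subalgebra is a weak-$*$ compact convex face of the state space whose extreme points are pure states of the ambient algebra, it suffices (by Krein--Milman) to know that there is a unique \emph{pure} state extension. For part (i) this is immediate from Case~1 of Theorem~\ref{t:summarising_theorem}, and for part (ii) the paper simply observes that when $\pt\in\perpnewint$ the minimal $n\geq 1$ with $\pt\in\fixnint$ is $p$ itself, so the top arrow in the Case~2 diagram is a bijection and $\ch_{\pt,c}$ again has a unique pure state extension. You bypass the Krein--Milman reduction entirely and instead try to compute an arbitrary state extension directly on $\fs$; that route can be made to work, but it costs you the GNS eigenvector--orthogonality argument (to force $\st(f\delta^k)=0$ when $p\nmid k$) plus the Cauchy--Schwarz reduction, all of which the paper avoids.

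There is also a genuine gap in your case (ii). You correctly arrive at $\st=\st_{\pt,\mu}$ with $\mu^{n/p}=c$, but then assert ``we must have $\mu=c$'' by appealing to the bijection $\lambda\mapsto\st_{\pt,\lambda}$. That bijection is between $\T$ and the pure state extensions of $\ev_{\pt}$; it says nothing about injectivity of the restriction map to $\cstarcomm$, and so does not rule out other $\mu$ with $\mu^{n/p}=c$. What is missing is precisely the paper's key observation: for $\pt\in\perpnewint$ one has $n=p$ (since $\perpnewint\subset\fixpint$ gives $n\le p$, while $\pt\notin\fixm$ for $1\le m<p$ gives $n\ge p$), whence $\mu^{n/p}=\mu=c$ and the ambiguity disappears. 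Once you insert this, your computation does go through --- but at that point you have essentially reproduced the paper's argument with a lot of extra machinery wrapped around it. Your parenthetical in case (i), ``there is no room for a parameter since $\pt$ lies in no $\fixnint$'', is also the wrong justification: what actually forces $\st(f\delta^k)=0$ for $k\neq 0$ is that $\homeo^k\pt\neq\pt$, via the orthogonality of the $\pi_\st(\coeffalg)$-eigenvectors $\xi_\st$ and $\pi_\st(\delta^k)\xi_\st$.
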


\begin{proof}
As to the first part, it is immediate from Case 1 in Theorem~\ref{t:summarising_theorem} that $\ch_{\pt}$ has only one pure state extension to $\cstar$, hence also only one state extension. For the second part, Case~2 of Theorem~\ref{t:summarising_theorem} applies. If $p$ is the period of $\pt$, then clearly the minimal $n\geq 1$ such that $\pt\in\fixnint$ is $p$ itself, so that the top arrow in the right hand side of the diagram for Case~2 is injective. Therefore, $\ch_{\pt,c}$ has only one pure state extension to $\cstar$, hence also only one state extension.
\end{proof}

\begin{rem}\label{r:separating_cstarcomm}
 \quad\begin{enumerate}[\upshape (i)]
 \item Corollary~\ref{c:separating_lonecomm} and Proposition~\ref{p:homeomorphism_of_max_ideal_spaces} imply that the pure states of $\cstarcomm$ in the first part of Corollary~\ref{c:unique_extension}, together with those in the second part, constitute a dense subset in $\maxidealspacecstarcomm$, hence separate the points of $\cstarcomm$. As Corollary~\ref{c:unique_extension} itself, this will be used in the proof of the uniqueness statement for projections in Theorem~\ref{t:cstar_projection_unique}.
 \item More generally, in view of the homeomorphism in Proposition~\ref{p:homeomorphism_of_max_ideal_spaces}, the paragraph preceding Corollary~\ref{c:separating_lonecomm} is now seen to provide a tool to generate dense (hence separating) subsets of $\maxidealspacecstarcomm$.
 \item In \cite[Lemma~3.5]{svensson_tomiyama} it was observed that the (unique) pure state extension to $\cstar$ of the pure states of $\maxidealspacecstarcomm$ in the first part of this remark are total on $\cstar$.  Hence this involves a larger algebra, but then only its positive cone.
 \end{enumerate}
\end{rem}

\section{The $C^*$-envelope of $\lonecomm$ and projections onto $\cstarcomm$ and $\lonecomm$}\label{s:enveloping_algebra_and_projections}

The results in Section~\ref{s:maxidealspace_lone} and~\ref{s:extending_and_restricting} enable us to clarify the relation between $\lone$ and its enveloping $C^*$-algebra $\cstar$ a bit further in Theorem~\ref{t:enveloping_algebra}, and also provide the tools to study the existence and uniqueness of projections onto $\cstarcomm$ and $\lonecomm$ in detail, cf.\ Theorems~\ref{t:cstar_projection_unique} and~\ref{t:lone_projection_unique}

To begin with, we have already seen in Proposition~\ref{p:homeomorphism_of_max_ideal_spaces} that the pure state spaces of $\cstarcomm$ and $\lonecomm$ are homeomorphic via a restriction map. In fact, more is true.

\begin{thm}\label{t:enveloping_algebra}
\!The $C^*\!$-algebra $\cstarcomm$ is the\! enveloping $C^*\!$-algebra of $\lonecomm$.
\end{thm}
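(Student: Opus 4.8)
The plan is to identify $\cstarcomm$ with the enveloping $C^*$-algebra of $\lonecomm$ by exhibiting the canonical map from $\lonecomm$ into its enveloping $C^*$-algebra, comparing its completion with $\cstarcomm$, and using the results already established about pure states and semisimplicity. Recall that the enveloping $C^*$-algebra $C^*(\lonecomm)$ is, by definition, the completion of $\lonecomm$ in the seminorm $\Vert a\Vert^{\prime\prime} := \sup_{\st}\st(a^*a)^{1/2}$, where $\st$ ranges over the positive forms of norm at most $1$ on $\lonecomm$; since $\lonecomm$ is commutative, by the GNS-theory for unital Banach $*$-algebras with isometric involution this supremum is computed over the pure states, i.e.\ over the hermitian characters, which by Proposition~\ref{p:nine} are all of $\maxidealspacelonecomm$. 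Thus $\Vert a\Vert^{\prime\prime} = \sup_{\ch\in\maxidealspacelonecomm}|\ch(a)| = \Vert \hat a\Vert_\infty$, the sup-norm of the Gelfand transform. Since $\lonecomm$ is semisimple by Theorem~\ref{t:hermitian_and_semisimple}, this is a genuine norm, so $C^*(\lonecomm)$ is the closure of the image of the (injective) Gelfand transform inside $C_0(\maxidealspacelonecomm)=C(\maxidealspacelonecomm)$.

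Next I would do exactly the same computation on the $C^*$-algebra side. The algebra $\cstarcomm$ is a commutative unital $C^*$-algebra, hence equals $C(\maxidealspacecstarcomm)$ via its own Gelfand transform. By Proposition~\ref{p:homeomorphism_of_max_ideal_spaces}, restriction of characters gives a homeomorphism $\maxidealspacecstarcomm \xrightarrow{\ \sim\ } \maxidealspacelonecomm$, and this homeomorphism is precisely compatible with the inclusion $\lonecomm\hookrightarrow\cstarcomm$: for $a\in\lonecomm\subset\cstarcomm$ and $\ch\in\maxidealspacecstarcomm$ one has $\widehat{a}^{\,\cstarcomm}(\ch) = \ch(a) = (\ch\rest_{\lonecomm})(a) = \widehat{a}^{\,\lonecomm}(\ch\rest_{\lonecomm})$. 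Hence under the identifications $\cstarcomm\cong C(\maxidealspacecstarcomm)\cong C(\maxidealspacelonecomm)$, the inclusion $\lonecomm\hookrightarrow\cstarcomm$ is carried to the Gelfand transform $\lonecomm\to C(\maxidealspacelonecomm)$, and in particular the norm that $\cstarcomm$ induces on $\lonecomm$ is exactly the sup-norm $\Vert\hat a\Vert_\infty = \Vert a\Vert^{\prime\prime}$ computed above. Therefore $\cstarcomm$ contains an isometric copy of $\lonecomm$ in the enveloping seminorm.

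It remains to check that this copy is dense, i.e.\ that $\lonecomm$ (equivalently $\fscomm$) is dense in $\cstarcomm$ — but this is already recorded in Section~\ref{s:preliminaries}: the Ces\`aro means $\cesaro_N(c)$ of any $c\in\cstarcomm$ lie in $\fscomm$ and converge to $c$ in the norm of $\cstar$, so $\lonecomm$ is dense in $\cstarcomm$. Combining: $\cstarcomm$ is a $C^*$-algebra containing $\lonecomm$ isometrically for the enveloping seminorm of $\lonecomm$, with dense image, so by the universal property (or simply by uniqueness of completions) $\cstarcomm$ is the enveloping $C^*$-algebra of $\lonecomm$. The only genuinely substantive input is the identification of the enveloping seminorm of $\lonecomm$ with the Gelfand sup-norm, which rests on two facts already in hand — that every character of $\lonecomm$ is hermitian (Proposition~\ref{p:nine}), so the pure states are all of $\maxidealspacelonecomm$, and that $\lonecomm$ is semisimple (Theorem~\ref{t:hermitian_and_semisimple}), so the seminorm is a norm; the main obstacle, namely matching the two maximal ideal spaces compatibly with the inclusion, has already been surmounted in Proposition~\ref{p:homeomorphism_of_max_ideal_spaces}, so the present proof is essentially an assembly of these pieces.
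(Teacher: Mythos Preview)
Your proof is correct and follows essentially the same route as the paper: both arguments reduce the enveloping $C^*$-seminorm on $\lonecomm$ to $\sup_{\ch\in\maxidealspacelonecomm}|\ch(\cdot)|$ via the identification of pure states with characters (Proposition~\ref{p:nine}), then invoke the homeomorphism $\maxidealspacecstarcomm\cong\maxidealspacelonecomm$ (Proposition~\ref{p:homeomorphism_of_max_ideal_spaces}) and Gelfand--Naimark to match this with the $\cstarcomm$-norm, with density of $\lonecomm$ in $\cstarcomm$ already in hand from Section~\ref{s:preliminaries}. Your explicit appeal to semisimplicity to certify that the seminorm is a norm is a harmless extra remark; the paper omits it because the equality with the $\cstarcomm$-norm already forces this.
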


\begin{proof}
We already know that $\lonecomm$ is a dense subalgebra of $\cstarcomm$, since $\fscomm\hookrightarrow\lonecomm\hookrightarrow\cstarcomm$. Hence all we need to show is that the enveloping $C^*$-norm of the unital involutive Banach algebra $\lonecomm$ coincides with the restriction of the norm of $\cstarcomm$ to $\lonecomm$. According to \cite[2.7.1]{dixmier}, the enveloping $C^*$-norm $\Vert\ell \Vert^\prime$ of an element $\ell$ of $\lonecomm$ can be calculated as
\[
\Vert \ell\Vert^\prime=\sup_{\st\in P} \st(\ell^*\ell)^{1/2},
\]
where $P$ is the pure state space of $\lonecomm$. We have already observed (cf.\ Theorem~\ref{t:summarising_theorem}) that this pure state space coincides with $\maxidealspacelonecomm$, hence
\[
\Vert \ell\Vert^\prime=\sup_{\ch\in\maxidealspacelonecomm}| \ch(\ell)|.
\]
Invoking Proposition~\ref{p:homeomorphism_of_max_ideal_spaces} then yields
\[
\Vert \ell\Vert^\prime=\sup_{\ch\in\maxidealspacecstarcomm}| \ch(c)|.
\]
By the commutative Gelfand-Naimark theorem, this is indeed equal to the norm of $\ell$ in the $C^*$-algebra $\cstarcomm$, as was to be proved.
\end{proof}

Thus, after the fact, the homeomorphism in Proposition~\ref{p:homeomorphism_of_max_ideal_spaces} is explained by Theorem~\ref{t:enveloping_algebra} and the general correspondence theorems for the states of an involutive Banach algebra with a 1-bounded approximate identity and the states of its enveloping $C^*$-algebra, cf.\ \cite[2.7.4 and~2.7.5]{dixmier}.

\medskip
We now turn to the existence and uniqueness of projections from $\cstar$ onto $\cstarcomm$, and from $\lone$ onto $\lonecomm$. This topic will occupy us for the remainder of this section. We start with existence in $\cstar$.

\begin{thm}\label{t:cstar_projection} The following are equivalent for $\cstar$:
\begin{enumerate}[\upshape (i)]
\item There exists a not necessarily continuous linear map $\Eprimestar:\fs\to\cstarcomm$, which is the identity on $\fscomm$, and which is a morphism of left $\coeffalg$-modules.
\item There exists a not necessarily continuous linear map $\Eprimestar:\fs\to\cstarcomm$, which is the identity on $\fscomm$, and which is a morphism of right $\coeffalg$-modules.
\item The sets $\fixkint$ are closed, for all $k\in\Z$.
\item There exists a faithful positive norm one projection $\Eprimestar:\cstar\to\cstarcomm$ onto $\cstarcomm$, which is a morphism of $\cstarcomm$-bimodules, and which restricts to a projection from $\fs$ onto $\fscomm$.
\end{enumerate}
\end{thm}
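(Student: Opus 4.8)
The plan is to prove the cycle of implications (iii)$\Rightarrow$(iv)$\Rightarrow$(i)$\Rightarrow$(iii) and (iv)$\Rightarrow$(ii)$\Rightarrow$(iii), with the topological condition (iii) as the hub. The natural candidate for $\Eprimestar$ is $\Eprimestar(c):=\sum_k c(k)\delta^k$, read off from the generalised Fourier coefficients of $c\in\cstar$, but with each $c(k)$ replaced by its restriction to the set $\fixkint$ — that is, we want to project $c(k)\in\coeffalg$ onto the subspace of functions supported in $\overline{\fixkint}$ and then, to land in $\cstarcomm$ as described by \eqref{e:cstarcommutant_description}, actually supported in $\fixk$. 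The point of condition (iii) is precisely that when $\fixkint$ is closed, multiplication by the indicator $\mathbf 1_{\fixkint}$ sends $\coeffalg$ continuously into $\coeffalg$ (since $\fixkint=\overline{\fixkint}$ is then clopen-like enough: it is closed, and it is open by definition, so it is clopen, and $\mathbf 1_{\fixkint}\in\coeffalg$), and the resulting function is automatically supported in $\fixk$.

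First I would do (iii)$\Rightarrow$(iv). Assuming all $\fixkint$ are clopen, set $p_k:=\mathbf 1_{\fixkint}\in\coeffalg$, a central idempotent, and define $\Eprimestar(c):=\sum_k (p_k\, c(k))\,\delta^k$ for $c\in\cstar$, interpreting the sum via Ces\`aro means as in the preliminaries. One checks: the Ces\`aro means $\cesaro_N(\Eprimestar(c))$ are obtained from those of $c$ by inserting the $p_k$, so they converge; the limit lies in $\cstarcomm$ by \eqref{e:cstarcommutant_description}; $\Eprimestar$ is the identity on $\cstarcomm$ because for $c\in\cstarcomm$ one has $\supp c(k)\subset\fixk$, and on each $\per_p$-component $c(k)$ is already supported inside $\fixkint$ (here one uses Lemma~\ref{l:one} together with the fact that on $\perpoints\cap\fixk$ points have a neighbourhood in $\fixk$, while on the aperiodic part $c(k)=0$ for $k\neq0$); it is a $\cstarcomm$-bimodule morphism from the module properties \eqref{e:projection_properties} of $\Estar$ and centrality of the $p_k$; it is positive and norm one because it can be written as a weak limit of compressions $a\mapsto u^{*}au$ by the unitaries $u=\sum_k p_k\delta^k$-type averages, or, more cleanly, because $\Eprimestar=\Estar'\circ E'$ for suitable conditional-expectation-type maps and one invokes that $\Estar$ is faithful of norm one; faithfulness of $\Eprimestar$ follows from faithfulness of $\Estar$ together with $\Estar=\Estar\circ\Eprimestar$ on the positive cone. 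Restricting to $\fs$ gives a projection onto $\fscomm$ since the sums are then finite.

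The implications (iv)$\Rightarrow$(i) and (iv)$\Rightarrow$(ii) are immediate (restrict the projection of (iv) to $\fs$ and forget all structure except one-sided $\coeffalg$-linearity). The crux is (i)$\Rightarrow$(iii) (and symmetrically (ii)$\Rightarrow$(iii)), and I expect this to be the main obstacle. Here one argues by contraposition: suppose $\fixkint$ is not closed for some $k$, pick a point $\pt\in\overline{\fixkint}\setminus\fixkint$, and choose $f\in\coeffalg$ with $\supp f$ small enough that $f\delta^k\notin\fscomm$ yet $f\delta^k$ is a finite sum of terms each of which, after applying any left-$\coeffalg$-module map that is the identity on $\fscomm$, is forced by \eqref{e:cstarcommutant_description} and Lemma~\ref{l:one} to have its $k$-th Fourier coefficient supported in $\fixk$ — but then evaluating at $\pt$ and using that $\pt$ is a limit of points of $\fixkint$ where the module map must reproduce $f$, one derives $f(\pt)=0$ for all such $f$, contradicting a suitable choice of $f$ with $f(\pt)\neq0$. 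Making this rigorous requires a careful choice of test element: one wants $g\in\coeffalg$, supported near $\pt$, with $g(\pt)=1$, and then to write $g\delta^k = (g\,\mathbf 1_{\fixkint})\delta^k + (g\,\mathbf 1_{\topspace\setminus\fixkint})\delta^k$ — but the indicators need not be continuous, which is the whole point — so instead one approximates: for each neighbourhood $W$ of $\pt$ one produces $h_W\in\coeffalg$, $\supp h_W\subset W$, $h_W(\pt)=1$, with $\supp h_W\not\subset\fixk$, and shows that $\Eprimestar(h_W\delta^k)$, being the identity-preserving $\coeffalg$-module image, must on one hand agree with $h_W\delta^k$ in a strong enough sense to force $(\Eprimestar(h_W\delta^k))(k)(\pt)=1$, and on the other hand have $(\Eprimestar(h_W\delta^k))(k)$ supported in $\fixk$ hence in $\overline{\fixkint}$, which is compatible, so the contradiction must instead be extracted from the fact that $\pt\notin\fixkint$ forces, via $\coeffalg$-linearity and a partition-of-unity argument separating the part of $\supp h_W$ inside $\fixk$ from the part outside, that the "outside" part maps to something with nonzero $k$-th coefficient at a point not in $\fixk$, contradicting \eqref{e:cstarcommutant_description}. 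I would organise this last argument as a separate lemma, isolating the statement: a left-$\coeffalg$-module map $\fs\to\cstarcomm$ that is the identity on $\fscomm$ exists iff each $\fixkint$ is closed, and prove the "only if" by the localisation argument just sketched.
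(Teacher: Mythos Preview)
Your argument for (i)$\Rightarrow$(iii) has a genuine gap. You localise near a point $x\in\overline{\fixkint}\setminus\fixkint$ with test functions $h_W$ and try a partition-of-unity split into ``inside $\fixk$'' and ``outside $\fixk$'' pieces, but you yourself observe that the first attempted contradiction is ``compatible'' rather than contradictory, and the fallback --- that the outside piece is forced to have nonzero $k$-th coefficient off $\fixk$ --- does not follow: left $\coeffalg$-linearity only gives $\Eprimestar(fh_W\delta^k)=f\,\Eprimestar(h_W\delta^k)$, which says nothing about values of a coefficient at points outside $\fixk$. The missing idea is much cleaner: apply $\Eprimestar$ once to the single element $\delta^k$ and set $g:=\Eprimestar(\delta^k)(k)\in\coeffalg$, which has $\supp g\subset\fixk$ by \eqref{e:cstarcommutant_description}. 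For each $x_\alpha\in\fixkint$ pick $h_\alpha\in\coeffalg$ with $h_\alpha(x_\alpha)=1$ and $\supp h_\alpha\subset\fixk$; then $h_\alpha\delta^k\in\fscomm$, so $h_\alpha\delta^k=\Eprimestar(h_\alpha\delta^k)=h_\alpha\,\Eprimestar(\delta^k)$, and taking $k$-th coefficients yields $h_\alpha=h_\alpha g$, hence $g(x_\alpha)=1$. By continuity $g\equiv 1$ on $\overline{\fixkint}$, so every $x$ in this closure lies in the interior of $\supp g\subset\fixk$, i.e.\ $x\in\fixkint$. This is exactly the paper's argument; your sketch never isolates the single fixed function $g$ that does all the work.

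Your (iii)$\Rightarrow$(iv) is also not solid. Defining $\Eprimestar(c)=\sum_k p_k\,c(k)\,\delta^k$ ``via Ces\`aro means'' is circular: you cannot form $\cesaro_N(\Eprimestar(c))$ before $\Eprimestar(c)$ exists, and proving directly that $\sum_{|j|\le N}(1-\tfrac{|j|}{N+1})p_j\,c(j)\,\delta^j$ is Cauchy is no easier than the contractivity bound you need anyway. The paper defines $\Eprimestar$ on $\fs$ (finite sums), proves $\|\Eprimestar\|\le 1$ there by bounding $|\ch(\sum_k\chi_kf_k\delta^k)|$ for every $\ch\in\maxidealspacecstarcomm$ using the surjection $\topspace\times\T\twoheadrightarrow\maxidealspacecstarcomm$ of Theorem~\ref{t:topological_quotient} and Proposition~\ref{p:homeomorphism_of_max_ideal_spaces}, and checks positivity character by character via the explicit computations \eqref{e:cstar_positive_point_evaluation}--\eqref{e:cstar_positive_point_evaluation_and_torus}. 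Your ``weak limit of compressions by unitaries $\sum_k p_k\delta^k$'' and ``$\Eprimestar=\Estar'\circ E'$'' suggestions are not meaningful as written. The one step you do have right is faithfulness from $\Estar\circ\Eprimestar=\Estar$.
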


\begin{proof}
We start by showing that (1) implies (3). Fix $k\in\Z$; we may assume that $\fixk\neq\emptyset$. Suppose that $\pt\in\overline{\fixkint}$, and let $(\ptx_\alpha)$ be a net in $\fixkint$ converging to $\pt$. For each $\alpha$, choose $h_\alpha\in\coeffalg$, such that $h_\alpha(\ptx_\alpha)=1$, and $\supp (h_\alpha)\subset\fixk$. Then $h_\alpha\delta^{k}\in\cstarcomm$, for all $\alpha$. Since $\Eprimestar$ is a morphism of left $\coeffalg$-modules, we have $h_\alpha\delta^{k}=\Eprimestar(h_\alpha\delta^{k})=h_\alpha \Eprimestar(\delta^{k})$, for all $\alpha$. Using this and the first part of \eqref{e:projection_properties}, we find that $h_\alpha=(h_\alpha\delta^{k})(k)=[h_\alpha\Eprimestar(\delta^{k})](k)=h_\alpha[\Eprimestar(\delta^{k})(k)]$, for all $\alpha$. Evaluation at $\pt$ yields $[\Eprimestar(\delta^{k})(k)](\ptx_\alpha)=1$, for all $\alpha$, hence $[\Eprimestar(\delta^{k})(k)](\pt)=1$ by continuity. Therefore, $\pt$ is an interior point of the support of $[\Eprimestar(\delta^{k})(k)]$. Since this support is contained in $\fix_{k}(\homeo)$, we conclude that $\pt\in\fixkint$, as was to be shown.

The proof that (2) implies (3) is similar, and we use the same notation. Since $\delta^{k}h_\alpha=h_\alpha\delta^{k}\in\cstarcomm$, and $\Eprimestar$ is a morphism of right $\coeffalg$-modules, we have $\delta^{k}h_\alpha=\Eprimestar(\delta^{k}h_\alpha)=\Eprimestar(\delta^{k})h_\alpha$, for all $\alpha$.
Using the second part of \eqref{e:projection_properties}, this implies that $h_\alpha=(h_\alpha\delta^{k})(k)=(\delta^{k}h_\alpha)(k)=[\Eprimestar(\delta^{k})h_\alpha](k)=(h_\alpha\circ\homeo^{-k})[\Eprimestar(\delta^{k})(k)]=h_\alpha[\Eprimestar(\delta^{k})(k)]$, for all $\alpha$. Evaluation at $\pt$ again yields $[\Eprimestar(\delta^{k})(k)](\ptx_\alpha)=1$, for all $\alpha$, and as before this implies that $\pt\in\fixkint$.

In order to show that (3) implies (4), we let $\chi_k$ be the characteristic function of $\fixkint$, for $k\in\Z$. Since $\fixkint$ is clopen, $\chi_k\in \coeffalg$, for all $k\in\Z$, hence we can define $\Eprimestar:\fs\to\cstarcomm$ by $\Eprimestar(\sum_k f_k\delta^k):=\sum_k \chi_k f_k\delta^k$, for $\sum_k f_k\delta^k\in\fs$. It is easy to see that the image of $\Eprimestar$ is, in fact, contained in $\cstarcomm$, hence in $\fscomm$, and that $\Eprimestar$ is the identity on $\fscomm$. We start by showing that, when $\fs$ carries the norm from $\cstar$, $\Eprimestar$ is continuous, has norm at most 1, is positive, and is a morphism of $\fscomm$-bimodules. Then the continuous extension of $\Eprimestar$ to $\cstar$ will provide a projection from $\cstar$ onto $\lonecomm$ for which all required properties are clear with the exception of faithfulness, which we will then consider later.
\\As to the continuity and the contractivity of $\Eprimestar$, since $\cstarcomm$ is a commutative $C^*$-algebra, we need to show that $|\psi(\sum_k \chi_k f_k\delta^k|\leq \Vert \sum_k f_k\delta^k\Vert$, for all $\psi\in\maxidealspacecstarcomm$, and all $\sum_k f_k\delta^k\in\fs$. Since Proposition~\ref{p:homeomorphism_of_max_ideal_spaces} and Theorem~\ref{t:topological_quotient} provide a surjective map from $\topspace\times\T$ onto $\maxidealspacecstarcomm$, we see that we are left with demonstrating that
$|\sum_k \chi_k(\ptx)f_k(\ptx) z^k|\leq \Vert \sum_k f_k\delta^k\Vert$, for all $\sum_k f_k\delta^k\in\fs$, $\ptx\in \topspace$, and $z\in \T$. In that case,  $|\sum_k \chi_k(\ptx)f_k(\ptx) z^k|\leq\sum_k \Vert\chi_k f_k\Vert_\infty\leq \sum_k \Vert f_k\Vert_\infty=\Vert \sum_k f_k\delta^k\Vert_1$, and since $\cstar$ is the enveloping $C^*$-algebra of $\lone$, we certainly have $\Vert \sum_k f_k\delta^k\Vert_1\leq \Vert \sum_k f_k\delta^k\Vert$. Hence $\Eprimestar$ is contractive.
\\As to the positivity, if $\sum_k f_k\delta^k\in\fs$, then a short calculation yields
\begin{equation}\label{e:sumstarsumfinitesequences}
\Eprimestar\left[\left(\sum_k f_k\delta^k\right)^*\left(\sum_k f_k\delta^k\right)\right]=\sum_m \left[\chi_m\sum_k \left(\overline{f}_k f_{k+m}\right)\circ\homeo^k\right]\delta^m.
\end{equation}
In order to see that the right hand side is positive in the commutative $C^*$-algebra $\cstarcomm$, we apply its characters as described in Proposition~\ref{p:nine}. To begin with, if $\pt\notin\intfixedpoints$, then
\begin{equation}\label{e:cstar_positive_point_evaluation}
\ch_{\pt}\left(\sum_m \left[\chi_m\sum_k \left(\overline{f}_k f_{k+m}\right)\circ\homeo^k\right]\delta^m\right)=\sum_k |f_k(\homeo^k \pt)|^2\geq 0.
\end{equation}
If $\pt\in\intfixedpoints$, let $n$ be the minimal $n\geq 1$ such that $\pt\in\fixnint$. Then, for $c\in\T$,
\begin{align}\label{e:cstar_positive_point_evaluation_and_torus}
\ch_{\pt,c}\left(\!\sum_m\! \left[\chi_m\sum_k \left(\overline{f}_k f_{k+m}\right)\circ\homeo^k\right]\!\delta^m\!\right)&=\sum_j \left[\chi_{jn}(\pt)\sum_k \left(\overline{f}_k f_{k+jn}\right)(\homeo^k \pt)\right]c^{j}\\
&=\sum_j \sum_k \left(\overline{f}_k f_{k+jn}\right)(\homeo^k \pt)c^{j}\notag\\
&=\sum_{r=0}^{n-1}\sum_{i,j}\left(\overline{f}_{r+in} f_{r+(i+j)n}\right)(\homeo^{r+in}\pt)c^j\notag\\
&=\sum_{r=0}^{n-1}\sum_{i,j}\left(\overline{f}_{r+in} f_{r+jn}\right)(\homeo^{r}\pt)c^{j-i}\notag\\
&=\sum_{r=0}^{n-1}\left|\sum_{j} f_{r+jn}(\homeo^r \pt) c^j \right|^2
\notag\\&\geq 0.\notag
\end{align}
Hence $\Eprimestar$ is positive, and it remains to show that $\Eprimestar:\fs\to\cstarcomm$ is a morphism of $\fscomm$-bimodules. Let $k,l\in\Z$, $f_k\in\coeffalg$, and $g_l\in\coeffalg$, with $\supp(g_l)\subset\fixl$. We need to prove that $\Eprimestar(f_k\delta^k\cdot g_l\delta^l)=\left[\Eprimestar(f_k\delta^k)\right]g_l\delta^l$ and $\Eprimestar(g_l\delta^l\cdot f_k\delta^k)=g_l\delta^l\Eprimestar(f_k\delta^k)$. Starting with the first relation, this is easily seen to be equivalent with
\begin{equation}\label{e:right_module_morphism}
\chi_{k+l}(\ptx)f_k(\ptx) g_l(\homeo^{-k}\ptx)=\chi_{k}(\ptx)f_k(\ptx) g_l(\homeo^{-k}\ptx)
\end{equation}
being valid for all $\ptx\in \topspace$. If $g_l(\homeo^{-k}\ptx)=0$, then \eqref{e:right_module_morphism} clearly holds. If $g_l(\homeo^{-k}\ptx)\neq 0$, then $\homeo^{-k}\ptx\in\supp(g_l)^\intsymbol\subset\fixlint$, hence $\ptx\in\fixlint$. It is easily seen that this implies $\chi_{k+l}(\ptx)=\chi_k(\ptx)$, so that \eqref{e:right_module_morphism} holds once more. Hence $\Eprimestar$ preserves the right $\fscomm$-action. The requirement $\Eprimestar(g_l\delta^l\cdot f_k\delta^k)=g_l\delta^l\Eprimestar(f_k\delta^k)$ is equivalent with
\begin{equation}\label{e:left_module_morphism}
\chi_{k+l}(\ptx)f_k(\homeo^{-l}\ptx)g_l(x)=\chi_k(\homeo^{-l}\ptx)f_k(\homeo^{-l}x)g_l(\ptx)
\end{equation}
holding for all $\ptx\in \topspace$. If $g_l(\ptx)=0$, then this is clear. If $g_l(\ptx)\neq 0$, then $\ptx\in\supp(g_l)^\intsymbol\subset\fixlint$, and \eqref{e:left_module_morphism} reduces to $\chi_{k+l}(\ptx)f_k(\ptx)g_l(x)=\chi_k(\ptx)f_k(x)g_l(\ptx)$. Since $x\in\fixlint$ implies that $\chi_{k+l}(x)=\chi_k(x)$ again, this indeed holds true. Hence $\Eprimestar$ preserves the left $\fscomm$-action.

As already mentioned, the continuous extension of $\Eprimestar$ to $\cstar$ satisfies all requirements, except that we still have to observe that it is faithful. Now $\Estar\circ\Eprimestar=\Estar$ on $\fs$ by construction, and by continuity this holds on $\cstar$ as well. Therefore the known faithfulness of $\Estar$ on $\cstar$ implies that of $\Eprimestar$.

It is clear that (4) implies both (1) and (2).
\end{proof}

\begin{rem}\quad
\begin{enumerate}[\upshape (i)]
\item It is interesting that preservation of a one-sided $\coeffalg$-action in (1) and (2) implies preservation of a two-sided action of a larger algebra $\cstarcomm$ in (4).
\item The proof that (3) implies (4) in Theorem~\ref{t:cstar_projection} could be simplified somewhat by merely establishing that $\Eprimestar:\cstar\to\cstarcomm$ as constructed is a norm one projection, and then invoking the projection theorem for $C^*$-algebras (cf.\ \cite{tomiyama_projection_theorem}, \cite[II.6.10.2]{blackadar}) to conclude that $\Eprimestar$ is automatically positive and a morphism of $\cstarcomm$-bimodules. Likewise, (4) could be replaced by an equivalent statement, which merely requires the existence of a faithful norm one projection from $\cstar$ onto $\cstarcomm$, projecting $\fs$ onto $\fscomm$. We have preferred a direct proof, and we have also included the ``redundant''  properties in (4), in order to bring out the parallel with the corresponding statements in Theorem~\ref{t:lone_projection} for $\lone$, where such a general projection theorem is not available.
\end{enumerate}
\end{rem}

We use Theorem~\ref{t:cstar_projection} and its proof in our next result on uniqueness for $\cstar$.\footnote{The equivalence of (1) and (2) and the uniqueness and faithfulness of a norm one projection as in Theorem~\ref{t:cstar_projection_unique} were announced in \cite[Theorem~6.1]{svensson_tomiyama}, but with a misprint. In the notation of \cite{svensson_tomiyama}, which is slightly different from ours, $\perk^0$ in Theorem~6.1 should be replaced with $\per^k(\homeo)^0$.}

\begin{thm}\label{t:cstar_projection_unique} The following are equivalent for $\cstar$:
\begin{enumerate}[\upshape (i)]
\item The sets $\fixkint$ are closed, for all $k\in\Z$.
\item There exists a norm one projection from $\cstar$ onto $\cstarcomm$.
\item There exists a positive projection from $\cstar$ onto $\cstarcomm$.
\end{enumerate}
In that case, a projection as in \ulb 2\urb\ or \ulb 3\urb\ is unique. Denoting it by $\Eprimestar$, and denoting the continuous characteristic function of $\fixkint$ by $\chi_k$, for $k\in\Z$, it is given on the dense involutive subalgebra $\fs$ of $\cstar$ by
\begin{equation}
\Eprimestar\left(\sum_k f_k\delta^k\right)=\sum_k \chi_k f_k\delta^k,
\end{equation}
for $\sum_k f_k\delta^k\in\fs$. Furthermore, $\Eprimestar$ is a faithful positive norm one projection, which is a morphism of $\cstarcomm$-bimodules, and which restricts to a projection from $\fs$ onto $\fscomm$.
\end{thm}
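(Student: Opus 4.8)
The plan is to deduce this theorem from Theorem~\ref{t:cstar_projection} together with the separating-set material from Section~\ref{s:maxidealspace_lone} and Corollaries~\ref{c:unique_extension} and~\ref{c:separating_lonecomm}. First I would dispose of the equivalences: (1)$\Leftrightarrow$(2)$\Leftrightarrow$(3) and the explicit formula on $\fs$, together with all the ``furthermore'' properties, are essentially restatements of parts of Theorem~\ref{t:cstar_projection}. Indeed, (1)$\Rightarrow$(4) of Theorem~\ref{t:cstar_projection} gives a faithful positive norm one projection $\Eprimestar$ with the stated formula and module properties, so (1)$\Rightarrow$(2) and (1)$\Rightarrow$(3). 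That (2)$\Rightarrow$(3) is the projection theorem for $C^*$-algebras (norm one projections onto $C^*$-subalgebras are positive, cf.\ \cite{tomiyama_projection_theorem}, \cite[II.6.10.2]{blackadar}). Finally, for (3)$\Rightarrow$(1): if $\Eprimestar:\cstar\to\cstarcomm$ is any positive projection, then its restriction to $\fs$ takes values in $\cstarcomm$ and is the identity on $\fscomm$; moreover a positive projection is automatically a morphism of $\cstarcomm$-bimodules (again by the projection theorem, since a positive projection onto a $C^*$-subalgebra of a $C^*$-algebra is a conditional expectation), in particular a morphism of left $\coeffalg$-modules, so condition~(1) of Theorem~\ref{t:cstar_projection} holds, hence so does (3) of that theorem, which is exactly our~(1).

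The real content is the uniqueness statement, and this is where the work lies. Suppose $\Eprimestar_1$ and $\Eprimestar_2$ are two projections as in (2) or (3); by the above both are positive, hence both are conditional expectations (morphisms of $\cstarcomm$-bimodules). I would show they agree on $\cstarcomm$-valued data by testing against states. Fix $c\in\cstar$; then $\Eprimestar_1(c)$ and $\Eprimestar_2(c)$ both lie in $\cstarcomm$, so it suffices to show $\ch(\Eprimestar_1(c))=\ch(\Eprimestar_2(c))$ for every $\ch$ in a separating subset of $\maxidealspacecstarcomm$. For this I would use exactly the separating set supplied by Remark~\ref{r:separating_cstarcomm}(i): the pure states $\ch_{\pt}$ with $\pt\in\aperpoints$ together with $\ch_{\pt,c}$ with $\pt\in\intperpoints$, $c\in\T$. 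The point is that for each such $\ch$, Corollary~\ref{c:unique_extension} says $\ch$ has a \emph{unique} state extension to $\cstar$, namely $\st_\pt$ (resp.\ $\st_{\pt,c}$). Now $\ch\circ\Eprimestar_i$ is a state on $\cstar$ (it is positive and unital, being the composition of a positive unital map with a state) whose restriction to $\cstarcomm$ is $\ch\circ\Eprimestar_i|_{\cstarcomm}=\ch$ since $\Eprimestar_i$ is the identity on $\cstarcomm$. By the uniqueness of the state extension, $\ch\circ\Eprimestar_1=\ch\circ\Eprimestar_2$ (both equal $\st_\pt$ or $\st_{\pt,c}$) on all of $\cstar$. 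Hence $\ch(\Eprimestar_1(c))=\ch(\Eprimestar_2(c))$ for all $\ch$ in the separating set, and since that set separates the points of $\cstarcomm$ we conclude $\Eprimestar_1(c)=\Eprimestar_2(c)$. As $c$ was arbitrary, $\Eprimestar_1=\Eprimestar_2$.

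The main obstacle, and the step I would be most careful about, is verifying that $\ch\circ\Eprimestar_i$ really is a state \emph{extending} $\ch$ in the sense required to invoke Corollary~\ref{c:unique_extension}: one needs that $\Eprimestar_i$ restricted to $\cstarcomm$ is the identity (true by definition of projection onto $\cstarcomm$) and that $\ch\circ\Eprimestar_i$ is positive on all of $\cstar$ (true because $\Eprimestar_i$ is a positive map and $\ch$ is a state) and unital (true since $\Eprimestar_i$ fixes the identity, which lies in $\cstarcomm$). Given those three routine checks, Corollary~\ref{c:unique_extension} applies verbatim. Once uniqueness is established, the explicit formula and the list of additional properties are read off from the projection produced in the implication (3)$\Rightarrow$(4) of Theorem~\ref{t:cstar_projection}, since that projection is one of the objects to which the uniqueness statement applies and hence must coincide with the abstractly given one. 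I would present the argument in essentially this order: equivalences first (quoting Theorem~\ref{t:cstar_projection} and the $C^*$ projection theorem), then uniqueness via the separating set and unique state extensions, then identification with the explicit formula.
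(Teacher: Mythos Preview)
Your proposal is correct and follows essentially the same route as the paper: the equivalences are reduced to Theorem~\ref{t:cstar_projection} and Tomiyama's projection theorem, and uniqueness is proved by composing with the pure states $\ch_{\pt}$ ($\pt\in\aperpoints$) and $\ch_{\pt,c}$ ($\pt\in\intperpoints$, $c\in\T$), invoking Corollary~\ref{c:unique_extension} for unique state extensions, and then Remark~\ref{r:separating_cstarcomm} (equivalently Corollary~\ref{c:separating_lonecomm} via Proposition~\ref{p:homeomorphism_of_max_ideal_spaces}) for separation. The only cosmetic difference is that for (3)$\Rightarrow$(1) you spell out the intermediate step ``positive unital $\Rightarrow$ norm one $\Rightarrow$ conditional expectation $\Rightarrow$ left $\coeffalg$-module map'', whereas the paper simply writes that this implication is clear from Theorem~\ref{t:cstar_projection}; your version is more explicit but relies on the same facts (note that what you cite as ``the projection theorem'' for this step is really Tomiyama's theorem together with the Russo--Dye-type fact that a positive unital map has norm one).
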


\begin{proof}
It is clear from Theorem~\ref{t:cstar_projection} that (1) implies (2), and that (3) implies (1). By the projection theorem for $C^*$-algebras (cf.\ \cite{tomiyama_projection_theorem}, \cite[II.6.10.2]{blackadar}), any projection as in (2) is a projection as in (3). Hence (2) implies (3) and, moreover, for the uniqueness statements it is sufficient to establish the uniqueness for projections as in (3). As to this, assume that $\Eprimestar$ and $\Fprimestar$ are two positive projections from $\cstar$ onto $\cstarcomm$. If $\pt\in\aperpoints$, then $\ch_{\pt}\circ\Eprimestar$ and $\ch_{\pt}\circ\Fprimestar$ are two state extensions of $\ch_{\pt}$ from $\cstarcomm$ to $\cstar$. According to Corollary~\ref{c:unique_extension}, there is only one such, hence $\ch_{\pt}\circ\Eprimestar=\ch_{\pt}\circ\Fprimestar$, for all $\pt\in\aperpoints$. Likewise, if $\pt\in\intperpoints$, and $c\in\T$, then, again by Corollary~\ref{c:unique_extension}, it follows that $\ch_{\pt,c}\circ\Eprimestar=\ch_{\pt,c}\circ\Fprimestar$. Since we know from Remark~\ref{r:separating_cstarcomm} that the states $\ch_{\pt}$, for $\ptx\in\aperpoints$, together with all $\ch_{\pt,c}$, for $\ptx\in\intperpoints$ and $c\in\T$, separate the points in $\lonecomm$, we conclude that $\Eprimestar=\Fprimestar$, as desired. Now that the uniqueness of positive projections has been established, the rest is immediate, since then the projection $\Eprimestar$ in Theorem~\ref{t:cstar_projection} must be this positive projection. The remainder of the present theorem therefore follows from Theorem~\ref{t:cstar_projection} and its proof.
\end{proof}

\begin{rem}
For a topologically free system, $\cstarcomm=\coeffalg$ (and vice versa). For such a system, Theorem~\ref{t:cstar_projection_unique} implies that $\Estar$ is the unique norm one projection from $\cstar$ onto $\coeffalg$, so that we have retrieved this previously known result \cite[Proposition~2.11]{tomiyama_notes_two} as a special case.
\end{rem}

We now turn to $\lone$. Part (4) of the following companion result of Theorem~\ref{t:cstar_projection} uses a notion of positivity in the algebra $\lone$. As usual, we define the positive cone $\loneposcone$ in $\lone$ as the set of all elements of the form $\sum_j l_j^*l_j$,  where $l_j\in\lone$, for all $j$, and the summation is finite. This is a convex cone and it is also proper, as becomes obvious when viewing $\lone$ as an involutive subalgebra of the $C^*$-algebra $\cstar$. Hence this cone induces a partial ordering on the self-adjoint elements of $\lonecomm$. As usual, a linear map from $\lone$ to a vector space will then be called faithful if 0 is the only positive element in its kernel.

\begin{thm}\label{t:lone_projection} The following are equivalent for $\lone$:
\begin{enumerate}[\upshape (i)]
\item There exists a not necessarily continuous linear map $\Eprimeone:\fs\to\lonecomm$, which is the identity on $\fscomm$, and which is a morphism of left $\coeffalg$-modules.
\item There exists a not necessarily continuous linear map $\Eprimeone:\fs\to\lonecomm$, which is the identity on $\fscomm$, and which is a morphism of right $\coeffalg$-modules.
\item The sets $\fixkint$ are closed, for all $k\in\Z$.
\item There exists a faithful involutive norm one projection $\Eprimeone:\lone\to\lonecomm$ onto $\lonecomm$, which is a morphism of $\lonecomm$-bimodules, which restricts to a projection from $\fs$ onto $\fscomm$, and which is positive in the sense that $\phi(\Eprimeone(\ell))\geq 0$, for all $\ell\in\loneposcone$, and all states $\phi$ of $\lonecomm$.
\end{enumerate}
\end{thm}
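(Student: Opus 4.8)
The plan is to run the cycle of implications $(4)\Rightarrow(1)$, $(4)\Rightarrow(2)$, $(1)\Rightarrow(3)$, $(2)\Rightarrow(3)$, $(3)\Rightarrow(4)$ exactly along the lines of the proof of Theorem~\ref{t:cstar_projection}: the point is that for $\lone$ every implication either carries over verbatim or becomes \emph{easier}, with the one genuinely new ingredient being the positivity clause of~(4). The implications $(4)\Rightarrow(1)$ and $(4)\Rightarrow(2)$ are immediate, since $\coeffalg\subseteq\lonecomm$, so a morphism of $\lonecomm$-bimodules is in particular a morphism of left and of right $\coeffalg$-modules.

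For $(1)\Rightarrow(3)$ (and, symmetrically, $(2)\Rightarrow(3)$) I would repeat the corresponding argument in the proof of Theorem~\ref{t:cstar_projection} unchanged. Fix $k\in\Z$, let $\pt\in\overline{\fixkint}$, choose a net $\ptx_\alpha\in\fixkint$ with $\ptx_\alpha\to\pt$, and pick $h_\alpha\in\coeffalg$ with $h_\alpha(\ptx_\alpha)=1$ and $\supp(h_\alpha)\subset\fixk$, so that $h_\alpha\delta^k\in\fscomm$. The left $\coeffalg$-module property and the fact that $\Eprimeone$ is the identity on $\fscomm$ give $h_\alpha\delta^k=\Eprimeone(h_\alpha\delta^k)=h_\alpha\Eprimeone(\delta^k)$; taking the $k$-th generalised Fourier coefficient (the identities \eqref{e:projection_properties} being valid inside $\lone\subseteq\cstar$) yields $h_\alpha=h_\alpha\,[\Eprimeone(\delta^k)(k)]$. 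Evaluating at $\ptx_\alpha$ gives $[\Eprimeone(\delta^k)(k)](\ptx_\alpha)=1$, hence $[\Eprimeone(\delta^k)(k)](\pt)=1$ by continuity, so $\pt$ lies in the interior of the support of $\Eprimeone(\delta^k)(k)$; since that support is contained in $\fixk$, we conclude $\pt\in\fixkint$. Thus $\fixkint$ is closed.

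For $(3)\Rightarrow(4)$, the set $\fixkint$ being clopen, its characteristic function $\chi_k$ lies in $\coeffalg$, and I would define $\Eprimeone(\sum_k f_k\delta^k):=\sum_k\chi_k f_k\delta^k$ on $\fs$. Here $\lone$ is genuinely simpler than $\cstar$: boundedness is immediate from $\Vert\sum_k\chi_k f_k\delta^k\Vert=\sum_k\Vert\chi_k f_k\Vert_\infty\le\sum_k\Vert f_k\Vert_\infty$, so that $\Eprimeone$ extends to a norm one projection of $\lone$ onto $\lonecomm$, plainly the identity on $\fscomm$, with no use of the maximal ideal space needed for continuity. That $\Eprimeone$ is involutive reduces to $\chi_k=\chi_{-k}\circ\homeo^{-k}$, which holds because $\fix_k(\homeo)=\fix_{-k}(\homeo)$ and $\homeo$ permutes $\fixkint$. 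The $\lonecomm$-bimodule property is obtained, after reducing by density and continuity to generators $f_k\delta^k$ and $g_l\delta^l$ with $\supp(g_l)\subset\fixl$, from exactly the function-theoretic identities \eqref{e:right_module_morphism} and \eqref{e:left_module_morphism} already verified for $\cstar$. For the positivity clause, given $\ell\in\loneposcone$ and a state $\st$ of $\lonecomm$, I would reduce, by linearity, continuity of $\Eprimeone$ and $\st$, and density of $\fs$, to showing $\ch(\Eprimeone(l^*l))\ge 0$ for $l\in\fs$ and every character $\ch$ of $\lonecomm$ — every state of $\lonecomm$ being testable on characters, e.g.\ after passing to the $C^*$-envelope $\cstarcomm$ of $\lonecomm$ supplied by Theorem~\ref{t:enveloping_algebra} and invoking Gelfand's theorem there. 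But $\Eprimeone(l^*l)$ is precisely the element computed in \eqref{e:sumstarsumfinitesequences}, and applying to it the characters $\ch_\pt$ and $\ch_{\pt,c}$ described in Proposition~\ref{p:nine} gives exactly the nonnegative expressions \eqref{e:cstar_positive_point_evaluation} and \eqref{e:cstar_positive_point_evaluation_and_torus}. Finally, faithfulness: since $\fix_0(\homeo)=\topspace$ we have $\chi_0\equiv 1$, so $\Eone\circ\Eprimeone=\Eone$ on $\fs$ and hence on $\lone$ by continuity; as $\loneposcone$ lies in the positive cone of $\cstar$ and $\Eone$ is the restriction of the faithful projection $\Estar$, an $\ell\in\loneposcone$ with $\Eprimeone(\ell)=0$ satisfies $\Estar(\ell)=\Eone(\Eprimeone(\ell))=0$, whence $\ell=0$.

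I expect the main obstacle to be structural rather than technical: there is no projection theorem for Banach algebras, so the properties in~(4) that came for free in the $C^*$-case — involutivity, the $\lonecomm$-bimodule structure, the correct positivity notion, and faithfulness — must each be checked directly. The redeeming feature is that all these checks are the \emph{same} computations already performed for $\cstar$, namely \eqref{e:right_module_morphism}--\eqref{e:left_module_morphism} and \eqref{e:sumstarsumfinitesequences}--\eqref{e:cstar_positive_point_evaluation_and_torus}, supplemented only by the elementary observation about $\chi_0$ and the known faithfulness of $\Estar$; the one place calling for slightly more care is isolating the correct reduction of the positivity clause of~(4) to the action of the characters of $\lonecomm$.
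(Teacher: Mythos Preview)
Your proposal is correct and follows essentially the same route as the paper's proof. The only differences are presentational: the paper defines $\Eprimeone$ directly on all of $\lone$ by the formula $\sum_k f_k\delta^k\mapsto\sum_k\chi_kf_k\delta^k$ rather than first on $\fs$ and then extending (which makes norm-one-ness trivially visible and also allows the positivity computation to be carried out once, with infinite sums, via the obvious analogue of \eqref{e:sumstarsumfinitesequences}); for faithfulness the paper reads off the $\delta^0$-coefficient $\sum_k|f_k\circ\homeo^k|^2$ directly rather than routing through $\Estar$, which is of course the same computation; and for the reduction to characters the paper simply says ``we may assume $\phi$ is pure'' (legitimate by Krein--Milman together with the fact, from Proposition~\ref{p:nine}, that the pure states of $\lonecomm$ are exactly its characters), whereas you pass through the enveloping $C^*$-algebra $\cstarcomm$---both are valid and amount to the same thing.
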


\begin{proof}
The proof that each of (1) and (2) implies (3) is a simplified version (since no projection is needed to define coefficients) of the proof of the analogous implications in Theorem~\ref{t:cstar_projection}, and is therefore omitted.

In order to show that (3) implies (4), we let $\chi_k$ be the characteristic function of the clopen set $\fixkint$ again, for $k\in\Z$. Instead of arguing by density, as in the proof of Theorem~\ref{t:cstar_projection}, we can now directly define $\Eprimeone:\lone\to\lonecomm$ by $\Eprimeone(\sum_k f_k\delta^k):=\sum_k \chi_k f_k\delta^k$, for $\sum_k f_k\delta^k\in\lone$. It is easily checked that, in fact, $\Eprimeone$ maps $\lone$ into $\lonecomm$, and that it is the identity on $\lonecomm$. Clearly it restricts to a projection from $\fs$ onto $\fscomm$. It is obvious from the definition that $\Eprimestar$ is a norm one projection, and it follows as in the proof of Theorem~\ref{t:cstar_projection} that it preserves the left and right $\lonecomm$-action. It is routine to verify that $\Eprimeone$ is involutive. As to the faithfulness, we now have an analogue of \eqref{e:sumstarsumfinitesequences} with infinite summations, namely
\begin{equation}\label{e:sumstarsumlone}
\Eprimeone\left[\left(\sum_k f_k\delta^k\right)^*\left(\sum_k f_k\delta^k\right)\right]=\sum_m \left[\chi_m\sum_k \left(\overline{f}_k f_{k+m}\right)\circ\homeo^k\right]\delta^m,
\end{equation}
for $\sum_k f_k\delta^k\in\lone$. The coefficient of $\delta^0$ in the right hand side of \eqref{e:sumstarsumlone} is $\sum_k |f_k\circ\homeo^k|^2$. Therefore, if the left hand side in \eqref{e:sumstarsumlone} is zero, then $\sum_k f_k\delta^k=0$, and this argument obviously extends to the finite sums in the definition of the positive cone of $\lone$. To check the positivity in the sense as stated we may assume that $\phi$ is pure. From the first part of Theorem~\ref{t:summarising_theorem} and Proposition~\ref{p:nine} we know what these pure states of $\lonecomm$ are, and then the positivity follows from the obvious calculations parallelling \eqref{e:cstar_positive_point_evaluation} and \eqref{e:cstar_positive_point_evaluation_and_torus}, for the infinite summations in \eqref{e:sumstarsumlone}.

It is clear that (4) implies both (1) and (2).
\end{proof}

Uniqueness is considered in the next result.

\begin{thm}\label{t:lone_projection_unique} The following are equivalent for $\lone$:
\begin{enumerate}[\upshape (i)]
\item The sets $\fixkint$ are closed, for all $k\in\Z$.
\item There exists a not necessarily continuous projection from $\lone$ onto $\lonecomm$, which is positive in the sense that $\phi(\Eprimeone(\ell))\geq 0$, for all $\ell\in\loneposcone$, and all states $\phi$ of $\lonecomm$.
\end{enumerate}
In that case, a projection as in \ulb 2\urb\ is unique. Denoting it by $\Eprimeone$, and denoting the continuous characteristic function of $\fixkint$ by $\chi_k$, for $k\in\Z$, it is given by
\begin{equation}
\Eprimeone\left(\sum_k f_k\delta^k\right)=\sum_k \chi_k f_k\delta^k,
\end{equation}
for $\sum_k f_k\delta^k\in\lone$. Furthermore, $\Eprimeone$ is a faithful involutive norm one projection, which is a morphism of $\lonecomm$-bimodules, and which restricts to a projection from $\fs$ onto $\fscomm$.
\end{thm}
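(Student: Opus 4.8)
The plan is as follows. The implication $(1)\Rightarrow(2)$ is immediate: under $(1)$, the implication $(3)\Rightarrow(4)$ of Theorem~\ref{t:lone_projection} produces the map $\sum_k f_k\delta^k\mapsto\sum_k\chi_k f_k\delta^k$ and records that it is positive in the required sense. The real content is the converse $(2)\Rightarrow(1)$ together with the uniqueness, and both rest on a single observation. Suppose $\Eprimeone\colon\lone\to\lonecomm$ is a positive projection and $\ch$ is a character of $\lonecomm$; by Proposition~\ref{p:nine} every such $\ch$ is hermitian, hence a state of $\lonecomm$. Then $\ch\circ\Eprimeone$ is a state of $\lone$: it is positive, since $\ell^*\ell\in\loneposcone$ and therefore $\ch(\Eprimeone(\ell^*\ell))\geq 0$ by the defining positivity of $\Eprimeone$; it satisfies $(\ch\circ\Eprimeone)(1)=\ch(1)=1$ because $\Eprimeone$ is the identity on $\lonecomm\ni 1$; and, being a positive form on a unital Banach $*$-algebra with isometric involution, it is automatically continuous of norm $1$. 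Since $\cstar$ is the enveloping $C^*$-algebra of $\lone$, the correspondence between states of $\lone$ and of $\cstar$ (cf.\ \cite[2.7.4 and~2.7.5]{dixmier}) shows that $\ch\circ\Eprimeone$ extends to a state $\Phi$ of $\cstar$. As $\Phi$ restricts to $\ch$ on $\lonecomm$ and $\lonecomm$ is dense in $\cstarcomm$, the restriction of $\Phi$ to $\cstarcomm$ is the continuous, hence multiplicative, extension of $\ch$, i.e.\ the character of $\cstarcomm$ identified with $\ch$ under Proposition~\ref{p:homeomorphism_of_max_ideal_spaces}.

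For $(2)\Rightarrow(1)$, fix $k\in\Z$; the cases $k=0$ and $\fixk=\emptyset$ being trivial (then $\fixkint$ equals $\topspace$ or $\emptyset$), assume $k\neq 0$ and $\fixk\neq\emptyset$. Write $\Eprimeone(\delta^k)=\sum_m g_m\delta^m\in\lonecomm$, so $\supp(g_m)\subset\fix_m(\homeo)$. Applying the observation above with the characters $\ch_{\pt}$ ($\pt\in\aperpoints$) and $\ch_{\pt,c}$ ($\pt\in\intperpoints$, $c\in\T$), and using Corollary~\ref{c:unique_extension} to identify the resulting state $\Phi$ with $\st_{\pt}$ respectively $\st_{\pt,c}$, one reads off $g_m(\pt)$ from the values $\st_{\pt}(\delta^k)=0$ and $\st_{\pt,c}(\delta^k)=c^{k/p}$ if $p\mid k$, $=0$ if $p\nmid k$, with $p$ the period of $\pt$. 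Together with Lemma~\ref{l:one} this forces $g_k$ to be continuous, to equal $1$ on $A:=\bigcup_{p\mid k,\,p\geq 1}\per_p(\homeo)^\intsymbol$, and to equal $0$ on $B:=\aperpoints\cup\bigcup_{p\nmid k}\per_p(\homeo)^\intsymbol$. The sets $A$ and $B$ are disjoint with union $\aperpoints\cup\intperpoints$, which is dense in $\topspace$ (Corollary~\ref{c:density_in_topspace}); hence $\overline A\cup\overline B=\topspace$, $g_k$ takes only the values $0$ and $1$, and $g_k$ is the indicator of the clopen set $\overline A$ (since $A\subset g_k^{-1}(1)$ and $B\subset g_k^{-1}(0)$). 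Finally $A\subset\fixkint$ gives $\overline A\subset\fixk$, so $\overline A\subset\fixkint$; and a Baire-category argument — take a neighbourhood $U\subset\fixk$ of a point of $\fixkint$, split $U$ repeatedly along the relatively closed sets $U\cap\fix_d(\homeo)$ for divisors $d$ of $k$, each time passing to an open subset lying in some $\fix_d(\homeo)$ and then to an open subset lying in some $\per_d(\homeo)$ — shows $\fixkint\subset\overline A$. Thus $\fixkint=\overline A$ is clopen, in particular closed, so $(1)$ holds.

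With the equivalence established, uniqueness is quick. If $\Eprimeone$ and $\Fprimeone$ are positive projections from $\lone$ onto $\lonecomm$ and $\ch$ runs over the family $\{\ch_{\pt}:\pt\in\aperpoints\}\cup\{\ch_{\pt,c}:\pt\in\intperpoints,\,c\in\T\}$ of Corollary~\ref{c:separating_lonecomm}, then by Corollary~\ref{c:unique_extension} the character of $\cstarcomm$ identified with $\ch$ has only one state extension to $\cstar$; hence the state $\Phi$ of the first paragraph is the same for $\Eprimeone$ and for $\Fprimeone$, so $\ch\circ\Eprimeone=\ch\circ\Fprimeone$. Since this family separates the points of $\lonecomm$, $\Eprimeone=\Fprimeone$. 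By the equivalence just proved, a positive projection exists precisely when $(1)$ holds, and then the projection furnished by Theorem~\ref{t:lone_projection}$(4)$ is the only one; in particular it equals the explicit map $\sum_k f_k\delta^k\mapsto\sum_k\chi_k f_k\delta^k$ and inherits from Theorem~\ref{t:lone_projection}$(4)$ the properties of being a faithful involutive norm one projection, a morphism of $\lonecomm$-bimodules, and of restricting to a projection from $\fs$ onto $\fscomm$.

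The main obstacle is the purely topological last step of $(2)\Rightarrow(1)$: converting the pointwise description of $g_k$ into the statement that $\fixkint$ is clopen, which amounts to the identity $\overline{\fixkint}=\overline{\bigcup_{p\mid k}\per_p(\homeo)^\intsymbol}$ proved by the Baire-category induction sketched above (alternatively, this identity should be deducible from the periodic-point results of the Appendix). The remaining ingredients — the extension of states to $\cstar$, the reduction to the single elements $\delta^k$, and the bookkeeping with the explicit characters and their GNS values — are routine given Sections~\ref{s:maxidealspace_lone} and~\ref{s:extending_and_restricting}.
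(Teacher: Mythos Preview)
Your proof is correct, and the uniqueness argument is essentially identical to the paper's: both compose a positive projection with the separating characters of Corollary~\ref{c:separating_lonecomm}, lift the resulting states of $\lone$ to states of $\cstar$, and invoke Corollary~\ref{c:unique_extension} to conclude.

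Where you diverge from the paper is the implication $(2)\Rightarrow(1)$. The paper simply asserts this is ``clear from Theorem~\ref{t:lone_projection}'', but that is not transparent: condition~(2) here does not assume the projection is a $\coeffalg$-module morphism, so none of the hypotheses (1), (2), or (4) of Theorem~\ref{t:lone_projection} is directly available from it (in the $C^*$-setting of Theorem~\ref{t:cstar_projection_unique} the projection theorem bridges this gap via positivity $\Rightarrow$ contractivity $\Rightarrow$ bimodule map, but no such device is on hand for $\lone$). Your approach instead reuses the state-extension machinery already needed for uniqueness: applying $\ch\circ\Eprimeone$ with $\ch$ ranging over the characters of Corollary~\ref{c:separating_lonecomm} and comparing with the known values $\st_{\pt}(\delta^k)$, $\st_{\pt,c}(\delta^k)$ pins down the coefficient $g_k$ of $\delta^k$ in $\Eprimeone(\delta^k)$ on the dense set $\aperpoints\cup\intperpoints$, forcing $g_k$ to be the indicator of the clopen set $\overline A$ with $A=\bigcup_{p\mid k,\,p\geq 1}\perpnewint$. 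The topological identity $\fixkint=\overline A$ you then need is exactly Proposition~\ref{p:internal_structure} applied with $S=\{|k|\}$: the inclusion $\fixkint\subset\overline A$ is the third inclusion of \eqref{e:internal_structure_one}, and the reverse follows since $A\subset\fixkint$ and $\overline A$ is open. So your Baire-category sketch can be replaced by a direct citation of the Appendix, as you yourself suspected. In effect you push the paper's uniqueness argument one step further---running it against a single positive projection rather than a pair---to recover the topological condition, which makes this direction genuinely self-contained.
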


\begin{proof}
It is clear from Theorem~\ref{t:lone_projection} that (1) implies (2), and that (2) implies (1). Just as in the proof of Theorem~\ref{t:cstar_projection_unique}, we need only show that a positive projection as in (2) is unique, because it must then be the positive projection in part (4) of Theorem~\ref{t:lone_projection}.
For this, we use a slight modification of the earlier argument. Assume that $\Eprimeone$ and $\Fprimeone$ are two projections from $\lone$ onto $\lonecomm$, positive as indicated. If $\pt\in\aperpoints$, then $\ch_{\pt}\circ\Eprimeone$ and $\ch_{\pt}\circ\Fprimeone$ are two state extensions of $\ch_{\pt}$ from $\lonecomm$ to $\lone$. These extensions, in turn, can be extended to states $\phi_{E,\pt}$ and $\phi_{F,\pt}$ on the enveloping $C^*$-algebra $\cstar$ of $\lone$. Both states $\phi_{E,\pt}$ and $\phi_{F,\pt}$ of $\cstar$ then extend the state $\ch_{\pt}$ of $\cstarcomm$. However, according to the first part of Corollary~\ref{c:unique_extension}, there is only one such state extension, hence $\phi_{E,\pt}=\phi_{F,\pt}$. We conclude from this that $\ch_{\pt}\circ\Eprimeone=\ch_{\pt}\circ\Fprimeone$, for all $\pt\in\topspace$. Likewise, it $\pt\in\intperpoints$, and $c\in\T$, then the second part of Corollary~\ref{c:unique_extension} implies that $\ch_{\pt,c}\circ\Eprimeone=\ch_{\pt,c}\circ\Fprimeone$. Corollary~\ref{c:separating_lonecomm} then shows that $\Eprimeone=\Fprimeone$.
\end{proof}

\begin{rem}
The notions of positivity for $\lone$ and $\lonecomm$ as they occur in
Theorem~\ref{t:lone_projection} and~\ref{t:lone_projection_unique}
are not the same. For $\lone$, positivity is defined in terms of a convex cone---a variation, passing to it closure, would also be natural to consider---whereas, for $\lonecomm$, positivity is defined in terms of states. For $C^*$-algebras, all these notions coincide, but for general involutive Banach algebras they need not. The relations between these notions is a subtle one, as is, e.g., attested by the material in \cite{balachandran, doran_belfi, fragoulopoulou, palmer_two}. More research is needed to determine whether more symmetric versions of Theorem~\ref{t:lone_projection} and~\ref{t:lone_projection_unique}, involving cone-to-cone positivity, or state-to-state positivity, can be established, perhaps under additional conditions on the dynamics.
\end{rem}

\appendix

\section{Topological results on the periodic points}\label{s:topological_results}

The periodic points of the homeomorphism $\homeo:\topspace\to \topspace$ play an important role in the investigation of $\cstar$ and $\lone$, and consequently various topological results concerning these points have been established, scattered over a number of papers. In this appendix, we collect the optimal version of these results as we know them, establishing non-trivial criteria for topological freeness in Corollary~\ref{c:topological_freeness} and exhibiting non-trivial dense subsets in Corollary~\ref{c:density_in_topspace}. These two results are to some extent an improvement over what is already known, and it also seemed worthwhile to collect all material in this direction in one place. Moreover, it is instructive to see how they are easily inferred from a new and rather general statement on equal closures, Proposition~\ref{p:internal_structure}. The results are actually valid when $\topspace$ is a locally compact Hausdorff space, and it is in this context that we will again use the obvious notations $\fixq$, $\perq$, $\perpoints$, and $\aperpoints$, which were previously only defined for a compact Hausdorff space $\topspace$.

As a preparation, we recall \cite[Theorem~2.2]{rudin} that the category theorem is valid for a locally compact Hausdorff space $\topspace$: the countable intersection of open dense subsets of $\topspace$ is still dense. Consequently, if $\topspace\neq\emptyset$ is the countable union of subsets, then the closure of at least one of these subsets must have a non-empty interior. Furthermore, we recall that the locally compact (in the induced topology) subspaces of a locally compact Hausdorff space $\topspace$ are precisely the sets which are the intersection of an open subset of $\topspace$ and a closed subset of $\topspace$, cf.\ \cite[I.3.3 and I.9.7]{bourbaki}.

\begin{prop}\label{p:internal_structure}
Let $\topspace$ be a locally compact Hausdorff space, and $\homeo:\topspace\to\topspace$ a homeomorphism. Suppose $S\subset\{1,2,3,\ldots\}$ is a non-empty finite or infinite set, and let $P_S=\{p\in\{1,2,3,\ldots\} : p \textup{ divides some }s\in S\}$. Then
\begin{equation}\label{e:internal_structure_one}
\bigcup_{p\in P_S}\perpnewint \subset \bigcup_{s\in S}\fixsint \subset \left(\bigcup_{s\in S}\fixs\right)^\intsymbol\subset \overline{\bigcup_{p\in P_S}\perpnewint},
\end{equation}
\begin{equation}\label{e:internal_structure_one_and_a_halve}
\bigcup_{p\in P_S}\perpnewint\subset\left(\bigcup_{p\in P_S}\perpnew\right)^\intsymbol\subset\left(\bigcup_{s\in S}\fixs\right)^\intsymbol,
\end{equation}
and
\begin{equation}\label{e:internal_structure_two}
\overline{\left(\bigcup_{p\in P_S}\perpnew\right)^\intsymbol}=
\overline{\bigcup_{p\in P_S}\perpnewint}=
\overline{\bigcup_{s\in S}\fixsint}=
\overline{\left(\bigcup_{s\in S}\fixs\right)^\intsymbol}.
\end{equation}
\end{prop}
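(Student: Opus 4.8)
The plan is to establish \eqref{e:internal_structure_one} first, then the short inclusions in \eqref{e:internal_structure_one_and_a_halve}, and finally to read off \eqref{e:internal_structure_two} formally by passing to closures in the preceding two displays. In \eqref{e:internal_structure_one} the first two inclusions are immediate: if $p\in P_S$, pick $s\in S$ with $p\mid s$, so that $\perpnew\subset\fixp\subset\fixs$ and hence $\perpnewint\subset\fixsint$, which after taking the union over $p\in P_S$ gives the first inclusion; and the second holds because each $\fixsint$ is an open subset of $\bigcup_{s\in S}\fixs$.

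The real content of \eqref{e:internal_structure_one} is the third inclusion $\bigl(\bigcup_{s\in S}\fixs\bigr)^\intsymbol\subset\overline{\bigcup_{p\in P_S}\perpnewint}$. Here I would use that $\topspace$, and every open subspace of $\topspace$, is a Baire space (being locally compact Hausdorff), together with the fact that each $\fixs$ is closed. Fix $\pt$ in the open set $W:=\bigl(\bigcup_{s\in S}\fixs\bigr)^\intsymbol$ and an arbitrary open neighbourhood $O$ of $\pt$, and set $U:=O\cap W$; this is a non-empty open set with $U\subset\bigcup_{s\in S}\fixs$. Since $S$ is countable and the $\fixs$ are closed, the category theorem applied in the Baire space $U$ yields an $s\in S$ such that $\fixs\cap U$ has non-empty interior, so there is a non-empty open $V\subset\topspace$ with $V\subset\fixs\cap U$.

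It then remains to find inside $V$ a non-empty open set contained in some $\perqint$ with $q\in P_S$, which I would extract from the following lemma, proved by induction on $s$: \emph{if $V$ is a non-empty open subset of $\fixs$, then $V$ contains a non-empty open subset of $\perqint$ for some divisor $q$ of $s$.} Indeed, $\pers\cap V=V\setminus\bigcup_{q\mid s,\,q<s}\fixq$ is the complement in $V$ of a finite union of closed sets, hence open in $\topspace$; if it is non-empty we are done with $q=s$, and otherwise $V=\bigcup_{q\mid s,\,q<s}(\fixq\cap V)$ is a finite union of closed subsets of the Baire space $V$, so some $\fixq\cap V$ with $q<s$ has non-empty interior, to which the induction hypothesis applies; the base case $s=1$ is trivial since $\fix_1(\homeo)=\per_1(\homeo)$. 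Applying this to our $V$ gives a non-empty open subset of $\perqint$ with $q\mid s\in S$, hence $q\in P_S$, and this set lies in $U\subset O$. Thus $O$ meets $\bigcup_{p\in P_S}\perpnewint$, and since $O$ was arbitrary, $\pt\in\overline{\bigcup_{p\in P_S}\perpnewint}$, which completes \eqref{e:internal_structure_one}.

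For \eqref{e:internal_structure_one_and_a_halve}, the first inclusion holds since each $\perpnewint$ is open and contained in $\bigcup_{p\in P_S}\perpnew$, and the second because $\bigcup_{p\in P_S}\perpnew\subset\bigcup_{p\in P_S}\fixp\subset\bigcup_{s\in S}\fixs$ (again $\fixp\subset\fixs$ whenever $p\mid s$) and the interior is monotone. Finally, for \eqref{e:internal_structure_two}, write $A,B,C,D$ for the four sets there, in the order listed: \eqref{e:internal_structure_one} gives $B\subset C\subset D\subset\overline{B}$, hence $\overline{B}=\overline{C}=\overline{D}$, while \eqref{e:internal_structure_one_and_a_halve} gives $B\subset A\subset D$, hence $\overline{B}\subset\overline{A}\subset\overline{D}=\overline{B}$, so all four closures coincide. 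I expect the main obstacle to be precisely the refinement lemma: since $\perpnew$ need not be closed, a single Baire argument over the sets $\perq$ would not produce an open set genuinely inside some $\perqint$, and it is the two-tier scheme---Baire over the closed sets $\fixs$, then the induction on divisors within a fixed $\fixs$---that gets around this.
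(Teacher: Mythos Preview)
Your proof is correct and follows the same overall skeleton as the paper: the easy inclusions in \eqref{e:internal_structure_one} and \eqref{e:internal_structure_one_and_a_halve} are handled exactly as in the paper, and \eqref{e:internal_structure_two} is derived formally from them in the same way. The only substantive step in both proofs is the third inclusion in \eqref{e:internal_structure_one}, and here your argument and the paper's diverge in implementation, though not in spirit.

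After the first Baire application produces a non-empty open $V\subset\fixs\cap U$, you refine to some $\perqint$ via an \emph{induction on $s$ along the divisor order}: either $V\cap\pers$ is already open and non-empty, or Baire on the finite closed cover $\{V\cap\fixq : q\mid s,\ q<s\}$ lets you descend. The paper instead uses a \emph{minimality trick}: it chooses $p_0$ to be the least element of $P_S$ for which $\textup{int}_V(V\cap\fix_{p_0}(\homeo))\neq\emptyset$, applies Baire a second time to the cover of $V\cap V'\subset\fix_{p_0}(\homeo)$ by the sets $\per_d(\homeo)$ with $d\mid p_0$, passes from $\per_d$ to $\fix_d$ via $\overline{\per_d(\homeo)}\subset\fix_d(\homeo)$, and then invokes minimality of $p_0$ to force $d_0=p_0$; the openness of $V\cap V'\cap\per_{p_0}(\homeo)$ is then read off directly from $\per_{p_0}(\homeo)=\fix_{p_0}(\homeo)\cap\bigcap_{d\mid p_0,\,d\neq p_0}\fix_d(\homeo)^c$. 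Your inductive lemma is a cleaner, reusable packaging of the descent and stays entirely within the closed sets $\fixq$; the paper's minimality argument is a one-shot device that avoids setting up an induction. Both are valid routes to the same conclusion.
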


\begin{proof}
Clearly, \eqref{e:internal_structure_two} follows immediately from \eqref{e:internal_structure_one} and \eqref{e:internal_structure_one_and_a_halve}. As to the first inclusion in \eqref{e:internal_structure_one}: if $\ptx\in V\subset\per_{p_0}^\intsymbol(\homeo)$ for some $p_0\in P_S$ with $p_0\mid s_0\in S$, and $V$ open in $\topspace$, then $\ptx\in V\subset \per_{p_0}(\homeo)\subset\fix_{s_0}(\homeo)$, so that $\ptx\in\fix_{s_0}^\intsymbol(\homeo)$. This establishes the first inclusion in \eqref{e:internal_structure_one}, and a similar argument yields the second inclusion in \eqref{e:internal_structure_one_and_a_halve}. Since the second inclusion in \eqref{e:internal_structure_one} and the first inclusion in \eqref{e:internal_structure_one_and_a_halve} are obvious, we are left with the proof of the third inclusion in \eqref{e:internal_structure_one}. While doing so, we will use the notations $\interior_A(B)$ and ${\overline{B}}^A$ to denote the interior, respectively the closure, of a set $B\subset A\subset \topspace$ in the topological space $A$ with its induced topology from $\topspace$. The interior of $B\subset \topspace$ with respect to $\topspace$ will continue to be denoted by $B^\intsymbol$, the closure in $\topspace$ by $\overline B$ and the complement in $\topspace$ by $B^c$.

We may assume that $\left(\bigcup_{s\in S}\fixs\right)^\intsymbol\neq\emptyset$. Let $\ptx\in\left(\bigcup_{s\in S}\fixs\right)^\intsymbol$ and an open neighbourhood $V$ of $x$ in $\topspace$ be given. We must show that $V\cap \bigcup_{p\in P_S}\perpnewint\neq\emptyset$. Since $\left(\bigcup_{s\in S}\fixs\right)^\intsymbol$ is open in $\topspace$, we can replace $V$ with the open neighbourhood $V\cap \left(\bigcup_{s\in S}\fixs\right)^\intsymbol$ of $x$ in $\topspace$, and hence assume that $V$ is open in $\topspace$ and that $\ptx\in V\subset \left(\bigcup_{s\in S}\fixs\right)^\intsymbol$. It will be sufficient to prove that $V\cap \bigcup_{p\in P_S}\perpnewint\neq\emptyset$ for such $V$. In that case, certainly $V\subset \bigcup_{s\in S}\fixs$, hence $V=\bigcup_{s\in S}\left(V\cap \fixs\right)$, and since $V$ is a locally compact Hausdorff space in the induced topology, there exists $s_0\in S$, such that $\interior_V\left(\overline{V\cap\fix_{s_0}(\homeo)}^V\right)\neq\emptyset$. Since $\fix_{s_0}(\homeo)$ is closed in $\topspace$, we conclude that $\interior_V\left(V\cap\fix_{s_0}(\homeo)\right)\neq\emptyset$. Since certainly $s_0\in S\subset P_S$, we thus see that it is meaningful to define $p_0$ as the smallest element of $P_S$ with the property that $\interior_V\left(V\cap\fix_{p_0}(\homeo)\right)\neq\emptyset$. Let then $V^\prime$ be an open subset of $\topspace$, such that $\emptyset\neq V\cap V^\prime\subset V\cap\fix_{p_0}(\homeo)$. Then $V\cap V^\prime=V\cap V^\prime\cap\fix_{p_0}(\homeo)=V\cap V^\prime\cap\bigcup_{d\mid p_0}\per_d(\homeo)=\bigcup_{d\mid p_0}(V\cap V^\prime\cap\per_d(\homeo))$, and by the category theorem there exists a divisor $d_0$ of $p_0$ (note that then $d_0\in P_S$, since $p_0\in P_S$) such that $\interior_{V\cap V^\prime}\left(\overline{V\cap V^\prime\cap\per_{d_0}(\homeo)}^{V\cap V^\prime}\right)\neq\emptyset$. Since $\overline{V\cap V^\prime\cap\per_{d_0}(\homeo)}^{V\cap V^\prime}\subset \overline{V\cap V^\prime\cap\fix_{d_0}(\homeo)}^{V\cap V^\prime}=V\cap V^\prime\cap\fix_{d_0}(\homeo)$, we conclude that there exists an open subset $V^{\prime\prime}$ of $\topspace$, such that $\emptyset\neq V\cap V^\prime\cap V^{\prime\prime}\subset V\cap V^\prime\cap\fix_{d_0}(\homeo)$. In particular, $V\cap V^\prime\cap V^{\prime\prime}$ is a non-empty subset of $V$, open in $V$, and contained in $V\cap\fix_{d_0}(\homeo)$. Hence $\interior_V\left(V\cap\fix_{d_0}(\homeo)\right)\neq\emptyset$, and by the minimality property of $p_0$ we conclude that $d_0=p_0$. Hence $\interior_{V\cap V^\prime}\left(\overline{V\cap V^\prime\cap\per_{p_0}(\homeo)}^{V\cap V^\prime}\right)\neq\emptyset$. In particular, $V\cap V^\prime\cap \per_{p_0}(\homeo)\neq\emptyset$. As $V\cap V^\prime\subset\fix_{p_0}(\homeo)$, and $\per_{p_0}(\homeo)=\fix_{p_0}(\homeo)\cap\bigcap_{d\mid p_0,\,d\neq p_0}\fix_d(\homeo)^c$, we have
\begin{align*}
V\cap V^\prime\cap\per_{p_0}(\homeo)&=V\cap V^\prime\cap\fix_{p_0}(\homeo)\cap\bigcap_{d\mid p_0,\,d\neq p_0}\fix_d(\homeo)^c\\
&=V\cap V^\prime\cap \bigcap_{d\mid p_0,\,d\neq p_0}\fix_d(\homeo)^c.
\end{align*}
Consequently, $V\cap V^\prime\cap\per_{p_0}(\homeo)$ is open in $\topspace$, hence $V\cap V^\prime\cap \per_{p_0}(\homeo)\subset\per_{p_0}^\intsymbol(\homeo)$. Since $V\cap (V\cap V^\prime\cap\per_{p_0}(\homeo))\neq\emptyset$, we conclude that $V\cap\per_{p_0}^\intsymbol(\homeo)\neq\emptyset$. Consequently, $V\cap \bigcup_{p\in P_S}\perpnewint\neq\emptyset$, as was to be shown.

\end{proof}

In particular, if we take $S=\{1,2,3,\ldots\}$, we see that
\begin{equation}\label{e:internal_structure_three}
\overline{\left(\bigcup_{q\geq 1} \perq\right)^\intsymbol}=
\overline{\intperpoints}=
\overline{\intfixedpoints}=
\overline{\left(\bigcup_{q\geq 1}\fixq\right)^\intsymbol}.
\end{equation}

We can now combine \eqref{e:internal_structure_three} with other known results. For example, the equivalence of the first and second part in the following result is taken from \cite[Lemma~2.1]{de_jeu_svensson_tomiyama}, and we include the brief proof for completeness of the presentation.

\begin{cor}\label{c:topological_freeness}
Let $\topspace$ be a locally compact Hausdorff space, and $\homeo:\topspace\to\topspace$ a homeomorphism. Then the following are equivalent:
\begin{enumerate}[\upshape (i)]
\item $(\topspace,\homeo)$ is topologically free, i.e., $\aperpoints$ is dense in $\topspace$.
\item $\fixq$ has empty interior, for all $q\geq 1$.
\item $\bigcup_{q\geq 1}\fixq$ has empty interior.
\item $\perq$ has empty interior, for all $q\geq 1$.
\item $\bigcup_{q\geq 1}\perq$ has empty interior.
\end{enumerate}
\end{cor}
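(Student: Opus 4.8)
The plan is to prove the equivalences by exhibiting a cycle of implications, using \eqref{e:internal_structure_three} to collapse the various "interior" conditions into each other. First I would recall that $\aperpoints=\topspace\setminus\perpoints=\topspace\setminus\bigcup_{q\geq 1}\perq$, so that $\aperpoints$ is dense in $\topspace$ precisely when $\bigcup_{q\geq 1}\perq$ has empty interior; this is the equivalence of (i) and (v), which I would treat as essentially a definition-chase. It then suffices to link (v) with (ii), (iii), and (iv).

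The key observation is that \eqref{e:internal_structure_three} (the $S=\{1,2,3,\ldots\}$ case of Proposition~\ref{p:internal_structure}) gives
\[
\overline{\left(\bigcup_{q\geq 1}\perq\right)^\intsymbol}=\overline{\intperpoints}=\overline{\intfixedpoints}=\overline{\left(\bigcup_{q\geq 1}\fixq\right)^\intsymbol},
\]
so the closures of the interiors of $\bigcup_q\perq$ and $\bigcup_q\fixq$ agree. Since a set has empty interior if and only if the closure of its interior is empty, this immediately yields (iii) $\Leftrightarrow$ (v). For the pointwise versions, I would argue: (iii) trivially implies (ii) (a subset of a set with empty interior has empty interior, and each $\fixq\subset\bigcup_{q\geq1}\fixq$), and conversely (ii) implies (iii) because $\bigcup_{q\geq1}\fixq$ is a countable union, so by the Baire category theorem (valid for locally compact Hausdorff $\topspace$, as recalled before Proposition~\ref{p:internal_structure}) if this union had non-empty interior, then some $\overline{\fixq}=\fixq$ would have non-empty interior, contradicting (ii). The same Baire argument gives (iv) $\Leftrightarrow$ (v), using that $\bigcup_{q\geq1}\perq$ is a countable union and each $\perq$ is closed... except $\perq$ is generally not closed. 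So instead, for (iv) $\Rightarrow$ (v) I would use that $\perq\subset\fixq$, hence $\bigcup_q\perq\subset\bigcup_q\fixq$, reducing to (ii); and for (v) $\Rightarrow$ (iv) I would note $\perq\subset\bigcup_{q\geq1}\perq$ directly. Actually the cleanest route is the cycle (i) $\Leftrightarrow$ (v) $\Rightarrow$ (iv) $\Rightarrow$ (ii) (since $\perq\subset\fixq$ forces $\bigcup\perq$ empty-interior from $\fixq$ all empty-interior — wait, wrong direction) — so I would simply prove (v) $\Rightarrow$ (iv) $\Rightarrow$ (ii) $\Rightarrow$ (iii) $\Rightarrow$ (v) using the containments $\perq\subset\bigcup_r\perq$, $\perq\subset\fixq$, $\fixq\subset\bigcup_r\fixq$, the Baire-category step for (ii) $\Rightarrow$ (iii), and \eqref{e:internal_structure_three} for (iii) $\Rightarrow$ (v).

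The main obstacle, such as it is, is not conceptual but bookkeeping: one must be careful that the Baire category argument is only needed for the step from the pointwise statement (ii) to the global statement (iii), and that in the opposite direction no category theorem is needed (monotonicity of "empty interior" under inclusion suffices), and that the hard geometric content — that the union of the fixed-point interiors is dense in the union of the periodic-point interiors — has already been packaged into \eqref{e:internal_structure_three}. I expect the proof to be short: perhaps half a page, essentially a diagram-chase of implications anchored on the displayed equality of closures and one invocation of Baire.
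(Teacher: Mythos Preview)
Your cycle (v) $\Rightarrow$ (iv) $\Rightarrow$ (ii) $\Rightarrow$ (iii) $\Rightarrow$ (v) has a genuine gap at the step (iv) $\Rightarrow$ (ii). The containment $\perq\subset\fixq$ that you invoke gives the implication in the \emph{wrong} direction: if $\fixq$ has empty interior then so does its subset $\perq$, i.e., (ii) $\Rightarrow$ (iv), not the other way round. (You actually catch yourself on this once in the parenthetical, but then repeat the error in your final chain.) Knowing only that each $\perq$ has empty interior does not, via mere inclusion, force $\fixq=\bigcup_{d\mid q}\per_d(\homeo)$ to have empty interior, since the $\per_d(\homeo)$ are not closed and a finite union of sets with empty interior can have non-empty interior.

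The fix is to use \eqref{e:internal_structure_three} more fully, and in fact this is exactly what the paper does. The four closures in \eqref{e:internal_structure_three} are equal, and each of (ii), (iii), (iv), (v) is equivalent to one of these closures being empty: (ii) $\Leftrightarrow\bigcup_q\fixqint=\emptyset\Leftrightarrow\overline{\bigcup_q\fixqint}=\emptyset$, and similarly for (iv); (iii) and (v) are the outer terms. So (ii)--(v) are all equivalent directly from \eqref{e:internal_structure_three}, with no separate Baire step needed (Baire is already inside Proposition~\ref{p:internal_structure}). The paper then links this block to (i) via (i) $\Leftrightarrow$ (ii) using Baire on $\aperpoints=\bigcap_q\fixq^c$. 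Your route (i) $\Leftrightarrow$ (v) via $\aperpoints=\topspace\setminus\bigcup_q\perq$ is arguably cleaner, since it avoids a second invocation of the category theorem; but either way, the step you are missing is to read off (iv) $\Leftrightarrow$ (ii) from the two middle terms of \eqref{e:internal_structure_three} rather than from a set inclusion.
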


\begin{proof}
For the equivalence of the first and second part, note that $\aperpoints=\bigcap_{q=1}^\infty \fixq^c$. Since the sets $\fixq^c$ are all open, the category theorem yields that $\aperpoints$ is dense if and only if $\fixq^c$ is dense, for all $q\geq 1$, i.e., if and only if $\fixqint=\emptyset$, for all $q\geq 1$. The equivalence of the second, third, fourth and fifth part is immediate from \eqref{e:internal_structure_three}.
\end{proof}

The first part of the next result is also from \cite[Lemma~2.1]{de_jeu_svensson_tomiyama}, and we include the short proof again for the sake of completeness. The third part was established independently in \cite[Lemma~2.1]{svensson_tomiyama}, but will now be seen to be an immediate consequence of the first part and \eqref{e:internal_structure_three}.
\begin{cor}\label{c:density_in_topspace}
Let $\topspace$ be a locally compact Hausdorff space, and $\homeo:\topspace\to\topspace$ a homeomorphism. Then:
\begin{enumerate}[\upshape (i)]
\item The union of $\aperpoints$ and $\intfixedpoints$ is dense in $\topspace$.
\item The union of $\aperpoints$ and $\left(\bigcup_{q\geq 1} \fixq\right)^\intsymbol$ is dense in $\topspace$.
\item The union of $\aperpoints$ and $\intperpoints$ is dense in $\topspace$.
\item The union of $\aperpoints$ and $\left(\bigcup_{q\geq 1} \perq\right)^\intsymbol$ is dense in $\topspace$.
\end{enumerate}
\end{cor}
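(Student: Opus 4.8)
The plan is to derive all four density statements from the category theorem together with the equal-closure result \eqref{e:internal_structure_three}. I would begin with part~(i), which is the genuine input: the other three parts are then formal consequences. For~(i), write $Y$ for the closure of $\aperpoints\cup\intfixedpoints$ in $\topspace$ and suppose, for contradiction, that $Y\neq\topspace$. Then $U:=\topspace\setminus Y$ is a non-empty open subset of $\topspace$ which, in its induced topology, is again a locally compact Hausdorff space. Since $U$ contains no aperiodic points, we have $U=\bigcup_{q\geq 1}(U\cap\fixq)$, a countable union of closed subsets of $U$. By the category theorem applied to $U$, some $U\cap\fix_{q_0}(\homeo)$ has non-empty interior in $U$; since $U$ is open in $\topspace$ and $\fix_{q_0}(\homeo)$ is closed, this interior is open in $\topspace$, so $\fix_{q_0}^\intsymbol(\homeo)\cap U\neq\emptyset$. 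But $\fix_{q_0}^\intsymbol(\homeo)\subset\intfixedpoints\subset Y$, contradicting $U\cap Y=\emptyset$. Hence $Y=\topspace$, proving~(i).

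For the remaining parts I would simply enlarge the second set in each union while keeping the closure unchanged. Part~(iii): since $\aperpoints\cup\intfixedpoints\subset\aperpoints\cup\intperpoints$, and by \eqref{e:internal_structure_three} we have $\overline{\intfixedpoints}=\overline{\intperpoints}$, it follows that
\[
\topspace=\overline{\aperpoints\cup\intfixedpoints}\subset\overline{\aperpoints\cup\intperpoints}=\overline{\aperpoints}\cup\overline{\intperpoints}=\overline{\aperpoints}\cup\overline{\intfixedpoints}\subset\topspace,
\]
so equality holds throughout and $\aperpoints\cup\intperpoints$ is dense. Parts~(ii) and~(iv) are handled the same way, using the two further equalities in \eqref{e:internal_structure_three}, namely $\overline{\intfixedpoints}=\overline{\left(\bigcup_{q\geq 1}\fixq\right)^\intsymbol}$ for~(ii) and $\overline{\intperpoints}=\overline{\left(\bigcup_{q\geq 1}\perq\right)^\intsymbol}$ for~(iv); in each case the set in question contains $\aperpoints\cup\intfixedpoints$ and is contained in its closure.

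I do not anticipate a serious obstacle here. The only substantive step is part~(i), and even there the argument is the standard Baire-category trick: the one point requiring care is that passing to the open subset $U$ preserves local compactness and Hausdorffness, which was already recalled in the preliminaries to the Appendix. Everything else is bookkeeping with the chain of equalities \eqref{e:internal_structure_three}, which is available to us. One could alternatively phrase part~(i) directly via Corollary~\ref{c:topological_freeness} applied to a suitable subsystem, but the direct Baire argument on $U$ is cleaner and self-contained.
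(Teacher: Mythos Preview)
Your argument for part~(i) is essentially identical to the paper's: both take the open complement of the closure, apply Baire to the cover by the closed sets $\fixq$, and observe that the resulting non-empty $\topspace$-open set lands inside some $\fix_{q_0}^\intsymbol(\homeo)$, a contradiction. The paper likewise deduces (ii)--(iv) immediately from (i) and \eqref{e:internal_structure_three}, without further detail.

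One slip in your derivation of (iii): the inclusion $\aperpoints\cup\intfixedpoints\subset\aperpoints\cup\intperpoints$ is backwards, since $\perq\subset\fixq$ gives $\perqint\subset\fixqint$ and hence $\intperpoints\subset\intfixedpoints$. Fortunately you do not need any inclusion at all: using $\overline{A\cup B}=\overline{A}\cup\overline{B}$ and \eqref{e:internal_structure_three} directly gives
\[
\overline{\aperpoints\cup\intperpoints}=\overline{\aperpoints}\cup\overline{\intperpoints}=\overline{\aperpoints}\cup\overline{\intfixedpoints}=\overline{\aperpoints\cup\intfixedpoints}=\topspace,
\]
and similarly for (ii) and (iv). So the strategy is fine; just drop the erroneous containment.
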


\begin{proof}
We start with the first part and use the same notational conventions as in the proof of Proposition~\ref{p:internal_structure}.
\[
Y=\overline{\aperpoints\cup\intfixedpoints}^{\,\,c}.
\]
Suppose that $Y\neq\emptyset$. Since $Y\subset\perpoints$, $Y=\bigcup_{q\geq 1} Y\cap\fixq$. The category theorem shows that there exists $q_0\geq 1$, such that $\interior_Y(\overline{Y\cap\fix_{q_0}(\homeo)}^Y)\neq\emptyset$. Since $\fix_{q_0}(\homeo)$ is closed in $\topspace$, we see that $\interior_Y(Y\cap\fix_{q_0}(\homeo))\neq\emptyset$. Let $V$ be an open subset of $\topspace$, such that $\emptyset\neq V\cap Y\subset Y\cap\fix_{q_0}(\homeo)$. In particular, $V\cap Y\subset \fix_{q_0}(\homeo)$, and since $Y$ is open in $\topspace$, this shows that $V\cap Y\subset\fix_{q_0}^\intsymbol(\homeo)$. Since $Y\cap(V\cap Y)\neq\emptyset$, we conclude that $Y\cap\fix_{q_0}^\intsymbol(\homeo)\neq\emptyset$, which contradicts that $Y\cap\fixqint=\emptyset$ for all $q\geq 1$ by construction.

The second, third and fourth part are immediate from the first part and \eqref{e:internal_structure_three}.
\end{proof}

\subsection*{Acknowledgements}
This work was supported by a visitor's grant of the Netherlands Organisation for Scientific Research (NWO).


\begin{thebibliography}{99}

\bibitem{balachandran}
V.K.\ Balachandran, \emph{Involutive topological algebras}, University of Madras, Chennai, 2007.

\bibitem{blackadar}
B.J.\ Blackadar, \emph{Operator algebras}, Encyclopaedia of Mathematical Sciences, Vol.\ 122, Springer-Verlag, 2006.

\bibitem{bourbaki}
N.\ Bourbaki, \emph{\'El\'ements de math\'ematique. Topologie g\'en\'erale, Chapitres 1 \`a 4}, Hermann, Paris, 1971.

\bibitem{dixmier}
J.\ Dixmier, \emph{{$C\sp*$}-algebras}, North-Holland Mathematical Library, Vol.\ 15, North-Holland Publishing Co., Amsterdam, 1977.

\bibitem{doran_belfi}
R.S.\ Doran and V.A.\ Belfi, \emph{Characterizations of {$C^*$}-algebras. The Gel{$'$}fand Na{\u\i}mark theorems}, Monographs and Textbooks in Pure and Applied Mathematics, Vol.\ 101, Marcel Dekker Inc., New York, 1986.

\bibitem{fragoulopoulou}
M.\ Fragoulopoulou, \emph{Topological algebras with involution}, North-Holland Mathematics Studies, Vol.\ 200, Elsevier Science B.V., Amsterdam, 2005.

\bibitem{de_jeu_svensson_tomiyama}
M.\ de Jeu, C.\ Svensson and J.\ Tomiyama, \emph{On the Banach $^*$-algebra crossed product associated with a topological dynamical system}, to appear, arXiv:0902.0690.

\bibitem{palmer_two}
Th.W.\ Palmer, \emph{Banach algebras and the general theory of {$*$}-algebras. Vol.\ 2, $*$-algebras}, Encyclopedia of Mathematics and its Applications, Vol.\ 79, Cambridge University Press, Cambridge, 2001.

\bibitem{rudin}
W.\ Rudin, \emph{Functional analysis}, 2nd ed., McGraw-Hill Inc., New York, 1991.

\bibitem{svensson_tomiyama}
C.\ Svensson and J.\ Tomiyama, \emph{On the commutant of {$C(X)$} in {$C^*$}-crossed products by {$\Bbb Z$} and their representations}, J.\ Funct.\ Anal. 256 (2009), 2367--2386.
		
\bibitem{tomiyama_projection_theorem}
J.\ Tomiyama, \emph{On the projection of norm one in {$W^{\ast} $}-algebras}, Proc.\ Japan Acad. 33 (1957), 608--612.

\bibitem{tomiyama_book}
J.\ Tomiyama, \emph{Invitation to {$C^*$}-algebras and topological dynamics}, World Scientific Advanced Series in Dynamical Systems, Vol.\ 3, World Scientific Publishing Co., Singapore, 1987.

\bibitem{tomiyama_notes_one}
J.\ Tomiyama, \emph{The interplay between topological dynamics and theory of {$C^*$}-algebras}, Lecture Notes Series, Vol.\ 2, Seoul National University Research Institute of Mathematics Global Analysis Research Center, Seoul, 1992.

\bibitem{tomiyama_notes_two}
J.\ Tomiyama, \emph{The interplay between topological dynamics and theory of {$C^*$}-algebras. {II}}, Research publication notes from the Research Inst.\ of Math.\ Sci.\ of Kyoto University, Vol.\ 1151, Kyoto University, Kyoto, 2000.

\bibitem{tomiyama_structure}
J.\ Tomiyama, \emph{Structure of ideals and isomorphisms of {$C^\ast$}-crossed products by single homeomorphism}, Tokyo J.\ Math. 23 (2000), 1--13.
		
\bibitem{williams}
D.P.\ Williams, \emph{Crossed products of $C{^\ast}$-algebras}, Mathematical Surveys and Monographs, Vol.\ 134, American Mathematical Society, Providence, RI, 2007.

\end{thebibliography}
\end{document}